\documentclass[reqno,12pt]{amsart}
\usepackage{amsmath,amssymb,latexsym,soul,cite,mathrsfs}
\usepackage{xcolor,enumitem,graphicx}
\usepackage[colorlinks=true,urlcolor=blue,
citecolor=red,linkcolor=blue,linktocpage,pdfpagelabels,
bookmarksnumbered,bookmarksopen]{hyperref}
\usepackage[english]{babel}
\usepackage[a4paper,margin=1in]{geometry}

\usepackage[T5,T1]{fontenc}

\newtheorem{theoremletter}{Theorem}

\newtheorem{theo}{Theorem}[section]

\newtheorem{remark}{Remark}[section]

\newtheorem{lemma}{Lemma}[section]

\newtheorem{prop}{Proposition}[section]

\newtheorem{cor}{Corollary}[section]

\numberwithin{equation}{section}

\title[P\'olya--Szeg\"o on submanifolds]{P\'olya--Szeg\"o Inequality on Submanifolds of Riemannian Manifolds with Nonnegative Curvature and Applications}

\author[A. Do]{Anh Xuan Do}
\author[N. Lam]{Nguyen Lam}
\author[G. Lu]{Guozhen Lu}
\author[R. Ponciano]{Raon\'{\i} Ponciano}

\address[Anh Xuan Do]{Dep. Mathematics, University of Connecticut
	\newline\indent
	06269, Storrs-CT, United States of America}
\email{\href{mailto:anh.do@uconn.edu}{anh.do@uconn.edu}}

\address[Nguyen Lam]{School of Science and the Environment, Memorial University of Newfoundland
\newline\indent
A2H5G4, Corner Brook-NL, Canada}
\email{\href{mailto:nlam@mun.ca}{nlam@mun.ca}}

\address[Guozhen Lu]{Dep. Mathematics, University of Connecticut
	\newline\indent
	06269, Storrs-CT, United States of America}
\email{\href{mailto:guozhen.lu@uconn.edu}{guozhen.lu@uconn.edu}}

\address[Raon\'i Ponciano]{Department of Mathematics,
	Federal University of ABC
	\newline\indent
	09280-560, Santo Andr\'e-SP, Brazil}
\email{\href{mailto:raoni.ponciano@ufabc.edu.br}{raoni.ponciano@ufabc.edu.br}}

\subjclass[2020]{46E35, 46E30, 49Q15, 26D10, 35B33}
\keywords{P\'olya--Szeg\"o inequality, symmetrization, submanifolds, Sobolev inequality, Log-Sobolev inequality, Hardy inequality, Moser--Trudinger inequality, Exact Growth inequality.}

\begin{document}
	\begin{abstract}		
    We prove a P\'olya--Szeg\"o inequality for functions defined on an $n$-dimensional submanifold $\Sigma$ of a complete noncompact Riemannian manifold with nonnegative sectional curvature. The associated rearrangement is a Schwarz rearrangement on $\mathbb R^n$, and the constant depends on the $L^n$-norm of the mean curvature of $\Sigma$ and an isoperimetric quantity obtained by Brendle. As applications, we derive Sobolev, Log-Sobolev, Hardy, and Gagliardo--Nirenberg inequalities on submanifolds of arbitrary codimension under a small total mean curvature assumption. In the critical Sobolev case, we obtain Moser--Trudinger inequalities on finite-volume submanifolds and exact growth inequalities on submanifolds with infinite volume. Under suitable assumptions, the P\'olya--Szeg\"o constant equals one; in this case, the critical constants in the inequalities coincide with the sharp Euclidean ones.
\end{abstract}
	\maketitle	
	\begin{center}
		\footnotesize
		\tableofcontents
	\end{center}

\section{Introduction}

The P\'olya--Szeg\"o inequality is one of the fundamental tools connecting symmetrization, the isoperimetric problem, and functional inequalities. In its classical Euclidean form, decreasing rearrangement does not increase the Dirichlet integral, and this fact provides a sharp Sobolev-type estimate; see the monograph of P\'olya and Szeg\"o \cite{MR43486} and Talenti's rearrangement proof of the sharp Sobolev inequality \cite{MR601601}. This has become a standard tool in the analysis of elliptic equations, geometric inequalities, and Sobolev-type embeddings. The present paper aims to extend this mechanism to functions on submanifolds by applying Brendle's Sobolev inequality of Michael-Simon type for submanifolds of manifolds with nonnegative sectional curvature \cite{MR4612577, MS73}.

Roughly, our main arguments for our proofs are as follows. We first apply Brendle's inequality to obtain an isoperimetric inequality on the submanifold $\Sigma$; see Proposition~\ref{propIP}. In this inequality, the isoperimetric constant depends on $\|H\|_{L^n(\Sigma)}$, the $L^n$-norm of the mean curvature vector. This estimate is then used in the coarea-based proof of the P\'olya--Szeg\"o inequality, yielding a P\'olya--Szeg\"o principle for submanifolds; see Theorem~\ref{MainPS}. The resulting estimate compares the tangential gradient on $\Sigma$ with the Euclidean gradient of the Schwarz rearrangement in $\mathbb R^n$. This comparison allows several sharp Euclidean inequalities on $\mathbb R^n$ to be transferred to $\Sigma$, up to the same explicit P\'olya--Szeg\"o constant. Related rearrangement results have been established on manifolds with nonnegative Ricci curvature \cite[Proposition 3.1]{MR4566705}, Carnot Groups \cite{MR3961339}, model manifolds \cite{muratori2025concentrationcomparisonnonlineardiffusion}, $CD(K,N)$ spaces \cite{MR4088507,MR4970248,MR3608721}, and Euclidean submanifolds \cite{MR4977133}.

The main contributions of this paper are: (i) we establish the P\'olya–Szeg\"o inequality for submanifolds of an arbitrary complete noncompact Riemannian manifold with nonnegative sectional curvature, with an explicit constant $C_{\mathrm{PS}}$; and (ii) we derive several applications showing how Euclidean inequalities can be transported into inequalities on such submanifolds. Specifically, to highlight the many applications our theorem admits, we prove Sobolev, log-Sobolev, Gagliardo–Nirenberg and  Hardy inequalities, critical and subcritical Moser-Trudinger inequalities and their analogues  with exact growth, etc.

In this paper, we study symmetrization of functions defined on a submanifold $\Sigma$ of a Riemannian manifold $(M,g)$. Throughout, $M$ denotes a manifold of dimension $n+m$, while $\Sigma\subset M$ is an $n$-dimensional submanifold. Consequently, $n$ always denotes the dimension of $\Sigma$, and $m$ its codimension; throughout the paper we assume $n\geq2$ and $m\geq1$. We also denote $H$ as the mean curvature vector of $\Sigma$. Moreover, $\Sigma$ is endowed with the natural measure $\mathcal H^n_g$, namely the $n$-dimensional Hausdorff measure associated with the metric $g$. In the particular case where $\Sigma$ is a submanifold of the Euclidean space $(\mathbb R^n,\delta)$, we write $\mathcal H^n_\delta$ for the corresponding Hausdorff measure, where $\delta=\delta_{ij}\mathrm dx^i\otimes\mathrm dx^j$ is the Euclidean Riemannian metric. When $m=0$, this coincides with the Lebesgue measure, that is, $\mathcal L^n=\mathcal H^n_\delta$. We denote by $|B^n|:=\mathcal L^n(B^n)$ the $n$-dimensional volume of the unit ball $B^n\subset\mathbb R^n$ and $\omega_{n-1}:=\mathcal H^{n-1}_\delta(\mathbb S^{n-1})$ the $(n-1)$-dimensional volume of the unit sphere $\mathbb S^{n-1}\subset\mathbb R^{n}$. 

Let $M$ be a complete, noncompact manifold of dimension $n+m$ with nonnegative Ricci curvature. The asymptotic volume ratio of $M$ is defined by
\begin{equation*}
\mathrm{AVR}_g:=\lim _{r \rightarrow \infty} \frac{\mathcal H_g^{n+m}(\{p \in M: d_M(p,q)<r\})}{\left|B^{n+m}\right| r^{n+m}},
\end{equation*}
where $d_M(\cdot,\cdot)$ denotes the distance in $M$ induced by the Riemannian metric $g$ and $q\in M$ is a fixed point (the limit does not depend on the choice of $q$). By the Bishop-Gromov volume comparison theorem, one has $\mathrm{AVR}_g\leq1$.

The following theorem was established by Brendle. It was originally formulated only for $m\geq2$, but the analogous result holds for $m=1$. This is because we can always increase the codimension by considering another manifold $\widetilde\Sigma=\Sigma\times\{0\}\subset M\times\mathbb R=\widetilde M$, which does not affect the value of $\mathrm{AVR}_g$. Indeed, writing $\widetilde g=g\oplus\mathrm dt^2$ for the product metric on $\widetilde M=M\times\mathbb R$, geodesic balls in $\widetilde M$ decompose as $\mathcal H^{n+2}_{\widetilde g}(B_r^{\widetilde M})=\int_{-r}^r\mathcal H^{n+1}_g(B^M_{\sqrt{r^2-t^2}})\mathrm dt$ and since $|B^{n+2}|=|B^{n+1}|\int_{-1}^1(1-s^2)^{\frac{n+1}{2}}\mathrm ds$, dividing by $r^{n+2}$ and letting $r\to\infty$ gives $\mathrm{AVR}_{\widetilde g}=\mathrm{AVR}_g$. Note also that $\widetilde\Sigma$ has the same mean curvature vector as $\Sigma$, and that $\widetilde M$ still has nonnegative sectional curvature.

\begin{theoremletter}[See {\cite[Theorem 1.4]{MR4612577}}]\label{theobrendle}
Let $M$ be a complete noncompact manifold of dimension $n+m$ with nonnegative sectional curvature. Let $\Sigma$ be a compact submanifold of $M$ of dimension $n\geq2$ (possibly with boundary $\partial\Sigma$), and let $u$ be a positive smooth function on $\Sigma$. If $m \geq 1$, then
\begin{equation*}
\int_{\Sigma} \sqrt{\left|\nabla^{\Sigma} u\right|^{2}+u^{2}|H|^{2}}\mathrm d\mathcal H^n_g+\int_{\partial \Sigma} u\mathrm d\mathcal H^{n-1}_g \geq C_{\mathrm{Bre}}\left(\int_{\Sigma} u^{\frac{n}{n-1}}\mathrm d\mathcal H^n_g\right)^{\frac{n-1}{n}},
\end{equation*}
where $C_{\mathrm{Bre}}$ denotes the Brendle's constant given by
\begin{equation*}
C_{\mathrm{Bre}}=C_{\mathrm{Bre}}\left(n,m,\mathrm{AVR}_g^{\frac1n}\right)=\left\{\begin{array}{ll}
n\mathrm{AVR}_g^{\frac1n}|B^n|^{\frac{1}{n}}&\mbox{if }m=1,2,\\
n\mathrm{AVR}_g^{\frac{1}{n}}\left(\frac{(n+m)\left|B^{n+m}\right|}{m\left|B^{m}\right|}\right)^{\frac{1}{n}}&\mbox{if }m\geq3.\end{array}\right.
\end{equation*}
\end{theoremletter}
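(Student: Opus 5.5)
The statement is Brendle's theorem \cite[Theorem 1.4]{MR4612577}, which is proved there for $m\geq2$. The only new content is the case $m=1$, and there I would reduce to $m=2$ using the product construction described just before the statement. I would not re-derive Brendle's ABP-type argument.

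Step 1 sets up the product. Given $\Sigma^n\subset M^{n+1}$ with $m=1$, I set $\widetilde M=M\times\mathbb R$ with $\widetilde g=g\oplus\mathrm dt^2$ and $\widetilde\Sigma=\Sigma\times\{0\}$. Then $\widetilde M$ is complete and noncompact. Its sectional curvature is nonnegative: for a product metric, $\widetilde R(X,Y,Y,X)=R(X_M,Y_M,Y_M,X_M)$, since the $\mathbb R$ factor is flat and the product has no mixed curvature terms. Moreover $\widetilde\Sigma$ is a compact $n$-dimensional submanifold of codimension $2$, with boundary $\partial\Sigma\times\{0\}$.

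Step 2 checks that every quantity in the inequality is unchanged. The slice $M\times\{0\}$ is totally geodesic in $\widetilde M$. Hence the second fundamental form of $\widetilde\Sigma$ in $\widetilde M$ equals that of $\Sigma$ in $M$, and so $\widetilde H=H$. The induced metrics agree, so $\mathcal H^n_{\widetilde g}$ on $\widetilde\Sigma$ coincides with $\mathcal H^n_g$ on $\Sigma$, and likewise for the boundary measures. Setting $\widetilde u(x,0)=u(x)$ gives $\nabla^{\widetilde\Sigma}\widetilde u=\nabla^\Sigma u$. Finally, $\mathrm{AVR}_{\widetilde g}=\mathrm{AVR}_g$ by the computation recorded before the statement. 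For that computation, Fubini together with the fact that distance in the product satisfies $d^2=d_M^2+t^2$ gives the slice decomposition of the balls. Dividing by $r^{n+2}$ and substituting $t=rs$, I pass to the limit by dominated convergence, using the Bishop--Gromov bound $\mathcal H^{n+1}_g(B_\rho)\leq |B^{n+1}|\rho^{n+1}$.

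Step 3 applies Brendle's theorem to $\widetilde\Sigma\subset\widetilde M$ with codimension $2$. This yields the inequality with constant $n\,\mathrm{AVR}_{\widetilde g}^{1/n}|B^n|^{1/n}=n\,\mathrm{AVR}_g^{1/n}|B^n|^{1/n}$, which is exactly the $m=1$ entry of $C_{\mathrm{Bre}}$. For $m\geq3$ and for $m=2$ nothing needs to be done beyond quoting \cite{MR4612577}. The only point needing care is the limit interchange in the AVR computation, and this is handled by the Bishop--Gromov domination just described.
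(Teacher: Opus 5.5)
Your proposal is correct and is essentially the paper's own argument: reduce $m=1$ to $m=2$ via $\widetilde\Sigma=\Sigma\times\{0\}\subset M\times\mathbb R$. The product has nonnegative sectional curvature, $\widetilde\Sigma$ has the same mean curvature vector, and the asymptotic volume ratio is unchanged by the slice decomposition of balls. Your additional details, namely the totally geodesic slice giving $\widetilde H=H$ and dominated convergence with the Bishop--Gromov bound justifying the limit in the $\mathrm{AVR}$ computation, fill in steps the paper only asserts.
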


As a consequence of the previous inequality, we first establish an isoperimetric inequality for subsets $\Omega\subset\Sigma$; see Proposition~\ref{propIP}. More precisely, we prove that
\begin{equation*}
\left(\mathcal H^n_g(\Omega)\right)^{\frac{n-1}n}\leq C\mathcal H^{n-1}_g(\partial\Omega)
\end{equation*}
This estimate is then used to prove our first main result: a P\'olya--Szeg\"o inequality for submanifolds $\Sigma\subset M$. We emphasize that, in our setting, the Schwarz symmetrization is defined on $\mathbb R^n$, rather than on $\Sigma$ itself. The precise definition of this symmetrization is given in Section~\ref{sec2}.
\begin{theo}[P\'olya--Szeg\"o inequality for submanifolds]
Let $M$ be a complete noncompact Riemannian manifold of dimension $n+m$ with nonnegative sectional curvature and $m\geq1$. Let $\Sigma$ be a submanifold of $M$ of dimension $n\geq2$ (possibly with boundary) satisfying $\|H\|_{L^n(\Sigma)}<C_{\mathrm{Bre}}$. If $u\in W^{1,p}_0(\Sigma)$ for some $1\leq p<\infty$ and $u^*\colon\mathbb R^n\to[0,\infty)$ is its Schwarz rearrangement, then $u^*\in W^{1,p}(\mathbb R^n;[0,\infty))$ and\label{MainPS}

\begin{equation*}
\|\nabla u^*\|_{L^p(\mathbb R^n)}\leq C_{\mathrm{PS}}\|\nabla^\Sigma u\|_{L^p(\Sigma)},
\end{equation*}
where $C_{\mathrm{PS}}$ denotes the P\'olya--Szeg\"o constant given by
\begin{equation}\label{CPS}
C_{\mathrm{PS}}\!=\!C_{\mathrm{PS}}(n,m,\mathrm{AVR}_g,\|H\|_{L^n(\Sigma)})=\!\left\{\begin{array}{ll}
\!\dfrac{n|B^n|^{\frac1n}}{n\mathrm{AVR}_g^{\frac1n}|B^n|^{\frac1n}-\|H\|_{L^n(\Sigma)}}&\mbox{if }m=1,2,\\
\!\dfrac{n|B^n|^{\frac1n}}{n\mathrm{AVR}_g^{\frac{1}{n}}\left(\frac{(n+m)|B^{n+m}|}{m|B^m|}\right)^{\frac1n}-\|H\|_{L^n(\Sigma)}}&\mbox{if }m\geq3.
\end{array}\right.
\end{equation}
\end{theo}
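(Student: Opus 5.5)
We follow the classical Talenti coarea argument, with the Euclidean isoperimetric inequality replaced by the isoperimetric inequality on $\Sigma$ coming from Theorem~\ref{theobrendle}.

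\textbf{Step 1: isoperimetric inequality.} Apply Theorem~\ref{theobrendle} with $u\equiv 1$ (approximated by positive smooth functions) on a smooth compact domain $\Omega\subset\Sigma$. Using $\sqrt{|\nabla^\Sigma u|^2+u^2|H|^2}=|H|$ and Hölder,
\begin{equation*}
\int_\Omega |H|\,\mathrm d\mathcal H^n_g\le \|H\|_{L^n(\Sigma)}\,\mathcal H^n_g(\Omega)^{\frac{n-1}{n}},
\end{equation*}
so
\begin{equation*}
\bigl(C_{\mathrm{Bre}}-\|H\|_{L^n(\Sigma)}\bigr)\,\mathcal H^n_g(\Omega)^{\frac{n-1}{n}}\le \mathcal H^{n-1}_g(\partial\Omega).
\end{equation*}
Since $n|B^n|^{1/n}\,\mathcal L^n(E)^{\frac{n-1}{n}}=\mathcal H^{n-1}_\delta(\partial E)$ for balls $E\subset\mathbb R^n$, this gives $P_{\mathbb R^n}(\{u^*>t\})\le C_{\mathrm{PS}}\,\mathcal H^{n-1}_g(\{u=t\})$ for the equimeasurable level sets. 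The hypothesis $\|H\|_{L^n}<C_{\mathrm{Bre}}$ ensures that the constant is positive.

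\textbf{Step 2: reduction.} By density and lower semicontinuity, reduce to $u\ge 0$ (replacing $u$ by $|u|$) smooth with compact support in $\Sigma$. By Sard's theorem, almost every level set $\{u=t\}$ is then a smooth hypersurface bounding the compact domain $\{u\ge t\}$; if the support reaches $\partial\Sigma$, the $W^{1,p}_0$ condition makes $u$ vanish there. Set $\mu(t)=\mathcal H^n_g(\{u>t\})$. The profile $u^*$ is radial and decreasing, and first we show it is locally absolutely continuous via the standard argument. Using the coarea formula,
\begin{equation*}
-\mu'(t)\ge\int_{\{u=t\}}|\nabla^\Sigma u|^{-1}\,\mathrm d\mathcal H^{n-1}_g,
\end{equation*}
and on $\{u^*=t\}$ one has $|\nabla u^*|=P(t)/(-\mu'(t))$, where $P(t)=n|B^n|^{1/n}\mu(t)^{\frac{n-1}{n}}$.

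\textbf{Step 3: Talenti's estimate.} By coarea, $\int_\Sigma|\nabla^\Sigma u|^p=\int_0^\infty\int_{\{u=t\}}|\nabla^\Sigma u|^{p-1}\,\mathrm dt$. Hölder gives
\begin{equation*}
\mathcal H^{n-1}_g(\{u=t\})\le\Bigl(\int_{\{u=t\}}|\nabla^\Sigma u|^{p-1}\Bigr)^{\frac1p}\Bigl(\int_{\{u=t\}}|\nabla^\Sigma u|^{-1}\Bigr)^{\frac{p-1}{p}},
\end{equation*}
hence
\begin{equation*}
\int_{\{u=t\}}|\nabla^\Sigma u|^{p-1}\ge \mathcal H^{n-1}_g(\{u=t\})^p\,(-\mu'(t))^{1-p}\ge C_{\mathrm{PS}}^{-p}P(t)^p(-\mu'(t))^{1-p}.
\end{equation*}
The right-hand side equals $C_{\mathrm{PS}}^{-p}\int_{\{u^*=t\}}|\nabla u^*|^{p-1}$. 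Integrating in $t$ yields the claim for $p>1$; for $p=1$ the Hölder step is unnecessary.

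\textbf{Main obstacle.} The delicate point is the set where $\nabla^\Sigma u=0$, which may have positive measure so that $\mu$ has a singular part. The standard remedy, which we will adopt, handles this in two ways. First, it uses only the absolutely continuous part of $\mu'$, giving the inequality above. Second, it proves $u^*\in W^{1,p}$ by showing that $u^*$ has no jumps and that its distributional gradient is given by the formula above. Alternatively, one can restrict to Morse-type approximations whose critical sets are null and pass to the limit using the weak lower semicontinuity of $\|\nabla u^*\|_{L^p}$ and the $L^p$-continuity of rearrangement. We would try the approximation route first, since it cleanly covers general $W^{1,p}_0$ functions.
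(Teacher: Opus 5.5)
Your proposal is correct and follows the paper's proof essentially step by step. Both obtain the isoperimetric inequality with constant $(C_{\mathrm{Bre}}-\|H\|_{L^n(\Sigma)})^{-1}$ from Theorem~\ref{theobrendle} with $u\equiv1$ and H\"older (Proposition~\ref{propIP}), and then run Talenti's coarea--H\"older argument for smooth, nonnegative, compactly supported $u$, using Sard's theorem and equimeasurability to compare $\{u=t\}$ with the sphere $\{u^*=t\}$. The only cosmetic difference is that the paper derives the equality $-\mu'(t)=\int_{\{u=t\}}|\nabla^\Sigma u|^{-1}\,\mathrm d\mathcal H^{n-1}_g$ a.e. via Sard, whereas you use only the inequality $\geq$, which is all the H\"older step requires.
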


\begin{remark}
Assuming that $u$ attains equality in the P\'olya--Szeg\"o inequality, then its level sets necessarily attain equality in the corresponding isoperimetric inequality. In the case of codimension one or two, this implies that $M=\mathbb R^{n+m}$ and $\Sigma$ is a flat ball, possibly with infinite radius. This restriction on the codimension is required, since the characterization of equality cases in the isoperimetric inequality is currently available only in this setting, as established in \cite[Theorem 1.6]{MR4612577}. For more details, we refer to Remark~\ref{remarkequality}.
\end{remark}

\begin{remark}
Let us analyze the assumption $\|H\|_{L^n(\Sigma)}<C_{\mathrm{Bre}}$. If this condition is removed from Theorem \ref{MainPS}, the same arguments used in the proof yields
\begin{equation*}
\|\nabla^\Sigma u\|_{L^p(\Sigma)}^p\geq\left(\dfrac{C_{\mathrm{Bre}}}{n|B^n|^{\frac1n}}\right)^p\int_{S(u)}\int_{\{u^*=s\}}|\nabla u^*|^{p-1}\mathrm d\mathcal H^{n-1}_\delta\left[1-\dfrac{n|B^n|^{\frac1n}\int_{\{u>s\}}|H|\mathrm d\mathcal H^n_g}{C_{\mathrm{Bre}}\mathcal H^{n-1}_\delta(\{u^*=s\})}\right]^p\mathrm ds,
\end{equation*}
where $S(u)$ denotes the set of all $s\in\mathbb R$ such that
\begin{equation*}
n|B^n|^{\frac{1}{n}}\int_{\{u>s\}}|H|\mathrm d\mathcal H^n_g<C_{\mathrm{Bre}}\mathcal H^{n-1}_\delta(\{u^*=s\}).
\end{equation*}
Note that if the mean curvature is sufficiently large, it may happen that $S(u)=\emptyset$. In that case, the right-hand side vanishes, and the inequality reduces to the trivial one: $\|\nabla^\Sigma u\|_{L^p(\Sigma)}\geq0$. Thus, without some assumption on $H$, the argument does not produce any useful inequality.
\end{remark}

In our P\'olya--Szeg\"o inequality, a constant $C_{\mathrm{PS}}$ appears, whereas in the Euclidean setting this constant is equal to $1$. However, the following proposition characterizes the situations in which $C_{\mathrm{PS}}=1$.

\begin{prop}\label{propCPS}
Assume the hypotheses of Theorem \ref{MainPS}. Then the constant $C_{\mathrm{PS}}$ satisfies
\begin{equation*}
C_{\mathrm{PS}}\geq1.
\end{equation*}
Moreover, equality holds if and only if
\begin{equation*}
\mathrm{AVR}_g=1,\ m=1\mbox{ or }2,\mbox{ and }\Sigma\mbox{ is minimal}.
\end{equation*}
\end{prop}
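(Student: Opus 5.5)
The argument reduces to two elementary comparisons, using the explicit formula \eqref{CPS}. Write $C_{\mathrm{PS}} = n|B^n|^{\frac1n}/(C_{\mathrm{Bre}}-\|H\|_{L^n(\Sigma)})$. The hypothesis $\|H\|_{L^n(\Sigma)}<C_{\mathrm{Bre}}$ makes the denominator positive. Hence $C_{\mathrm{PS}}\geq1$ is equivalent to
\begin{equation*}
C_{\mathrm{Bre}}-\|H\|_{L^n(\Sigma)}\leq n|B^n|^{\frac1n}.
\end{equation*}
Since $\|H\|_{L^n(\Sigma)}\geq0$, it suffices to prove $C_{\mathrm{Bre}}\leq n|B^n|^{\frac1n}$. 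Equality in the original inequality then holds if and only if both $\|H\|_{L^n(\Sigma)}=0$ and $C_{\mathrm{Bre}}=n|B^n|^{\frac1n}$.

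\textbf{Step 1.} Bishop--Gromov gives $\mathrm{AVR}_g\leq1$; nonnegative sectional curvature implies nonnegative Ricci curvature, so this applies. For $m=1,2$ we get $C_{\mathrm{Bre}}=n\mathrm{AVR}_g^{\frac1n}|B^n|^{\frac1n}\leq n|B^n|^{\frac1n}$, with equality iff $\mathrm{AVR}_g=1$.

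\textbf{Step 2.} For $m\geq3$, we show the strict inequality
\begin{equation*}
\frac{(n+m)|B^{n+m}|}{m|B^m|}<|B^n|.
\end{equation*}
This is the main obstacle, though it is a computation with $|B^k|=\pi^{k/2}/\Gamma(\frac k2+1)$. Then $(n+m)|B^{n+m}|=2\pi^{\frac{n+m}2}/\Gamma(\frac{n+m}2)$ and $m|B^m|=2\pi^{\frac m2}/\Gamma(\frac m2)$. The inequality therefore becomes
\begin{equation*}
\frac{\Gamma(\frac m2)\,\Gamma(\frac n2+1)}{\Gamma(\frac{n+m}2)}<1,
\qquad\text{i.e.}\qquad
\frac n2\,B\!\left(\tfrac n2,\tfrac m2\right)<1 .
\end{equation*}
To prove it, write $\frac n2 B(\frac n2,\frac m2)=\frac n2\int_0^1 t^{\frac n2-1}(1-t)^{\frac m2-1}\,\mathrm dt$. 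For $m>2$ we have $(1-t)^{\frac m2-1}<1$ on $(0,1]$, so the integral is strictly less than $\frac n2\int_0^1 t^{\frac n2-1}\,\mathrm dt=1$. At $m=2$ this expression equals exactly $1$, which is consistent with the formula switching at $m=2$. Therefore, for $m\geq3$, $C_{\mathrm{Bre}}<\mathrm{AVR}_g^{\frac1n}n|B^n|^{\frac1n}\leq n|B^n|^{\frac1n}$, and equality is impossible.

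\textbf{Step 3.} Combining the steps gives $C_{\mathrm{PS}}\geq1$, with equality iff $m\in\{1,2\}$, $\mathrm{AVR}_g=1$ and $\|H\|_{L^n(\Sigma)}=0$. The condition $\|H\|_{L^n(\Sigma)}=0$ means $H\equiv0$ almost everywhere, hence everywhere by smoothness; that is, $\Sigma$ is minimal. Conversely, these three conditions plugged into \eqref{CPS} give $C_{\mathrm{PS}}=1$ directly.
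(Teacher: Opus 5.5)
Your proposal is correct and follows essentially the paper's route: drop $\|H\|_{L^n(\Sigma)}\ge0$, use $\mathrm{AVR}_g\le1$, and reduce the codimension factor to $\frac n2\beta(\frac n2,\frac m2)\le1$ via $(1-t)^{\frac m2-1}\le1$. The paper reaches the same Beta integral through $\beta(\frac n2+1,\frac m2+1)$ and folds $m=1,2$ into the case $m\geq2$ using $(n+2)|B^{n+2}|=2|B^2||B^n|$. Your explicit strict inequality for $m\ge3$ is a slightly cleaner way to settle the equality case. One small point: the step $C_{\mathrm{Bre}}<\mathrm{AVR}_g^{1/n}n|B^n|^{1/n}$ needs $\mathrm{AVR}_g>0$, which follows from the hypothesis $0\le\|H\|_{L^n(\Sigma)}<C_{\mathrm{Bre}}$.
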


We now present our applications for the P\'olya--Szeg\"o inequality. Throughout, $u$ is defined on the submanifold $\Sigma$, and $C_{\mathrm{PS}}$ denotes the P\'olya--Szeg\"o constant, which can be taken equal to 1 under the additional assumptions of Proposition \ref{propCPS}. Our first application is the following Sobolev inequality.

\begin{cor}[Sobolev inequality on submanifolds]\label{corSob}
Let $M$ be a complete noncompact manifold of dimension $n+m$ with nonnegative sectional curvature. Let $\Sigma$ be a submanifold of $M$ of dimension $n$ (possibly with boundary) satisfying $\|H\|_{L^n(\Sigma)}<C_{\mathrm{Bre}}$. Let $1<p<n$, and let $p^*:=\frac{np}{n-p}$ be its Sobolev conjugate. Then, for all $u\in W^{1,p}_0(\Sigma)$, we have
\begin{equation}\label{SobSubmfd}
    \|u\|_{L^{p^*}(\Sigma)} \leq C_{\mathrm{Sob}}\|\nabla^{\Sigma}u\|_{L^p(\Sigma)},
\end{equation}
where $C_{\mathrm{Sob}}=TA(n,p)C_{\mathrm{PS}}$, with $TA(n,p)$ being the Talenti's sharp constant \cite{MR0463908} given by
$$TA(n,p)=\dfrac{1}{\sqrt{\pi}n^{\frac1p}}\left(\dfrac{p-1}{n-p}\right)^{1-\frac{1}{p}}\left(\dfrac{\Gamma (1+\frac{n}{2})\Gamma(n)}{\Gamma(\frac{n}{p})\Gamma(1+n-\frac{n}{p})}\right)^{\frac{1}{n}}.$$
\end{cor}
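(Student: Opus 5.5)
The plan is to deduce \eqref{SobSubmfd} from Theorem~\ref{MainPS} together with Talenti's sharp Euclidean Sobolev inequality, applied to the Schwarz rearrangement $u^*$ on $\mathbb R^n$.

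First we reduce to $u\ge0$. Since $|\nabla^\Sigma|u||=|\nabla^\Sigma u|$ almost everywhere and $|u|\in W^{1,p}_0(\Sigma)$ whenever $u$ is, we may replace $u$ by $|u|$; in any case the rearrangement of Section~\ref{sec2} is defined through the distribution function of $|u|$. The one fact we need from that construction is equimeasurability: for every $t>0$,
\begin{equation*}
\mathcal L^n(\{u^*>t\})=\mathcal H^n_g(\{|u|>t\}).
\end{equation*}
By the layer-cake formula $\int\phi(|u|)=\int_0^\infty\phi'(t)\,\mu(t)\,\mathrm dt$, this gives
\begin{equation*}
\|u^*\|_{L^{p^*}(\mathbb R^n)}=\|u\|_{L^{p^*}(\Sigma)},\qquad \|u^*\|_{L^p(\mathbb R^n)}=\|u\|_{L^p(\Sigma)}.
\end{equation*}

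Next, Theorem~\ref{MainPS} shows that $u^*\in W^{1,p}(\mathbb R^n)$ and $\|\nabla u^*\|_{L^p(\mathbb R^n)}\le C_{\mathrm{PS}}\|\nabla^\Sigma u\|_{L^p(\Sigma)}$. We then apply Talenti's inequality $\|f\|_{L^{p^*}(\mathbb R^n)}\le TA(n,p)\|\nabla f\|_{L^p(\mathbb R^n)}$ to $f=u^*$. Chaining the three facts gives
\begin{equation*}
\|u\|_{L^{p^*}(\Sigma)}=\|u^*\|_{L^{p^*}(\mathbb R^n)}\le TA(n,p)\|\nabla u^*\|_{L^p(\mathbb R^n)}\le TA(n,p)\,C_{\mathrm{PS}}\|\nabla^\Sigma u\|_{L^p(\Sigma)}.
\end{equation*}

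The only point needing care is the applicability of Talenti's inequality to $u^*$. That inequality holds for all $f$ in $W^{1,p}(\mathbb R^n)$, or more generally for functions vanishing at infinity in measure, i.e.\ with $\mathcal L^n(\{|f|>t\})<\infty$ for all $t>0$. Here $u^*\in L^p(\mathbb R^n)$ by equimeasurability, so $u^*\in W^{1,p}(\mathbb R^n)$ and the inequality applies. If needed, one can instead approximate $u^*$ by $C_c^\infty$ functions in $W^{1,p}$ and use Fatou's lemma on the left-hand side. I do not expect any genuine obstacle; the content of the corollary lies entirely in Theorem~\ref{MainPS}.
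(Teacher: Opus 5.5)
Your proposal is correct and follows essentially the same route as the paper. The paper also chains equimeasurability $\|u\|_{L^{p^*}(\Sigma)}=\|u^*\|_{L^{p^*}(\mathbb R^n)}$, Talenti's inequality applied to $u^*$, and Theorem~\ref{MainPS}; your extra remarks on applying Talenti to $u^*\in W^{1,p}(\mathbb R^n)$ only fill in routine details the paper leaves implicit.
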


The same argument also gives the following log-Sobolev inequality; see, for instance, the classical works of Gross \cite{MR420249}, Weissler \cite{MR479373}, as well as the works by  Del Pino-Dolbeault \cite{MR1957678}  and Lam-Lu \cite{LamLu-CKN-ANS}.
\begin{cor}[Log-Sobolev inequality on submanifolds]\label{corLog}
Let $M$ be a complete noncompact manifold of dimension $n+m$ with nonnegative sectional curvature. Let $\Sigma$ be a complete submanifold of $M$ of dimension $n$ satisfying $\|H\|_{L^n(\Sigma)}<C_{\mathrm{Bre}}$. Then, for $1<p<n$, we have, for all $u\in W^{1,p}_0(\Sigma)$ with $\|u\|_{L^p(\Sigma)}=1$,
    $$\int_{\Sigma} |u|^p\log|u|\mathrm d\mathcal{H}^n_g\leq \dfrac{n}{p^2}\ln\left(C_{\mathrm{LS}}\int_{\Sigma}|\nabla^\Sigma u|^p\mathrm d\mathcal{H}^n_g\right),$$
where $C_{\mathrm{LS}}=\mathcal{L}_{n,p}C_{\mathrm{PS}}^p$ with
    \begin{equation*}
    \mathcal{L}_{n,p} = \frac{p}{n}\pi^{-\frac{p}{2}} \left( \frac{p-1}{e} \right)^{p-1} \left( \frac{\Gamma\left( \frac{n}{2} + 1 \right) }{\Gamma\left( \frac{n(p-1)}{p} + 1 \right)} \right)^{\frac{p}{n}}
    \end{equation*}
    being the Euclidean log-Sobolev constant.
\end{cor}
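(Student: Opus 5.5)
The plan is to transfer the Euclidean sharp log-Sobolev inequality to $\Sigma$ through the Schwarz rearrangement, using Theorem~\ref{MainPS}. The Euclidean $L^p$ log-Sobolev inequality of Del Pino--Dolbeault (the $L^p$ version due to Del Pino--Dolbeault and Gentil) states that for $v\in W^{1,p}(\mathbb R^n)$ with $\|v\|_{L^p(\mathbb R^n)}=1$,
\begin{equation*}
\int_{\mathbb R^n}|v|^p\log|v|\,\mathrm dx\leq\frac{n}{p^2}\ln\left(\mathcal L_{n,p}\int_{\mathbb R^n}|\nabla v|^p\,\mathrm dx\right).
\end{equation*}
We will apply this inequality with $v=u^*$.

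Let $u\in W^{1,p}_0(\Sigma)$ with $\|u\|_{L^p(\Sigma)}=1$. Since $|\nabla^\Sigma|u||=|\nabla^\Sigma u|$ a.e. and the left-hand side depends only on $|u|$, we may assume $u\geq0$. By the definition of the Schwarz rearrangement in Section~\ref{sec2}, $u$ and $u^*$ are equimeasurable, meaning $\mathcal H^n_g(\{u>t\})=\mathcal L^n(\{u^*>t\})$ for every $t>0$. By the layer-cake representation, every integral of the form $\int F(u)\,\mathrm d\mathcal H^n_g$ with $F(0)=0$ therefore equals $\int F(u^*)\,\mathrm dx$, provided one side is finite.

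We apply this with $F(t)=t^p$ to get $\|u^*\|_{L^p(\mathbb R^n)}=1$. We apply it with $F(t)=t^p\log t$ to get equality of the entropy terms. Since $t^p\log t$ changes sign at $t=1$, we treat the parts $\{u>1\}$ and $\{u\leq1\}$ separately. Each part is a monotone function of $u$ and is handled by the layer-cake formula. The negative part is finite, or the inequality is trivial if it equals $-\infty$.

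Theorem~\ref{MainPS} gives $u^*\in W^{1,p}(\mathbb R^n)$, so the Euclidean inequality applies to $u^*$. Combining it with the bound $\int|\nabla u^*|^p\leq C_{\mathrm{PS}}^p\int_\Sigma|\nabla^\Sigma u|^p\,\mathrm d\mathcal H^n_g$ from Theorem~\ref{MainPS}, and using that $\ln$ is increasing, we obtain
\begin{equation*}
\int_\Sigma u^p\log u\,\mathrm d\mathcal H^n_g=\int_{\mathbb R^n}(u^*)^p\log u^*\,\mathrm dx\leq\frac{n}{p^2}\ln\left(\mathcal L_{n,p}C_{\mathrm{PS}}^p\int_\Sigma|\nabla^\Sigma u|^p\,\mathrm d\mathcal H^n_g\right).
\end{equation*}
This is exactly the claim with $C_{\mathrm{LS}}=\mathcal L_{n,p}C_{\mathrm{PS}}^p$.

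The only delicate point is integrability of the entropy term. The positive part is controlled by $L^{p^*}$ integrability, which follows from Corollary~\ref{corSob}, so the equimeasurability identity is justified. If $\nabla^\Sigma u=0$ a.e., then the Sobolev inequality forces $u=0$, contradicting the normalization, so the logarithm on the right is well defined.
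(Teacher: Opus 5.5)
Your proposal is correct and follows essentially the same route as the paper: equimeasurability (Proposition~\ref{propeqm}) transfers the $L^p$ norm and the entropy term to $u^*$, then the Euclidean Del Pino--Dolbeault inequality is applied to $u^*$, and finally Theorem~\ref{MainPS} is combined with the monotonicity of $\ln$. One small slip is that $t^p\log t$ is not monotone on $[0,1]$ (it has a minimum at $t=e^{-1/p}$), but no monotonicity is needed since Proposition~\ref{propeqm} holds for any nonnegative Borel $\Phi$ with $\Phi(0)=0$; also, working directly with $u\in W^{1,p}_0(\Sigma)$ avoids the paper's density reduction, which would otherwise require passing the entropy term to the limit.
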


Next, we establish a version of the Gagliardo--Nirenberg inequality in our setting. We refer to \cite{MR1940370} for the Euclidean case.

\begin{cor}[Gagliardo--Nirenberg inequality on submanifolds]\label{cor13}
Let $M$ be a complete noncompact manifold of dimension $n+m$ with nonnegative sectional curvature. Let $\Sigma$ be a submanifold of $M$ of dimension $n$ (possibly with boundary $\partial\Sigma$) satisfying $\|H\|_{L^n(\Sigma)}<C_{\mathrm{Bre}}$. Assume that $1<p<n$ and $1<q<\frac{p(n-1)}{n-p}$. Then the following statements hold.

\smallskip

\noindent (i) If $q>p$, then for every $u\in C^\infty_0(\Sigma)$,
\begin{equation*}
\|u\|_{L^{p\frac{q-1}{p-1}}(\Sigma)}\leq C_{\mathrm{GN1}}\|\nabla^\Sigma u\|_{L^p(\Sigma)}^\theta\|u\|^{1-\theta}_{L^q(\Sigma)},
\end{equation*}
where $\theta=\frac{(q-p)n}{(q-1)(np+pq-nq)}$ and  $C_{\mathrm{GN1}}=\mathrm{GN}_1C_{\mathrm{PS}}^\theta$ with
\begin{equation*}
\mathrm{GN}_1\!=\!\left(\dfrac{q-p}{p\sqrt\pi}\right)^\theta\!\left(\dfrac{pq}{n(q-p)}\right)^{\frac{\theta}{p}}\!\left(\dfrac{np+pq-nq}{pq}\right)^{\frac{p-1}{p(q-1)}}\!\left(\dfrac{\Gamma\left(q\frac{p-1}{q-p}\right)\Gamma\left(\frac{n}{2}+1\right)}{\Gamma\left(\frac{p-1}{p}\frac{np+pq-nq}{q-p}\right)\!\Gamma\left(n\frac{p-1}{p}+1\right)}\!\right)^{\frac{\theta}{n}}\!.
\end{equation*}

\smallskip

\noindent (ii) If $q<p$, then for every $u\in C^\infty_0(\Sigma)$,
\begin{equation*}
\|u\|_{L^q(\Sigma)}\leq C_{\mathrm{GN2}}\|\nabla^\Sigma u\|^\theta_{L^p(\Sigma)}\|u\|^{1-\theta}_{L^{p\frac{q-1}{p-1}}(\Sigma)},
\end{equation*}
where $\theta=\frac{(p-q)n}{q(np+pq-nq-p)}$ and $C_{\mathrm{GN2}}=\mathrm{GN}_2C_{\mathrm{PS}}^\theta$ with
\begin{equation*}
\mathrm{GN}_2\!=\!\left(\!\dfrac{p-q}{p\sqrt\pi}\!\right)^\theta\!\left(\!\dfrac{pq}{n(p-q)}\!\right)^{\frac{\theta}{p}}\!\left(\!\dfrac{pq}{np+pq-nq}\!\right)^{\frac{1-\theta}{p}\frac{p-1}{q-1}}\!\left(\!\dfrac{\Gamma\left(\frac{p-1}{p}\frac{np+pq-nq}{p-q}+1\right)\Gamma\left(\frac{n}{2}+1\right)}{\Gamma\left(q\frac{p-1}{p-q}+1\right)\Gamma\left(n\frac{p-1}{p}+1\right)}\!\right)^{\frac{\theta}{n}}\!.
\end{equation*}
\end{cor}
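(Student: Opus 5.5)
The plan is to transfer the sharp Euclidean Gagliardo--Nirenberg inequalities of Del Pino--Dolbeault \cite{MR1940370} to $\Sigma$ through the Schwarz rearrangement, using Theorem~\ref{MainPS} for the gradient term. Let $u\in C^\infty_0(\Sigma)$. Since $\nabla^\Sigma|u|$ and $\nabla^\Sigma u$ have the same modulus a.e., we may replace $u$ by $|u|$ and assume $u\ge0$. Also $u\in W^{1,p}_0(\Sigma)$, so Theorem~\ref{MainPS} applies: $u^*\in W^{1,p}(\mathbb R^n;[0,\infty))$ and
\begin{equation*}
\|\nabla u^*\|_{L^p(\mathbb R^n)}\leq C_{\mathrm{PS}}\|\nabla^\Sigma u\|_{L^p(\Sigma)}.
\end{equation*}

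The second ingredient is equimeasurability. By the definition of the Schwarz rearrangement in Section~\ref{sec2},
\begin{equation*}
\mathcal L^n(\{u^*>t\})=\mathcal H^n_g(\{u>t\})\quad\text{for all } t>0.
\end{equation*}
By the layer-cake formula, this gives $\|u^*\|_{L^r(\mathbb R^n)}=\|u\|_{L^r(\Sigma)}$ for every $r\in[1,\infty)$. We use it with $r=q$ and with $r=p\frac{q-1}{p-1}$.

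For case (i), with $q>p$, we apply the sharp Euclidean inequality
\begin{equation*}
\|w\|_{L^{p\frac{q-1}{p-1}}(\mathbb R^n)}\leq \mathrm{GN}_1\|\nabla w\|_{L^p(\mathbb R^n)}^\theta\|w\|_{L^q(\mathbb R^n)}^{1-\theta}
\end{equation*}
to $w=u^*$. Replacing the $L^r(\mathbb R^n)$ norms of $u^*$ by those of $u$, and bounding $\|\nabla u^*\|_{L^p}^\theta$ by $C_{\mathrm{PS}}^\theta\|\nabla^\Sigma u\|_{L^p}^\theta$, gives the claim with $C_{\mathrm{GN1}}=\mathrm{GN}_1C_{\mathrm{PS}}^\theta$. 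Here $\theta\ge 0$, so raising the gradient bound to the power $\theta$ preserves its direction. Case (ii), with $q<p$, is identical: use the second Del Pino--Dolbeault inequality with constant $\mathrm{GN}_2$, exponent $\theta$, and the norms in the other order. The hypothesis $q<\frac{p(n-1)}{n-p}$ is exactly the range in which the Euclidean inequalities hold. In case (ii) it also guarantees $np+pq-nq-p>0$, so $\theta>0$.

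The only point needing care is that the Euclidean inequality must hold for $u^*$, which is merely in $W^{1,p}(\mathbb R^n)$ with finite $L^q$ and $L^{p\frac{q-1}{p-1}}$ norms, not in $C^\infty_0(\mathbb R^n)$. I would handle this by noting that the Del Pino--Dolbeault inequalities hold on the completion of $C_0^\infty(\mathbb R^n)$ under the norm $\|\nabla\cdot\|_{L^p}+\|\cdot\|_{L^q}$, or on the corresponding finite-norm space. Concretely, approximate $u^*$ by mollified truncations and pass to the limit using Fatou's lemma on the left-hand side. Since $u$ has compact support, $u^*$ is bounded and compactly supported, so all norms involved are finite and this approximation is routine. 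The main work is therefore only to check that the constants and exponents $\theta$ match those of \cite{MR1940370} after the normalization used here.
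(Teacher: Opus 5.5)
Your proposal is correct and follows the paper's proof: equimeasurability (Proposition~\ref{propeqm}), the Del Pino--Dolbeault inequality applied to $u^*$, and Theorem~\ref{MainPS} raised to the power $\theta>0$. Your mollification step, which justifies applying the Euclidean inequality to $u^*$ rather than to a $C^\infty_0$ function, fills a point the paper leaves implicit. One small fix: in case (ii) the exponent $p\frac{q-1}{p-1}$ can be below $1$, so you need the norm identity for all $r\in(0,\infty)$, which the same layer-cake argument gives (as in Proposition~\ref{propeqm}), not only for $r\ge 1$.
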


We also obtain a Hardy inequality which is an extension of the classical result of Carron  on $L^2$-Hardy inequalities  \cite{Carron97}, and $L^p$ case of Kombe and Özaydin \cite{KO09} to submanifolds; see Remark \ref{remarkhardy} for a discussion of the sharp constant. 

\begin{theo}[Hardy inequality on submanifolds]\label{theohardy}
Let $M$ be a complete noncompact manifold of dimension $n+m$ with nonnegative sectional curvature. Let $\Sigma$ be a complete submanifold of $M$ of dimension $n$ without boundary satisfying $\|H\|_{L^n(\Sigma)}<C_{\mathrm{Bre}}$ and $\text{Ric}_{\Sigma}\geq 0$. Then, for all $1<p<n$ and $u \in W^{1,p}_0(\Sigma)$, 
$$\|\nabla^\Sigma u\|_{L^p(\Sigma)}^p \geq \left(\dfrac{n-p}{p C_{\mathrm{PS}}}\right)^p \int_\Sigma \dfrac{|u|^p}{d(x,o)^p}\mathrm d\mathcal{H}_g^n,$$
where $o$ is a fixed point in $\Sigma$ and $d(\cdot,\cdot)$ denotes the distance in $\Sigma$.
\end{theo}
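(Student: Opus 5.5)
The plan is to reduce the inequality to the sharp Euclidean Hardy inequality for the Schwarz rearrangement $u^*$, using Theorem~\ref{MainPS} for the gradient term and a rearrangement inequality for the weighted term. The hypothesis $\mathrm{Ric}_\Sigma\geq0$ will enter only through Bishop--Gromov volume comparison on $\Sigma$.

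\emph{Step 1 (gradient side).} Replacing $u$ by $|u|$ changes neither side, so we may assume $u\geq0$. Theorem~\ref{MainPS} gives $u^*\in W^{1,p}(\mathbb R^n)$ and $\|\nabla u^*\|_{L^p(\mathbb R^n)}\leq C_{\mathrm{PS}}\|\nabla^\Sigma u\|_{L^p(\Sigma)}$. The classical Euclidean $L^p$-Hardy inequality holds for $W^{1,p}$ functions vanishing at infinity, which is the case for $u^*$ since it is radially decreasing with finite $L^p$ norm. It gives
\begin{equation*}
\int_{\mathbb R^n}|\nabla u^*|^p\,\mathrm dx\geq\Big(\frac{n-p}{p}\Big)^p\int_{\mathbb R^n}\frac{(u^*)^p}{|x|^p}\,\mathrm dx .
\end{equation*}
Combining the two displays, it remains to prove
\begin{equation*}
\int_\Sigma\frac{u^p}{d(x,o)^p}\,\mathrm d\mathcal H^n_g\leq\int_{\mathbb R^n}\frac{(u^*)^p}{|x|^p}\,\mathrm dx .
\end{equation*}

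\emph{Step 2 (weight side).} Set $f=u^p$ and $g=d(\cdot,o)^{-p}$ on $\Sigma$. Since $u^*$ is equimeasurable with $u$, we have $f^*=(u^*)^p$. By the layer-cake representation and Fubini,
\begin{equation*}
\int_\Sigma fg\,\mathrm d\mathcal H^n_g=\int_0^\infty\!\!\int_0^\infty\mathcal H^n_g(\{f>s\}\cap\{g>t\})\,\mathrm ds\,\mathrm dt .
\end{equation*}
We bound each integrand by $\min\{\mathcal H^n_g(\{f>s\}),\mathcal H^n_g(\{g>t\})\}$. The idea is then to compare $\{g>t\}$ with the corresponding Euclidean superlevel set of $|x|^{-p}$.
\begin{itemize}
\item The set $\{g>t\}$ is the geodesic ball $B_{t^{-1/p}}(o)\subset\Sigma$.
\item Since $\Sigma$ is complete without boundary and $\mathrm{Ric}_\Sigma\geq0$, Bishop--Gromov gives $\mathcal H^n_g(B_r(o))\leq|B^n|r^n=\mathcal L^n(\{|x|^{-p}>r^{-p}\})$.
\item Hence $\mathcal H^n_g(\{g>t\})\leq\mathcal L^n(\{|x|^{-p}>t\})$.
\item Also $\mathcal H^n_g(\{f>s\})=\mathcal L^n(\{f^*>s\})$, and $\{f^*>s\}$ and $\{|x|^{-p}>t\}$ are concentric balls.
\end{itemize}
Because these are concentric balls, the minimum of their measures equals $\mathcal L^n(\{f^*>s\}\cap\{|x|^{-p}>t\})$. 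Integrating back in $s$ and $t$ yields $\int_\Sigma fg\leq\int_{\mathbb R^n}f^*|x|^{-p}$, which is the desired bound.

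Putting Steps~1 and~2 together gives the stated inequality with constant $\big((n-p)/(pC_{\mathrm{PS}})\big)^p$.

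The main point needing care is Step~2. The function $g$ has infinite total mass, so one cannot simply take its Schwarz rearrangement and quote Hardy--Littlewood. The layer-cake argument above avoids this: it only needs the comparison of distribution functions, which is exactly Bishop--Gromov. One must also check that the relevant sets have finite measure, that is, that $\{f>s\}$ has finite measure for $s>0$. This holds since $u\in L^p$ (or by density, working first with $u\in C_c^\infty(\Sigma)$ and then passing to the limit via Fatou on the left and continuity of the right-hand side).
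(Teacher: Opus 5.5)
Your proof is correct and follows the paper's route: the P\'olya--Szeg\"o inequality together with the Euclidean $L^p$-Hardy inequality for $u^*$, followed by a layer-cake/Hardy--Littlewood comparison in which Bishop--Gromov on $\Sigma$ controls the distribution function of $d(\cdot,o)^{-p}$. The only difference is presentational: the paper forms the rearrangement $h^*$ of $h=d(\cdot,o)^{-p}$ and proves $h^*(y)\le|y|^{-p}$, while you build that comparison directly into the layer-cake argument. Forming $h^*$ is legitimate despite the infinite total mass, because $h$ is fast decaying (its superlevel sets are geodesic balls of finite volume).
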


Assuming $\mathcal H^n_g(\Sigma)<\infty$, then Corollary \ref{corSob} yields the embedding $W^{1,n}_0(\Sigma)\hookrightarrow L^q(\Sigma)$ for all $1\leq q<\infty$. Motivated by the classical works of Trudinger \cite{MR216286} and Moser \cite{zbMATH03323360}, we obtain the following Moser--Trudinger inequality.

\begin{theo}[Moser--Trudinger inequality on submanifolds]\label{theoMoser}
    Let $M$ be a complete noncompact manifold of dimension $n+m$ with nonnegative sectional curvature. Let $\Sigma$ be a submanifold of $M$ of dimension $n$ (possibly with boundary) satisfying $\|H\|_{L^n(\Sigma)}<C_{\mathrm{Bre}}$ and $\mathcal H_g^n(\Sigma)<\infty$. Then, there exists a constant $C_0>0$ such that
    $$\sup_{u\in \mathcal{A}} \int_{\Sigma} e^{\alpha |u|^{\frac{n}{n-1}}}\mathrm d\mathcal H^n_g\leq C_0 \mathcal H_g^n(\Sigma) ,$$
    for all $\alpha \leq \alpha_\Sigma$, where $\mathcal{A}:=\{u \in C^\infty_0(\Sigma): \|\nabla^{\Sigma} u\|_{L^n(\Sigma)}\leq 1 \},$
    and 
    $$\alpha_\Sigma=\dfrac{n\omega_{n-1}^{\frac{1}{n-1}}}{C_{\mathrm{PS}}^{\frac{n}{n-1}}},$$
    with $\omega_{n-1}$ the $(n-1)$-volume of the unit sphere $\mathbb{S}^{n-1}\subset\mathbb R^n$.
\end{theo}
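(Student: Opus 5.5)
The plan is to transfer the problem to $\mathbb R^n$ through the P\'olya--Szeg\"o inequality of Theorem~\ref{MainPS} with $p=n$, and then invoke Moser's sharp Euclidean inequality on a ball. Fix $u\in\mathcal A$. Replacing $u$ by $|u|$ changes neither the integrand nor $\|\nabla^\Sigma u\|_{L^n(\Sigma)}$, so we may take $u\ge0$. Since $u\in C^\infty_0(\Sigma)\subset W^{1,n}_0(\Sigma)$, Theorem~\ref{MainPS} gives
\begin{equation*}
u^*\in W^{1,n}(\mathbb R^n), \qquad \|\nabla u^*\|_{L^n(\mathbb R^n)}\le C_{\mathrm{PS}}\|\nabla^\Sigma u\|_{L^n(\Sigma)}\le C_{\mathrm{PS}}.
\end{equation*}
By the definition of the Schwarz rearrangement (Section~\ref{sec2}), $u^*$ is equimeasurable with $u$: $\mathcal L^n(\{u^*>t\})=\mathcal H^n_g(\{u>t\})$ for all $t>0$. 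Hence $u^*$ is supported in the closed ball $B_R\subset\mathbb R^n$ with $|B^n|R^n=\mathcal H^n_g(\Sigma)<\infty$. Moreover, the layer-cake formula applied to the increasing function $F(t)=e^{\alpha t^{n/(n-1)}}-1$ gives
\begin{equation*}
\int_\Sigma e^{\alpha u^{\frac{n}{n-1}}}\mathrm d\mathcal H^n_g=\mathcal H^n_g(\Sigma)-\mathcal L^n(B_R)+\int_{B_R}e^{\alpha (u^*)^{\frac{n}{n-1}}}\mathrm dx=\int_{B_R}e^{\alpha (u^*)^{\frac{n}{n-1}}}\mathrm dx .
\end{equation*}

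Next we normalize. Set $v=u^*/C_{\mathrm{PS}}$. Then $v\in W^{1,n}_0(B_R)$, since $u^*$ is radial, nonincreasing, vanishes on $\partial B_R$ and lies in $W^{1,n}$. We also have $\|\nabla v\|_{L^n}\le1$, and
\begin{equation*}
\alpha (u^*)^{\frac{n}{n-1}}=\alpha C_{\mathrm{PS}}^{\frac n{n-1}}v^{\frac n{n-1}}\le n\omega_{n-1}^{\frac1{n-1}}v^{\frac n{n-1}}=\alpha_n v^{\frac n{n-1}}
\end{equation*}
whenever $\alpha\le\alpha_\Sigma$. Moser's theorem states that $\sup\{\frac1{|\Omega|}\int_\Omega e^{\alpha_n|w|^{n/(n-1)}}:w\in W^{1,n}_0(\Omega),\ \|\nabla w\|_{L^n}\le1\}\le C_0(n)$. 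Applying it with $\Omega=B_R$ yields
\begin{equation*}
\int_\Sigma e^{\alpha u^{\frac n{n-1}}}\le C_0\,|B_R|=C_0\,\mathcal H^n_g(\Sigma).
\end{equation*}
The constant $C_0$ depends only on $n$, by scaling invariance.

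The main technical point is justifying that $v\in W^{1,n}_0(B_R)$, or alternatively approximating $v$ by smooth compactly supported functions, so that Moser's inequality applies. One can first use $W^{1,n}$-membership from Theorem~\ref{MainPS} and the radial monotone structure of $u^*$. If this is delicate, approximate by $(v-\epsilon)_+$, which is supported in a strictly smaller ball and has smaller gradient norm, and pass to the limit by monotone convergence. The equimeasurability identity for the exponential integral is routine once the rearrangement's distribution function is known to match that of $u$.
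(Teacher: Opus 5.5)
Your proposal is correct and follows the paper's proof essentially step by step. Both arguments apply equimeasurability with $\Phi(t)=e^{\alpha t^{\frac{n}{n-1}}}-1$ to move the integral to the ball of volume $\mathcal H^n_g(\Sigma)$, use the P\'olya--Szeg\"o bound $\|\nabla u^*\|_{L^n(\mathbb R^n)}\le C_{\mathrm{PS}}$, rescale by $C_{\mathrm{PS}}$, and finish with Moser's inequality on that ball. Your justification that $u^*/C_{\mathrm{PS}}\in W^{1,n}_0(B_R)$ fills in a detail the paper leaves implicit.
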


\begin{remark}
The two hypotheses of Theorem \ref{theoMoser}, namely $\|H\|_{L^n(\Sigma)}<C_{\mathrm{Bre}}$ and $\mathcal H^n_g(\Sigma)<\infty$, are compatible in general. For example, they hold if $\Sigma$ is compact with $\partial\Sigma\neq\emptyset$ and small total mean curvature. However, if $\Sigma$ is additionally complete with $\partial\Sigma=\emptyset$, the two conditions become incompatible. Indeed, fix $o\in\Sigma$ and let $V(R):=\mathcal H^n_g(B^\Sigma_R(o))$. By completeness, $\overline{B^\Sigma_R(o)}$ is compact. Then, we may apply Proposition \ref{propIP} to $\Omega=\overline{B^\Sigma_R(o)}$, together with the coarea formula $V'(R)=\mathcal H^{n-1}_g(\partial B^\Sigma_R(o))$, to get $V(R)^{\frac{n-1}n}\leq C_{\mathrm{IP}}V'(R)$ a.e., i.e.\ $\frac{\mathrm d}{\mathrm dR}V(R)^{1/n}\geq(nC_{\mathrm{IP}})^{-1}$. Hence $V(R)\to\infty$ as $R\to\infty$. Thus, $\mathcal H^n_g(\Sigma)$ is infinite if $\Sigma$ is noncompact. On the other hand, if $\Sigma$ is compact, Corollary \ref{cor319} gives a contradiction. Therefore, a complete manifold $\Sigma$ without boundary cannot satisfy both hypotheses simultaneously.
\end{remark}

In the supercritical case $\alpha>n\omega_{n-1}^{\frac{1}{n-1}}$, the supremum is infinite. This is established in the next result. Although this theorem is already known (see \cite[Proposition 3.6]{MR1236762} for higher-order derivatives), we include it for completeness.
\begin{theo}\label{theoIM}
Let $\Sigma$ be an $n$-dimensional Riemannian manifold with $\mathcal H^n_g(\Sigma)<\infty$. Then
\begin{equation*}
\sup_{u\in\mathcal A}\int_\Sigma e^{\alpha|u|^{\frac{n}{n-1}}}\mathrm d\mathcal H^n_g=\infty,
\end{equation*}
for all $\alpha>n\omega_{n-1}^{\frac{1}{n-1}}$, where $\mathcal{A}:=\{u \in C^\infty_0(\Sigma): \|\nabla^{\Sigma} u\|_{L^n(\Sigma)}\leq 1 \}$.
\end{theo}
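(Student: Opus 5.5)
The plan is to localize Moser's classical test functions in a small geodesic (normal-coordinate) ball of $\Sigma$. In such a ball the metric is almost Euclidean, so both the Dirichlet $n$-energy and the volume element differ from their Euclidean values only by factors $1+O(r)$. The finite volume hypothesis is not really needed for blow-up: the construction takes place in an arbitrarily small ball around any interior point.

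Fix an interior point $o\in\Sigma$ and choose $r_0>0$ so that $\exp_o$ is a diffeomorphism from the Euclidean ball $B_{r_0}\subset T_o\Sigma\cong\mathbb R^n$ onto its image. Shrinking $r_0$, we may assume that on $B_{r_0}$, in normal coordinates,
\begin{equation*}
(1-\varepsilon)\delta\leq g\leq(1+\varepsilon)\delta .
\end{equation*}
Then $|\nabla^\Sigma u|_g^n\,\mathrm d\mathcal H^n_g\leq(1+\varepsilon)^{c_n}|\nabla u|^n\,\mathrm dx$ and $\mathrm d\mathcal H^n_g\geq(1-\varepsilon)^{c_n}\mathrm dx$, where $c_n$ depends only on $n$. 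For $k\geq1$ and $\rho\leq r_0$, consider Moser's functions in the variable $x=\exp_o^{-1}(y)$:
\begin{equation*}
w_k(x)=\omega_{n-1}^{-\frac1n}\left\{\begin{array}{ll}k^{\frac{n-1}{n}}&|x|\leq\rho e^{-k/n},\\ k^{-\frac1n}\,n\log(\rho/|x|)&\rho e^{-k/n}\leq|x|\leq\rho,\\ 0&|x|\geq\rho.\end{array}\right.
\end{equation*}
A direct computation gives $\int_{\mathbb R^n}|\nabla w_k|^n\,\mathrm dx=1$. These functions are Lipschitz, compactly supported in the chart, and can be mollified into $C^\infty_0$ functions with energy changed by an arbitrarily small amount.

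Set $u_k=w_k/\|\nabla^\Sigma w_k\|_{L^n(\Sigma)}$, so that $u_k\in\mathcal A$ after smoothing. By the metric comparison, $\|\nabla^\Sigma w_k\|_{L^n}^{n/(n-1)}\leq(1+\varepsilon)^{c_n'}$. On the inner ball $\{|x|\leq\rho e^{-k/n}\}$ we have
\begin{equation*}
\alpha|u_k|^{\frac n{n-1}}\geq\alpha(1+\varepsilon)^{-c_n'}\,\omega_{n-1}^{-\frac1{n-1}}\,k .
\end{equation*}
The volume of this inner ball is at least $(1-\varepsilon)^{c_n}|B^n|\rho^n e^{-k}$. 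Hence
\begin{equation*}
\int_\Sigma e^{\alpha|u_k|^{\frac n{n-1}}}\,\mathrm d\mathcal H^n_g\geq(1-\varepsilon)^{c_n}|B^n|\rho^n\exp\!\Big(k\big[\alpha(1+\varepsilon)^{-c_n'}\omega_{n-1}^{-\frac1{n-1}}-1\big]\Big).
\end{equation*}
Since $\alpha>n\omega_{n-1}^{\frac1{n-1}}$, we can pick $\varepsilon$ small enough that the bracket is positive. Letting $k\to\infty$ then makes the supremum infinite.

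The main obstacle is bookkeeping rather than a new idea. One must check that the exponent gap survives the metric distortion, which is why $\varepsilon$ has to be chosen after $\alpha$. One also has to handle the smoothing of $w_k$ into $C^\infty_0$, for example by mollification together with continuity of the $L^n$ gradient norm and Fatou's lemma on the inner ball. If $\Sigma$ has boundary, we simply choose $o$ in the interior.
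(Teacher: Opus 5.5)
Your overall strategy is the same as the paper's: transplant Moser's sequence into a normal-coordinate ball and absorb the metric distortion using the strict gap $\alpha>n\omega_{n-1}^{\frac1{n-1}}$. The paper shrinks the support to radius $1/\ell$, so the distortion $1+C/\ell^2$ vanishes in the limit, and then invokes the Euclidean blow-up with an intermediate $\widetilde\alpha$. Freezing a ball with distortion $\varepsilon$ chosen after $\alpha$, as you do, works equally well, and your treatment of the smoothing into $C^\infty_0(\Sigma)$ addresses a point the paper does not spell out.

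However, your test functions are normalized incorrectly, and the key estimate fails as written. On the annulus $|\nabla w_k|=\omega_{n-1}^{-\frac1n}k^{-\frac1n}n/|x|$, hence
\[
\int_{\mathbb R^n}|\nabla w_k|^n\,\mathrm dx=\frac{n^n}{k}\int_{\rho e^{-k/n}}^{\rho}\frac{\mathrm dr}{r}=n^{n-1}\neq1 .
\]
So in fact $\|\nabla^\Sigma w_k\|_{L^n(\Sigma)}^{\frac n{n-1}}\leq n(1+\varepsilon)^{c_n'}$. On the inner ball you therefore only get $\alpha|u_k|^{\frac n{n-1}}\geq\frac{\alpha}{n}(1+\varepsilon)^{-c_n'}\omega_{n-1}^{-\frac1{n-1}}k$.

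A sanity check exposes the problem. Your bracket $\alpha(1+\varepsilon)^{-c_n'}\omega_{n-1}^{-\frac1{n-1}}-1$ is positive for small $\varepsilon$ as soon as $\alpha>\omega_{n-1}^{\frac1{n-1}}$. Applied to a Euclidean ball, your argument would then give blow-up for $\alpha\in(\omega_{n-1}^{\frac1{n-1}},n\omega_{n-1}^{\frac1{n-1}}]$, which contradicts Moser's theorem. In other words, the hypothesis $\alpha>n\omega_{n-1}^{\frac1{n-1}}$ is never genuinely used in your chain.

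The repair is to use the correctly normalized Moser functions: plateau value $\omega_{n-1}^{-\frac1n}(k/n)^{\frac{n-1}n}$ on $\{|x|\leq\rho e^{-k/n}\}$, and $\omega_{n-1}^{-\frac1n}(k/n)^{-\frac1n}\log(\rho/|x|)$ on the annulus. Equivalently, divide your $w_k$ by $n^{\frac{n-1}n}$; this is the paper's $\varphi_\ell$, up to the choice of radii. These functions have unit Euclidean $n$-energy, and your remaining steps then give
\[
\int_\Sigma e^{\alpha|u_k|^{\frac n{n-1}}}\,\mathrm d\mathcal H^n_g\geq(1-\varepsilon)^{c_n}|B^n|\rho^n\exp\Bigl(k\Bigl[\frac{\alpha}{n}(1+\varepsilon)^{-c_n'}\omega_{n-1}^{-\frac1{n-1}}-1\Bigr]\Bigr).
\]
Now the bracket is positive for small $\varepsilon$ exactly because $\alpha>n\omega_{n-1}^{\frac1{n-1}}$. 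With this correction your proof is complete.
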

Our results do not address the intermediate range
\begin{equation}\label{rangeconj}
\frac{n\omega_{n-1}^{\frac{1}{n-1}}}{C_{\mathrm{PS}}^{\frac{n}{n-1}}}<\alpha\leq n\omega_{n-1}^{\frac{1}{n-1}},
\end{equation}
which remains open. However, under the conditions with $C_{\mathrm{PS}}=1$ (given in Proposition \ref{propCPS}), there is no gap and the results are sharp.

When $\mathcal H^n_g(\Sigma)$ is infinite, the embedding $W^{1,n}_0(\Sigma)\hookrightarrow L^q(\Sigma)$ does not hold for $1\leq q<n$, and the exponential must be modified. We define
\begin{equation*}
\exp_n(t)=e^t-\sum_{i=0}^{n-2}\dfrac{t^i}{i!},\quad\forall n\in\mathbb N.
\end{equation*}
  For the theory of Moser-Trudinger-Adams inequalities with exact growth on the Euclidean space, see for example, \cite{MR3336837, MR3355498, LamLu-EG, LTZ-ANS, MR3848068, MR4721763}, and \cite{LuTang-JGA} on hyperbolic spaces. In our setting, we prove:
\begin{theo}[Exact growth inequality on submanifolds]\label{theoEG}
Let $M$ be a complete noncompact Riemannian manifold of dimension $n+m$ with nonnegative sectional curvature. Let $\Sigma$ be a submanifold of $M$ of dimension $n$ (possibly with boundary) satisfying $\|H\|_{L^n(\Sigma)}<C_{\mathrm{Bre}}$. There exists a constant $C>0$ such that for all $u\in W^{1,n}_0(\Sigma)$ satisfying $\|\nabla^\Sigma u\|_{L^n(\Sigma)}\leq1$,
\begin{equation*}
\int_\Sigma\dfrac{\exp_n\left(\alpha_\Sigma|u|^{\frac{n}{n-1}}\right)}{(1+|u|)^{\frac{n}{n-1}}}\mathrm d\mathcal H^n_g\leq C\|u\|^n_{L^n(\Sigma)},
\end{equation*}
where
\begin{equation*}
\alpha_\Sigma=\dfrac{n\omega_{n-1}^{\frac{1}{n-1}}}{C_{\mathrm{PS}}^{\frac{n}{n-1}}}.
\end{equation*}
\end{theo}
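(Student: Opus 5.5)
The plan is to transfer the problem to $\mathbb R^n$ by the P\'olya--Szeg\"o inequality of Theorem~\ref{MainPS} and then apply the known Euclidean exact growth inequality of Ibrahim--Masmoudi--Nakanishi type, in its $n$-dimensional form due to Masmoudi--Sani / Lam--Lu. That inequality reads: there is $C_n>0$ such that for every $v\in W^{1,n}(\mathbb R^n)$ with $\|\nabla v\|_{L^n(\mathbb R^n)}\le 1$,
\begin{equation*}
\int_{\mathbb R^n}\frac{\exp_n\left(\alpha_n|v|^{\frac{n}{n-1}}\right)}{(1+|v|)^{\frac{n}{n-1}}}\,\mathrm dx\le C_n\|v\|^n_{L^n(\mathbb R^n)},\qquad \alpha_n=n\omega_{n-1}^{\frac1{n-1}}.
\end{equation*}

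First, I reduce to $u\ge0$ by replacing $u$ with $|u|$. This leaves $\|\nabla^\Sigma u\|_{L^n}$ unchanged, and it does not change either side of the inequality. Next I take the Schwarz rearrangement $u^*$ of Section~\ref{sec2}, which is equimeasurable with $u$: $\mathcal L^n(\{u^*>t\})=\mathcal H^n_g(\{u>t\})$ for all $t>0$.

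Set $v:=u^*/C_{\mathrm{PS}}$. Theorem~\ref{MainPS} with $p=n$ gives $v\in W^{1,n}(\mathbb R^n)$ and
\begin{equation*}
\|\nabla v\|_{L^n(\mathbb R^n)}\le\|\nabla^\Sigma u\|_{L^n(\Sigma)}\le1.
\end{equation*}
Since $\alpha_\Sigma|u^*|^{\frac n{n-1}}=\alpha_n|v|^{\frac n{n-1}}$, I would then write
\begin{equation*}
\int_\Sigma\frac{\exp_n(\alpha_\Sigma u^{\frac n{n-1}})}{(1+u)^{\frac n{n-1}}}\,\mathrm d\mathcal H^n_g
=\int_{\mathbb R^n}\frac{\exp_n(\alpha_n v^{\frac n{n-1}})}{(1+C_{\mathrm{PS}}v)^{\frac n{n-1}}}\,\mathrm dx .
\end{equation*}
This identity holds by equimeasurability. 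The integrand is $F(u)$ with $F(t)=\exp_n(\alpha_\Sigma t^{n/(n-1)})(1+t)^{-n/(n-1)}$, and $F$ is continuous with $F(0)=0$. So the layer-cake formula applies without monotonicity, since the distribution functions of $u$ and $u^*$ coincide.

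Because $C_{\mathrm{PS}}\ge1$ by Proposition~\ref{propCPS}, we have $(1+C_{\mathrm{PS}}v)^{-\frac n{n-1}}\le(1+v)^{-\frac n{n-1}}$. The Euclidean inequality applied to $v$ then bounds the right-hand side by
\begin{equation*}
C_n\|v\|^n_{L^n}=C_nC_{\mathrm{PS}}^{-n}\|u^*\|_{L^n}^n=C_nC_{\mathrm{PS}}^{-n}\|u\|^n_{L^n(\Sigma)}.
\end{equation*}
So the claim holds with $C=C_nC_{\mathrm{PS}}^{-n}$, or simply $C=C_n$.

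I expect the main obstacle to be technical bookkeeping rather than anything deep. One issue is that $u\in W^{1,n}_0(\Sigma)$ may have infinite-measure support, so I must check that $u^*$ is well defined: $\mathcal H^n_g(\{u>t\})<\infty$ for $t>0$ by Chebyshev, since $u\in L^n$. The other is that Theorem~\ref{MainPS} is being used exactly at $p=n$, which is allowed since $1\le p<\infty$. If the Euclidean theorem is stated only for smooth or compactly supported functions, I would approximate $v$ in $W^{1,n}$ and pass to the limit with Fatou's lemma.
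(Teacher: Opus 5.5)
Your proposal is correct and is essentially the paper's proof. Both arguments pass to the Schwarz rearrangement via Proposition~\ref{propeqm}, use $C_{\mathrm{PS}}\ge1$ (Proposition~\ref{propCPS}) to replace the denominator $(1+u^*)^{\frac{n}{n-1}}$ by $(1+u^*/C_{\mathrm{PS}})^{\frac{n}{n-1}}$, and apply the Masmoudi--Sani exact growth inequality to $u^*/C_{\mathrm{PS}}$, whose gradient has $L^n$-norm at most $1$ by Theorem~\ref{MainPS} with $p=n$; your tracking of the constant as $C_nC_{\mathrm{PS}}^{-n}$ is slightly more careful than the paper's final line.
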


As consequences, we derive analogs of the works by Adachi--Tanaka \cite{MR1646323}, do \'O \cite{MR1704875},  Li--Ruf \cite{MR2400264} and Lam-Lu \cite{LamLu-JDE}.

\begin{cor}\label{cor1}
Under the assumptions of Theorem \ref{theoEG}, for any $\alpha<\alpha_\Sigma$, there exists $C_\alpha>0$ such that
\begin{equation*}
\int_\Sigma\exp_n(\alpha|u|^{\frac{n}{n-1}})\mathrm d\mathcal H^n_g\leq C_\alpha\|u\|^n_{L^n(\Sigma)}.
\end{equation*}
\end{cor}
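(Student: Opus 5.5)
The plan is to deduce Corollary \ref{cor1} from Theorem \ref{theoEG} by splitting $\Sigma$ according to the size of $|u|$. Fix $\alpha<\alpha_\Sigma$ and $u\in W^{1,n}_0(\Sigma)$ with $\|\nabla^\Sigma u\|_{L^n(\Sigma)}\leq1$; these are the assumptions of Theorem \ref{theoEG}. We also use that $\exp_n(t)$ is nondecreasing and nonnegative on $[0,\infty)$. Choose a threshold $A=A(\alpha)\geq1$, to be fixed below, and write $\Sigma=\{|u|\leq A\}\cup\{|u|>A\}$.

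On $\{|u|\leq A\}$ we use the elementary bound $\exp_n(t)\leq C(A,\alpha) t^{n-1}$ for $0\leq t\leq \alpha A^{\frac{n}{n-1}}$. It holds because $\exp_n(t)=\sum_{i\geq n-1}t^i/i!$, so $\exp_n(t)\leq t^{n-1}e^t$. With $t=\alpha|u|^{\frac n{n-1}}$ we get $t^{n-1}=\alpha^{n-1}|u|^n$. Hence
\begin{equation*}
\int_{\{|u|\leq A\}}\exp_n\bigl(\alpha|u|^{\frac{n}{n-1}}\bigr)\mathrm d\mathcal H^n_g\leq \alpha^{n-1}e^{\alpha A^{\frac n{n-1}}}\|u\|^n_{L^n(\Sigma)}.
\end{equation*}

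On $\{|u|>A\}$ we absorb the denominator in Theorem \ref{theoEG} using the gap $\alpha_\Sigma-\alpha>0$. Since $\exp_n(t)\leq e^t$, it suffices to have
\begin{equation*}
e^{\alpha s^{\frac n{n-1}}}(1+s)^{\frac n{n-1}}\leq \exp_n\bigl(\alpha_\Sigma s^{\frac n{n-1}}\bigr)\quad\text{for all } s>A.
\end{equation*}
For large $s$ we have $\exp_n(\alpha_\Sigma s^{\frac n{n-1}})\geq\frac12 e^{\alpha_\Sigma s^{\frac n{n-1}}}$, because the subtracted polynomial is negligible. So the required inequality reduces to $2(1+s)^{\frac n{n-1}}\leq e^{(\alpha_\Sigma-\alpha)s^{\frac n{n-1}}}$, which holds for all $s$ beyond some $A$ depending only on $n,\alpha,\alpha_\Sigma$. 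With this choice of $A$,
\begin{equation*}
\int_{\{|u|>A\}}\exp_n\bigl(\alpha|u|^{\frac{n}{n-1}}\bigr)\mathrm d\mathcal H^n_g\leq\int_\Sigma\frac{\exp_n\bigl(\alpha_\Sigma|u|^{\frac{n}{n-1}}\bigr)}{(1+|u|)^{\frac{n}{n-1}}}\mathrm d\mathcal H^n_g\leq C\|u\|^n_{L^n(\Sigma)}
\end{equation*}
by Theorem \ref{theoEG}.

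Adding the two estimates gives the claim with $C_\alpha=C+\alpha^{n-1}e^{\alpha A^{\frac n{n-1}}}$. The only point needing care is the choice of $A$ depending on $\alpha_\Sigma-\alpha$, and this is where $\alpha<\alpha_\Sigma$ is used. Everything else is elementary, so no step is a genuine obstacle.
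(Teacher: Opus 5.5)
Your proof is correct and is essentially the paper's argument. Both rest on a pointwise domination of $\exp_n(\alpha t^{\frac{n}{n-1}})$ by $\exp_n(\alpha_\Sigma t^{\frac{n}{n-1}})/(1+t)^{\frac{n}{n-1}}$, with the gap $\alpha_\Sigma-\alpha>0$ absorbing the factor $(1+t)^{\frac{n}{n-1}}$ at infinity, followed by Theorem~\ref{theoEG}. The only difference is cosmetic: the paper gets one global bound on $[0,\infty)$ by noting that $\phi_\alpha(t)=(1+t)^{\frac{n}{n-1}}\exp_n(\alpha t^{\frac{n}{n-1}})/\exp_n(\alpha_\Sigma t^{\frac{n}{n-1}})$ tends to $(\alpha/\alpha_\Sigma)^{n-1}$ as $t\to0$ and to $0$ as $t\to\infty$, whereas you cut at a level $A$ and control $\{|u|\leq A\}$ directly by $\|u\|^n_{L^n(\Sigma)}$ via $\exp_n(t)\leq t^{n-1}e^t$.
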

\begin{cor}\label{cor2}
Under the assumptions of Theorem \ref{theoEG}, for any $\tau>0$, there exists a constant $C_\tau>0$ such that
\begin{equation*}
\sup_{\underset{\|\nabla^\Sigma u\|^n_{L^n(\Sigma)}+\tau\|u\|^n_{L^n(\Sigma)}\leq1}{u\in W^{1,n}_0(\Sigma)}}\int_\Sigma\exp_n(\alpha_\Sigma|u|^{\frac{n}{n-1}})\mathrm d\mathcal H^n_g\leq C_\tau.
\end{equation*}
\end{cor}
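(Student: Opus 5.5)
We deduce Corollary~\ref{cor2} from Theorem~\ref{theoEG}, following the Euclidean argument of Lam--Lu. Fix $\tau>0$ and take $u\in W^{1,n}_0(\Sigma)$ with $\|\nabla^\Sigma u\|^n_{L^n(\Sigma)}+\tau\|u\|^n_{L^n(\Sigma)}\leq1$. In particular $\|\nabla^\Sigma u\|_{L^n(\Sigma)}\leq1$ and $\|u\|^n_{L^n(\Sigma)}\leq1/\tau$. We fix a threshold $L\geq1$, to be chosen depending only on $n$, and split $\Sigma$ into
\begin{equation*}
\Sigma_1=\{|u|\leq L\}\quad\text{and}\quad\Sigma_2=\{|u|>L\}.
\end{equation*}

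On $\Sigma_1$ we use the elementary bound $\exp_n(t)\leq C_{n,T}\,t^{n-1}$ for $0\leq t\leq T$, which holds because $\exp_n(t)$ is a power series beginning with $t^{n-1}$. With $T=\alpha_\Sigma L^{\frac n{n-1}}$ this gives
\begin{equation*}
\int_{\Sigma_1}\exp_n\bigl(\alpha_\Sigma|u|^{\frac n{n-1}}\bigr)\mathrm d\mathcal H^n_g\leq C\int_{\Sigma}|u|^n\mathrm d\mathcal H^n_g\leq C/\tau.
\end{equation*}

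On $\Sigma_2$ we apply Theorem~\ref{theoEG}. Since $|u|>L\geq1$ there, we have $(1+|u|)^{\frac n{n-1}}\leq (2|u|)^{\frac n{n-1}}$. Hence
\begin{equation*}
\exp_n\bigl(\alpha_\Sigma|u|^{\frac n{n-1}}\bigr)\leq (2|u|)^{\frac n{n-1}}\,\frac{\exp_n\bigl(\alpha_\Sigma|u|^{\frac n{n-1}}\bigr)}{(1+|u|)^{\frac n{n-1}}}.
\end{equation*}
The factor $|u|^{\frac n{n-1}}$ is unbounded, so Theorem~\ref{theoEG} cannot be applied to $u$ directly. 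This is the main obstacle, and we resolve it by exploiting the slack supplied by $\tau\|u\|^n$.

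We set $v=(|u|-L)^+$, which lies in $W^{1,n}_0(\Sigma)$ and satisfies $\|\nabla^\Sigma v\|^n\leq\|\nabla^\Sigma u\|^n\leq1-\tau\|u\|^n$. On $\Sigma_2$ we compare $|u|^{\frac n{n-1}}=(v+L)^{\frac n{n-1}}$ with $\lambda v^{\frac n{n-1}}$ for a suitable $\lambda>1$. By the convexity inequality $(a+b)^{q}\leq(1+\varepsilon)a^q+C_\varepsilon b^q$ with $q=\tfrac n{n-1}$,
\begin{equation*}
|u|^{\frac n{n-1}}\leq(1+\varepsilon)v^{\frac n{n-1}}+C_\varepsilon L^{\frac n{n-1}}.
\end{equation*}
We choose $\varepsilon$ so that $w=(1+\varepsilon)^{\frac{n-1}n}v$ still has $\|\nabla^\Sigma w\|_{L^n(\Sigma)}\leq1$. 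Since $(1+\varepsilon)^{n-1}(1-\tau\|u\|^n)\leq1$ is needed, we take $1+\varepsilon=(1-\tau\|u\|^n)^{-\frac1{n-1}}$. Then $\exp_n$ on $\Sigma_2$ is bounded by $e^{\alpha_\Sigma C_\varepsilon L^{\frac n{n-1}}}\exp\bigl(\alpha_\Sigma|w|^{\frac n{n-1}}\bigr)$.

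The term $C_\varepsilon$ blows up as $\tau\|u\|^n\to0$, so we need $C_\varepsilon L^{\frac n{n-1}}$ to stay bounded. We achieve this by tying the threshold to $\|u\|$. Chebyshev gives $\mathcal H^n_g(\Sigma_2)\leq\|u\|^n/L^n$. Taking $q$-convexity in the form $(a+b)^q\leq a^q+q2^{q-1}(a^{q-1}b+b^q)$ together with Young's inequality yields $C_\varepsilon\sim\varepsilon^{-1/(n-1)}$ and $\varepsilon\sim\tau\|u\|^n$. The product is then controlled by choosing $L$ fixed and handling the small-$\|u\|$ regime through the $\Sigma_1$ estimate, which avoids exponentials altogether.

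Finally, we bound $\int_{\Sigma_2}\exp(\alpha_\Sigma|w|^{\frac n{n-1}})$ by splitting $\exp=\exp_n+$ a polynomial. The polynomial part is bounded by $C\bigl(\mathcal H^n_g(\Sigma_2)+\|w\|_{L^n}^n\bigr)$. For the $\exp_n$ part we apply Theorem~\ref{theoEG} to $w$ on $\{w\geq1\}$, again absorbing the $(1+|w|)^{\frac n{n-1}}$ denominator. Combining everything gives a bound depending only on $n$ and $\tau$.

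The delicate step is this last absorption of the denominator together with the constant $C_\varepsilon$. If the clean route fails, we fall back on the standard two-step device: first use Corollary~\ref{cor1} for subcritical $\alpha$, and then apply the Lions-type concentration argument on $\Sigma_2$, whose measure is at most $1/(\tau L^n)$.
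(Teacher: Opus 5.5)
Your proposal has a genuine gap, and it sits exactly at the additive constant on $\Sigma_2$ that you flag yourself. Write $\|u\|^n$ for $\|u\|^n_{L^n(\Sigma)}$. With your choice $1+\varepsilon=(1-\tau\|u\|^n)^{-\frac1{n-1}}$, even the optimal constant in $(a+b)^{\frac n{n-1}}\le(1+\varepsilon)a^{\frac n{n-1}}+C_\varepsilon b^{\frac n{n-1}}$ is $C_\varepsilon=\bigl(1-(1+\varepsilon)^{-(n-1)}\bigr)^{-\frac1{n-1}}=(\tau\|u\|^n)^{-\frac1{n-1}}$. Your prefactor is therefore $\exp\bigl(\alpha_\Sigma(L^n/(\tau\|u\|^n))^{\frac1{n-1}}\bigr)$, which is unbounded as $\|u\|\to0$ when $L$ is fixed.

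Your remedy, handling small $\|u\|$ through the $\Sigma_1$ estimate, does not work, because a small $L^n$-norm does not make $\Sigma_2$ empty. Take the Moser functions $\psi_\ell$ of Theorem~\ref{theosuperc}, normalized so that $\|\nabla^\Sigma\psi_\ell\|^n+\tau\|\psi_\ell\|^n\le1$. They satisfy $\|\psi_\ell\|^n\le C\ell^{-n-1}$, while $\Sigma_2$ contains the ball where $\eta\equiv1$, of measure $\gtrsim\ell^{-n}e^{-\ell}$. Your upper bound, which is at least $\mathcal H^n_g(\Sigma_2)$ times the prefactor, is then $\gtrsim\ell^{-n}e^{-\ell}\exp(c\,\ell^{\frac{n+1}{n-1}})\to\infty$. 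This happens along exactly the concentrating sequences that make the critical exponent delicate.

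Your phrases ``tying the threshold to $\|u\|$'' and ``choosing $L$ fixed'' contradict each other, and it is the first one you need. Take $L^n=\tau\|u\|^n$, assuming $0<\tau\|u\|^n<1$ (the remaining cases are trivial). Then:
\begin{itemize}
\item $L\le1$, so your $\Sigma_1$ bound holds with $T=\alpha_\Sigma$ and gives $\exp_n(\alpha_\Sigma)/\tau$.
\item The additive term is exactly $C_\varepsilon L^{\frac n{n-1}}=1$. So $|u|^{\frac n{n-1}}\le|w|^{\frac n{n-1}}+1$ on $\Sigma_2$, where $w=(1-\tau\|u\|^n)^{-\frac1n}(|u|-L)^+$ satisfies $\|\nabla^\Sigma w\|_{L^n(\Sigma)}\le1$.
\item Chebyshev still gives $\mathcal H^n_g(\Sigma_2)\le\|u\|^n/L^n=1/\tau$.
\end{itemize}

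The final step also fails as written. Applying Theorem~\ref{theoEG} to $w$ on $\{w\ge1\}$ brings back the unbounded weight $(1+|w|)^{\frac n{n-1}}$, the very obstacle you identified earlier. No bound on $\int\exp_n(\alpha_\Sigma|w|^{\frac n{n-1}})$ can come from $\|\nabla w\|_{L^n}\le1$ plus an $L^n$ bound alone. Already in $\mathbb R^n$, dilate the sequence of Theorem~\ref{theosuperc} (with $q=0$) to unit $L^n$-norm: the gradient norm is unchanged and this integral blows up. The Lions-type fallback does not yield a uniform constant either.

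What closes the argument is the finite measure of $\Sigma_2=\{w>0\}$. The rearrangement $w^*$ is supported in a ball $D$ with $|D|\le1/\tau$, and $\|\nabla w^*\|_{L^n(\mathbb R^n)}\le C_{\mathrm{PS}}$ by Theorem~\ref{MainPS}. Since $\alpha_\Sigma C_{\mathrm{PS}}^{\frac n{n-1}}=n\omega_{n-1}^{\frac1{n-1}}$, Moser's inequality on $D$ (as in the proof of Theorem~\ref{theoMoser}) gives $\int_{\Sigma_2}e^{\alpha_\Sigma|w|^{\frac n{n-1}}}\mathrm d\mathcal H^n_g\le C_0|D|\le C_0/\tau$. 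Hence $\int_{\Sigma_2}\exp_n(\alpha_\Sigma|u|^{\frac n{n-1}})\mathrm d\mathcal H^n_g\le e^{\alpha_\Sigma}C_0/\tau$.

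With these two corrections your truncation argument (essentially Li--Ruf's) becomes a complete proof that never uses Theorem~\ref{theoEG}. The paper instead derives the corollary from Theorem~\ref{theoEG}:
\begin{itemize}
\item If $\|u\|^n\ge\frac{n-1}{\tau n}$, it applies Corollary~\ref{cor1} to $n^{\frac1n}u$.
\item Otherwise, on $\{|u|\ge1\}$ it uses H\"older with exponent $q=\frac{n-1}{n-1-\tau\|u\|^n}$ and the inequality $[\exp_n(t)]^q\le\exp_n(qt)$.
\item It then applies Theorem~\ref{theoEG} to $q^{\frac{n-1}n}u$, whose gradient norm is at most one.
\item Finally, it uses Corollary~\ref{cor1} to control the leftover factor $\int|u|^{\frac n{(n-1)(q-1)}}$.
\end{itemize}
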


Finally, we show that the inequalities fail for $\alpha>n\omega_{n-1}^{\frac{1}{n-1}}$ or exponents $q<\frac{n}{n-1}$. As in the Moser--Trudinger case, the behavior in the range \eqref{rangeconj} is also an open question.

\begin{theo}\label{theosuperc}
Let $\Sigma$ be an $n$-dimensional Riemannian manifold. Then there exists a sequence $(\psi_\ell)$ such that $\|\nabla^\Sigma\psi_\ell\|_{L^n(\Sigma)}\leq 1$ and
\begin{equation}\label{eqsuc1}
    \dfrac{1}{\|\psi_\ell\|^n_{L^n(\Sigma)}}\int_{\Sigma}\dfrac{\exp_n\left(n\omega_{n-1}^{\frac{1}{n-1}}|\psi_\ell|^{\frac{n}{n-1}}\right)}{(1+|\psi_\ell|)^q}\mathrm d\mathcal H^n_g\overset{\ell\to\infty}\longrightarrow \infty,\quad\forall q<\frac{n}{n-1}.
\end{equation}
Moreover, for all $\alpha>n\omega_{n-1}^{\frac{1}{n-1}}$ and $q\geq0$, we have
\begin{equation}\label{eqsuc2}
\int_{\Sigma}\dfrac{\exp_n\left(\alpha|\psi_\ell|^{\frac{n}{n-1}}\right)}{(1+|\psi_\ell|)^q}\mathrm d\mathcal H^n_g\overset{\ell\to\infty}\longrightarrow \infty.
\end{equation}
\end{theo}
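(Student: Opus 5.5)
We construct Moser-type concentrating functions inside a small normal-coordinate ball on $\Sigma$, where the metric is almost Euclidean, and read off both limits directly.

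\emph{Construction.} Fix $o\in\Sigma$ in the interior and a radius $r_0>0$ so that normal coordinates are defined on $B^\Sigma_{r_0}(o)$ with $g_{ij}=\delta_{ij}+O(r^2)$. Let $\varepsilon_\ell\to0$. On $B^\Sigma_{r_\ell}(o)$, with $r_\ell\le r_0$ small enough that the metric distortion factor there is at most $1+\varepsilon_\ell$, set $t=\log(r_\ell/d(x,o))$ and define the Moser function
\[
m_\ell(t)=\begin{cases}\omega_{n-1}^{-1/n}\ell^{(n-1)/n}, & t\ge \ell,\\ \omega_{n-1}^{-1/n}\ell^{-1/n}t, & 0\le t\le\ell,\\ 0, & t\le0.\end{cases}
\]
Finally put $\psi_\ell=(1+\varepsilon_\ell)^{-1}m_\ell$.

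\emph{Gradient bound.} In $\mathbb R^n$, $\|\nabla m_\ell\|_{L^n}=1$. Comparing $d\mathcal H^n_g$ and $|\nabla^\Sigma\cdot|$ with their Euclidean counterparts via the metric distortion gives $\|\nabla^\Sigma\psi_\ell\|_{L^n(\Sigma)}\le1$, provided the distortion and $\varepsilon_\ell$ are related suitably. If needed, one can take $\varepsilon_\ell$ fixed and let it tend to zero along a diagonal sequence.

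\emph{Exponential mass on the inner ball.} Up to factors $1\pm o(1)$, volumes are Euclidean. On the inner ball $\{t\ge\ell\}$, whose volume is $|B^n|r_\ell^ne^{-n\ell}$, we have
\[
n\omega_{n-1}^{1/(n-1)}|\psi_\ell|^{n/(n-1)}=n\ell(1+\varepsilon_\ell)^{-n/(n-1)}.
\]
Hence the integrand there is approximately $e^{n\ell(1-c\varepsilon_\ell)}\ell^{-q(n-1)/n}$, and the inner-ball contribution is
\[
\gtrsim r_\ell^n e^{-cn\ell\varepsilon_\ell}\,\ell^{-q(n-1)/n}.
\]

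\emph{$L^n$ norm.} We have $\|\psi_\ell\|_{L^n}^n\approx\int_0^\ell \omega_{n-1}^{-1}\ell^{-1}t^n\,\omega_{n-1}r_\ell^ne^{-nt}\,dt$. This is of order $r_\ell^n/\ell$, since the integral is dominated by $t=O(1)$.

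\emph{First limit \eqref{eqsuc1}.} The ratio is therefore $\gtrsim \ell^{1-q(n-1)/n}e^{-cn\ell\varepsilon_\ell}$. The exponent $1-q(n-1)/n$ is positive exactly when $q<n/(n-1)$. Thus the ratio tends to infinity provided $\ell\varepsilon_\ell\to0$.

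\emph{Main obstacle.} This requirement forces $\varepsilon_\ell=o(1/\ell)$, i.e.\ the metric must be Euclidean up to $o(1/\ell)$ on the whole support. We achieve this by taking $r_\ell$ small, e.g.\ $r_\ell^2\le 1/\ell^2$, because $g=\delta+O(r^2)$. Note that the $r_\ell^n$ factor cancels in the ratio, so shrinking $r_\ell$ costs nothing.

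\emph{Second limit \eqref{eqsuc2}.} Here there is no normalization. We need $r_\ell^ne^{(\alpha/(n\omega_{n-1}^{1/(n-1)})-1)n\ell}\,\ell^{-q}\to\infty$ for each fixed $\alpha>n\omega_{n-1}^{1/(n-1)}$. Shrinking $r_\ell$ only polynomially in $\ell$ still permits this, since the exponential gain dominates.

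The remaining care lies in making the local Euclidean comparison quantitative with explicit $(1+Cr^2)$ factors for both the volume and the gradient norm.
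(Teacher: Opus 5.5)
Your proposal is correct and follows essentially the same route as the paper. Both transplant Moser functions through normal coordinates onto a ball of radius $\sim 1/\ell$, so the metric distortion is $O(\ell^{-2})$ and $\ell\cdot O(\ell^{-2})\to0$; the $L^n$ norm of order $r_\ell^n/\ell$ then cancels the inner-ball volume in the normalized ratio and leaves the factor $\ell^{1-q\frac{n-1}{n}}$. The differences are cosmetic: you divide by an a priori bound $(1+\varepsilon_\ell)$ instead of the exact gradient norm, and your parameter $\ell$ plays the role of the paper's $\ell/n$.
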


\smallskip

\noindent\textbf{Outline of the paper:} The rest of the paper is divided as follows. In Section \ref{sec2}, we establish an isoperimetric inequality for $\Omega\subset\Sigma$ (see Proposition \ref{propIP}), introduce the Schwarz symmetrization, prove the P\'olya--Szeg\"o inequality in Theorem \ref{MainPS}, and show Proposition \ref{propCPS} concerning the constant $C_{\mathrm{PS}}$. Section \ref{sec3} is devoted to the Sobolev, Log-Sobolev, Gagliardo--Nirenberg, and Hardy inequalities, providing the proofs of Corollaries \ref{corSob}, \ref{corLog}, and \ref{cor13}, and Theorem \ref{theohardy}. Section \ref{sec4} focuses on the Moser--Trudinger inequality, proving Theorems \ref{theoMoser} and \ref{theoIM}. Finally, Section \ref{sec5} is dedicated to the exact growth inequalities, establishing Theorems \ref{theoEG} and \ref{theosuperc} as well as Corollaries \ref{cor1} and \ref{cor2}.

\section{The P\'olya--Szeg\"o Inequality}\label{sec2}

Now we state the isoperimetric inequality in our setting. However, the characterization of the equality case has been established only in codimension two (and, consequently, also in codimension one); see \cite[Theorem 1.6]{MR4612577}.
\begin{prop}\label{propIP}
Let $M$ be a complete noncompact manifold of dimension $n+m$ with nonnegative sectional curvature. Let $\Sigma\subset M$ be an $n$-dimensional submanifold, possibly with boundary, and let $\Omega\subset\Sigma$ be a compact $n$-dimensional submanifold, possibly with boundary $\partial \Omega$ as a submanifold. If $\|H\|_{L^n(\Sigma)}<C_{\mathrm{Bre}}$, then
\begin{equation*}
\left(\mathcal H^n_g(\Omega)\right)^{\frac{n-1}n}\leq C_{\mathrm{IP}}\mathcal H^{n-1}_g(\partial\Omega),
\end{equation*}
where the isoperimetric constant is given by
\begin{equation*}
C_{\mathrm{IP}}=C_{\mathrm{IP}}\left(n,m,\mathrm{AVR}_g^{\frac1n},\|H\|_{L^n(\Sigma)}\right)=\dfrac{1}{C_{\mathrm{Bre}}-\|H\|_{L^n(\Sigma)}}.
\end{equation*}
Moreover, when $m=1$ or $m=2$, the equality holds if $\Omega$ is a flat round $n$-ball and $M$ is isometric to $\mathbb{R}^{n+m}$. Conversely, the equality case implies this geometric characterization when $m=1$ or $m=2$.
\end{prop}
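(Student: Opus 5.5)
The plan is to apply Theorem~\ref{theobrendle} to the compact submanifold $\Omega$ itself, with the test function $u\equiv 1$. Strictly speaking $u$ must be positive and smooth on $\Omega$, and the constant function $1$ qualifies. Since $\nabla^\Sigma u=0$, the left-hand side becomes
\[
\int_\Omega |H|\,\mathrm d\mathcal H^n_g+\mathcal H^{n-1}_g(\partial\Omega),
\]
and the right-hand side is $C_{\mathrm{Bre}}\,\mathcal H^n_g(\Omega)^{\frac{n-1}{n}}$. Here the mean curvature vector of $\Omega$ is the restriction of that of $\Sigma$, because $\Omega$ is an open piece (its closure) of $\Sigma$ of the same dimension.

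The next step is to control the curvature term by Hölder's inequality with exponents $n$ and $\frac{n}{n-1}$:
\[
\int_\Omega|H|\,\mathrm d\mathcal H^n_g\le \|H\|_{L^n(\Omega)}\,\mathcal H^n_g(\Omega)^{\frac{n-1}{n}}\le \|H\|_{L^n(\Sigma)}\,\mathcal H^n_g(\Omega)^{\frac{n-1}{n}}.
\]
Substituting this bound gives
\[
\bigl(C_{\mathrm{Bre}}-\|H\|_{L^n(\Sigma)}\bigr)\mathcal H^n_g(\Omega)^{\frac{n-1}{n}}\le\mathcal H^{n-1}_g(\partial\Omega).
\]
The hypothesis $\|H\|_{L^n(\Sigma)}<C_{\mathrm{Bre}}$ makes the coefficient positive, so dividing by it yields the stated inequality with $C_{\mathrm{IP}}=(C_{\mathrm{Bre}}-\|H\|_{L^n(\Sigma)})^{-1}$.

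For the equality statement with $m\in\{1,2\}$, the sufficiency direction is direct. If $M=\mathbb R^{n+m}$ and $\Omega$ is a flat round ball, then $\mathrm{AVR}_g=1$ and $H=0$, so $C_{\mathrm{IP}}=(n|B^n|^{1/n})^{-1}$. This is exactly the classical Euclidean isoperimetric constant, which balls attain.

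The converse is where I expect the main difficulty to lie. Equality in the final inequality forces equality in both steps above. First, equality in Brendle's inequality holds for $u\equiv1$ on $\Omega$. Second, either Hölder is an equality with $\|H\|_{L^n(\Omega)}=\|H\|_{L^n(\Sigma)}$, or both curvature quantities vanish. Brendle's rigidity result \cite[Theorem 1.6]{MR4612577}, available for codimension $2$ and hence for codimension $1$ via the product trick described before Theorem~\ref{theobrendle}, then says that $M$ is isometric to $\mathbb R^{n+m}$, that $\Omega$ is a flat ball, and that $u$ is constant. The care needed is in checking that the extension $\Omega\times\{0\}\subset M\times\mathbb R$ preserves the equality case, and that a flat ball indeed has $H=0$, so the Hölder step is consistent.
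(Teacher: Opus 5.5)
Your proposal follows the paper's proof step for step: Brendle's inequality applied on $\Omega$ with $u\equiv 1$, H\"older's inequality on $\int_\Omega|H|\,\mathrm d\mathcal H^n_g$, division by $C_{\mathrm{Bre}}-\|H\|_{L^n(\Sigma)}>0$, and, for the converse, Brendle's codimension-two rigidity \cite[Theorem 1.6]{MR4612577} combined with the product trick $\Omega\times\{0\}\subset M\times\mathbb R$ for $m=1$.

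One caveat applies to the sufficiency direction, and the paper shares it when it says ``equality follows immediately''. A flat round $\Omega$ only gives $H=0$ on $\Omega$, while $C_{\mathrm{IP}}$ involves $\|H\|_{L^n(\Sigma)}$, so the conclusion $C_{\mathrm{IP}}=(n|B^n|^{\frac1n})^{-1}$ needs $H\equiv0$ on all of $\Sigma$. Your own converse analysis shows this extra condition is also forced: once rigidity gives $H=0$ on $\Omega$, equality in the H\"older step requires $\|H\|_{L^n(\Sigma)}\,\mathcal H^n_g(\Omega)^{\frac{n-1}{n}}=0$.
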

\begin{proof}
This proof is based on \cite[Proposition 6]{MR4977133}. By applying $u\equiv1$ in Theorem \ref{theobrendle} to $\Omega\subset\Sigma$ (and noting that the mean curvature of $\Omega$ as a submanifold of $M$ coincides with $H|_\Omega$ because $\Omega$ is open in $\Sigma$), we obtain
\begin{equation*}
C_{\mathrm{Bre}}(\mathcal H^n_g(\Omega))^{\frac{n-1}n}\leq \mathcal H^{n-1}_g(\partial\Omega)+\int_\Omega|H|\mathrm d\mathcal H^n_g.
\end{equation*}
Using H\"older's inequality, we have
\begin{equation*}
C_{\mathrm{Bre}}(\mathcal H^n_g(\Omega))^{\frac{n-1}n}\leq \mathcal H^{n-1}_g(\partial\Omega)+\|H\|_{L^n(\Sigma)}(\mathcal H^n_g(\Omega))^{\frac{n-1}n}.
\end{equation*}
Therefore,
\begin{equation*}
\left(C_{\mathrm{Bre}}-\|H\|_{L^n(\Sigma)}\right)(\mathcal H^n_g(\Omega))^{\frac{n-1}n}\leq \mathcal H^{n-1}_g(\partial\Omega).
\end{equation*}
This completes the proof of the inequality.

If $\Omega$ is a flat, round $n$-ball and $M \simeq \mathbb{R}^{n+m}$, then equality follows immediately. For the characterization of the equality when $m=1$ or $m=2$, we notice that the case $m=1$ can be reduced to $m=2$ by considering the new set $\widetilde\Omega=\Omega\times\{0\}\subset M\times\mathbb R=\widetilde M$. The conclusion in codimension two follows from \cite[Theorem 1.6]{MR4612577}. This concludes the proof of the proposition.
\end{proof}

Since $\Sigma$ may satisfy $\partial\Sigma=\emptyset$, the reader might worry that closed examples (such as the $n$-dimensional sphere $\mathbb S^n\subset\mathbb R^{n+1}$) provide a counterexample to Proposition~\ref{propIP} with $\Omega=\Sigma$. This is not the case: the assumption $\|H\|_{L^n(\Sigma)}<C_{\mathrm{Bre}}$ can never hold when $\Sigma$ is closed. For example, for $\mathbb S^{n}$ we have
\begin{equation*}
\dfrac{\|H\|_{L^n(\Sigma)}}{C_{\mathrm{Bre}}}=\left(\dfrac{\omega_n}{|B^n|}\right)^{\frac1n}>1.
\end{equation*}
Indeed, applying $u\equiv 1$ on Theorem \ref{theobrendle} guarantees the following corollary.
\begin{cor}\label{cor319}
Let $M$ be a complete noncompact manifold of dimension $n+m$ with nonnegative sectional curvature. Assume that there exists $\Sigma\subset M$ a closed $n$-dimensional submanifold. Then
\begin{equation*}
\int_\Sigma|H|\mathrm d\mathcal H^n_g\geq C_{\mathrm{Bre}}(\mathcal H^n_g(\Sigma))^{\frac{n-1}{n}}.
\end{equation*}
In particular, $\|H\|_{L^n(\Sigma)}\geq C_{\mathrm{Bre}}$.
\end{cor}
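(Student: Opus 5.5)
The plan is to apply Theorem~\ref{theobrendle} to $\Sigma$ itself with the constant test function $u\equiv 1$, and then compare the $L^1$ and $L^n$ norms of $H$ using H\"older's inequality.

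\textbf{Step 1: the integral inequality.} Since $\Sigma$ is closed, it is a compact submanifold of $M$ with $\partial\Sigma=\emptyset$, so Theorem~\ref{theobrendle} applies to it directly. With $u\equiv1$ we have $\nabla^\Sigma u=0$, so the left-hand side of Brendle's inequality reduces to $\int_\Sigma |H|\,\mathrm d\mathcal H^n_g$. The boundary integral vanishes because $\partial\Sigma=\emptyset$. The right-hand side equals $C_{\mathrm{Bre}}(\mathcal H^n_g(\Sigma))^{\frac{n-1}{n}}$. This gives the first claim immediately.

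\textbf{Step 2: from $L^1$ to $L^n$.} H\"older's inequality with exponents $n$ and $\frac{n}{n-1}$ gives
\begin{equation*}
\int_\Sigma|H|\,\mathrm d\mathcal H^n_g\leq \|H\|_{L^n(\Sigma)}\,(\mathcal H^n_g(\Sigma))^{\frac{n-1}{n}}.
\end{equation*}
Combining this with Step 1 yields
\begin{equation*}
C_{\mathrm{Bre}}(\mathcal H^n_g(\Sigma))^{\frac{n-1}{n}}\leq \|H\|_{L^n(\Sigma)}\,(\mathcal H^n_g(\Sigma))^{\frac{n-1}{n}}.
\end{equation*}
A closed $n$-dimensional submanifold has $0<\mathcal H^n_g(\Sigma)<\infty$: the measure is finite by compactness and positive because $\Sigma$ is nonempty of dimension $n$. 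We may therefore divide by $(\mathcal H^n_g(\Sigma))^{\frac{n-1}{n}}$ and conclude $\|H\|_{L^n(\Sigma)}\geq C_{\mathrm{Bre}}$.

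\textbf{Technical point.} The only step needing care is that $u\equiv1$ satisfies the hypotheses of Theorem~\ref{theobrendle}, namely that it is positive and smooth. This is immediate. For codimension $m=1$ we use the version of the theorem already justified in the introduction via the product $M\times\mathbb R$. No genuine obstacle is expected.
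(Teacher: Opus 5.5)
Your proposal is correct and follows the paper's route: plug $u\equiv1$ into Theorem~\ref{theobrendle} on the closed submanifold, where the boundary term vanishes, and then apply H\"older's inequality to get the $L^n$ bound. Your check that $0<\mathcal H^n_g(\Sigma)<\infty$, which justifies the final division, is a small detail the paper leaves implicit.
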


Here we consider the Schwarz symmetrization used in \cite{MR4977133,MR4566705} which was inspired by the classical works \cite{MR448404,MR1704184}. For a more modern treatment, we refer the reader to the book by Kesavan \cite[Chapter 1]{MR2238193} and to the book by Lieb and Loss \cite[Chapter 3]{MR1817225}.

Let $(M,g)$ be an $(n+m)$-dimensional Riemannian manifold, and let $\Sigma\subset M$ be an $n$-dimensional submanifold. Let $u\colon\Sigma\to\mathbb R$ be a measurable function. For $t\in\mathbb R$, the superlevel set $\{u>t\}$ is defined by
\begin{equation*}
\{u>t\}=\{x\in\Sigma\colon u(x)>t\}.
\end{equation*}
The sets $\{u<t\}$, $\{u\geq t\}$, $\{u=t\}$, and similar variants are defined analogously. We say that a measurable function $u\colon\Sigma\to\mathbb R$ is \textbf{fast decaying} if $\mathcal H^n_g(\{|u|>t\})<\infty$ for every $t>0$. For such a function, we associate its \textbf{Schwarz rearrangement} $u^*\colon\mathbb R^n\to[0,\infty)$ by
\begin{equation*}
u^*(x)=u^{\#}(|B^n||x|^n),
\end{equation*}
where $u^{\#}\colon[0,\mathcal H^n_g(\Sigma)]\to[0,\infty]$ is the \textbf{one-dimensional decreasing rearrangement} of $|u|$, defined by
\begin{equation*}
\left\{\begin{array}{l}
u^{\#}(0)=\mathrm{ess\,sup}(|u|)\\
u^{\#}(s)=\inf\{t\geq0\colon\mathcal H^n_g(\{|u|>t\})<s\},\quad\forall s>0.
\end{array}\right.
\end{equation*}
The explicit formula of $u^*$ will not be needed here. For readers interested in the full set of properties that follows from this definition, we again refer to \cite[Chapter 1]{MR2238193}. The only relevant results for our purposes are that $u^*\colon\mathbb R^n\to[0,\infty]$ is radially symmetric and nonincreasing. Moreover, $|u|$ and $u^*$ are equimeasurable with respect $\mathcal H^n_g$ and $\mathcal L^n$, namely,
\begin{equation}\label{equimeasurable}
\mathcal H^n_g(\{|u|>t\})=\mathcal L^n(\{u^*>t\}),\quad\forall t\geq0,
\end{equation}
where $\mathcal L^n$ denotes the Lebesgue measure on $\mathbb R^n$.

Using \eqref{equimeasurable} together with the layer cake representation, we obtain the following proposition.
\begin{prop}\label{propeqm}
Let $(M,g)$ be an $(n+m)$-dimensional Riemannian manifold, and let $\Sigma\subset M$ be an $n$-dimensional submanifold. Let $u\colon\Sigma\to\mathbb R$ be a fast decaying function. Let $\Phi\colon[0,\infty)\to\mathbb R$ be a Borel measurable function with $\Phi(0)=0$. If $\Phi\geq0$ or $\Phi(|u|)\in L^1(\Sigma)$, then
\begin{equation*}
\int_\Sigma\Phi(|u(x)|)\mathrm d\mathcal H^n_g=\int_{\mathbb R^n}\Phi(u^*(x))\mathrm d\mathcal L^n.
\end{equation*}

In particular, given $p\in(0,\infty)$, we have
\begin{equation*}
\int_\Sigma|u|^p\mathrm d\mathcal H^n_g=\int_{\mathbb R^n}|u^*|^p\mathrm d\mathcal L^n.
\end{equation*}
\end{prop}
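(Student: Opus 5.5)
The plan is to apply the layer cake representation on both sides and use the equimeasurability identity \eqref{equimeasurable}. First consider $\Phi\geq0$ Borel with $\Phi(0)=0$. If $\Phi$ were nondecreasing and absolutely continuous, we could write $\Phi(\tau)=\int_0^\tau\Phi'(t)\,\mathrm dt$ and use Tonelli:
\begin{equation*}
\int_\Sigma\Phi(|u|)\,\mathrm d\mathcal H^n_g=\int_0^\infty\Phi'(t)\,\mathcal H^n_g(\{|u|>t\})\,\mathrm dt .
\end{equation*}
By \eqref{equimeasurable}, this equals the analogous expression for $u^*$ with $\mathcal L^n$, which proves the claim in that case. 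For a general Borel $\Phi$ I will not use monotonicity. Instead, I will argue through pushforward measures.

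Define the measures $\mu(A)=\mathcal H^n_g(\{|u|\in A\})$ and $\nu(A)=\mathcal L^n(\{u^*\in A\})$ for Borel sets $A\subset(0,\infty)$. By \eqref{equimeasurable} they agree on every interval $(t,\infty)$ with $t>0$, and those values are finite because $u$ is fast decaying. Taking differences, $\mu$ and $\nu$ agree on all intervals $(a,b]$ with $0<a<b$. These intervals form a $\pi$-system generating the Borel $\sigma$-algebra of $(0,\infty)$, and both measures are $\sigma$-finite on it, since $(0,\infty)=\bigcup_k(1/k,\infty)$ and each piece has finite measure. By Dynkin's $\pi$-$\lambda$ theorem, $\mu=\nu$ on all Borel subsets of $(0,\infty)$.

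The hypothesis $\Phi(0)=0$ disposes of the set where $|u|=0$ or $u^*=0$, whose measure may be infinite, since those points contribute nothing to either integral. There is also a possible point at infinity: $u^*$ can take the value $+\infty$, namely $u^\#(0)$ if $|u|$ is essentially unbounded. I will handle it by noting $\mathcal H^n_g(\{|u|=\infty\})=\lim_t\mathcal H^n_g(\{|u|>t\})=\lim_t\mathcal L^n(\{u^*>t\})=\mathcal L^n(\{u^*=\infty\})$, using continuity from above, which is justified by the finiteness from fast decay. I expect both sets to be null, so they can be ignored. The change of variables formula for pushforwards then gives
\begin{equation*}
\int_\Sigma\Phi(|u|)\,\mathrm d\mathcal H^n_g=\int_{(0,\infty)}\Phi\,\mathrm d\mu=\int_{(0,\infty)}\Phi\,\mathrm d\nu=\int_{\mathbb R^n}\Phi(u^*)\,\mathrm d\mathcal L^n
\end{equation*}
for $\Phi\geq0$.

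For $\Phi(|u|)\in L^1$, I split $\Phi=\Phi^+-\Phi^-$. Both parts are nonnegative Borel functions vanishing at $0$, so the previous step shows each one has equal integrals on the two sides. Hence $\Phi(u^*)\in L^1$ and the identity follows by subtraction. The particular case comes from taking $\Phi(\tau)=\tau^p$. The only delicate step is the uniqueness-of-measures argument, specifically checking $\sigma$-finiteness away from $0$, which is exactly where the fast decay hypothesis is used.
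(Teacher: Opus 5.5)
Your argument is correct. It is also more complete than what the paper gives: the paper simply asserts that the proposition follows from \eqref{equimeasurable} together with the layer cake representation.

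That justification is literally sufficient when $\Phi$ is monotone, which is your first paragraph. In that case $\{\Phi(|u|)>s\}$ is a superlevel set of $|u|$, or one writes $\Phi(\tau)=\int_0^\tau\Phi'$ and applies Tonelli. For an arbitrary Borel $\Phi$ this is no longer enough. Take for instance $\Phi(t)=t^p\log t$, which underlies the log-Sobolev corollary; its negative part is not monotone. The general layer cake formula $\int_\Sigma\Phi(|u|)\,\mathrm d\mathcal H^n_g=\int_0^\infty\mathcal H^n_g(\{|u|\in\Phi^{-1}((s,\infty))\})\,\mathrm ds$ transfers to $u^*$ only once you know that $|u|$ and $u^*$ have the same distribution on \emph{all} Borel subsets of $(0,\infty)$, not just on rays.

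Your pushforward and $\pi$--$\lambda$ step supplies exactly this fact. So your route costs a little more measure theory, but it proves the statement at the stated generality, including the $L^1$ case via $\Phi=\Phi^+-\Phi^-$. The paper's one-line route relies implicitly on the same fact.

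Two small points to tidy up.

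First, in the uniqueness theorem the exhausting sets must belong to the $\pi$-system on which the measures agree and are finite. Mere $\sigma$-finiteness of each measure separately is not the hypothesis. Take $E_k=(1/k,k]$, or enlarge the $\pi$-system by the rays $(a,\infty)$ with $a>0$; on these $\mu=\nu<\infty$ by fast decay.

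Second, the point at infinity need not be left as an expectation. Since $u$ is real-valued, $\{|u|=\infty\}=\emptyset$, so your continuity-from-above computation gives $\mathcal L^n(\{u^*=\infty\})=0$ outright; in fact $u^*$ can be infinite only at the origin. You should record this before identifying $\nu((t,\infty))$ with $\mathcal L^n(\{u^*>t\})$, since the two differ by $\mathcal L^n(\{u^*=\infty\})$.
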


Fix $1 \leq p<\infty$. We say that a function $u \in L^p(\Sigma)$ is $p$-Sobolev and write $u \in W^{1, p}(\Sigma)$ if there exists a vector valued function $\nabla^{\Sigma} u \in L^p(\Sigma)$, called weak gradient of $u$, such that
\begin{equation*}
\int_{\Sigma} u \operatorname{div}_{\Sigma} X\mathrm d\mathcal H^n_g=-\int_{\Sigma} X \cdot \nabla^{\Sigma} u\,\mathrm d\mathcal H^n_g, \quad \forall X \in \mathfrak{X}_0(\Sigma),
\end{equation*}
where $\mathfrak{X}_0(\Sigma)$ is the set of tangent vector fields on $\Sigma$ that vanish on $\partial \Sigma$. We then define the $p$-Sobolev norm $\|\cdot\|_{W^{1, p}}$ in $W^{1, p}(\Sigma)$ to be the completion of $C^\infty_0(\Sigma)$ with the following norm
\begin{equation*}
\|u\|_{W^{1, p}(\Sigma)}:=\|u\|_{L^p(\Sigma)}+\|\nabla^{\Sigma} u\|_{L^p(\Sigma)}.
\end{equation*}
Finally, the family $W_0^{1, p}(\Sigma)$ of $p$-Sobolev functions vanishing at $\partial \Sigma$ is defined as the topological closure of $C_0^{\infty}(\Sigma) \subseteq W^{1, p}(\Sigma)$ with respect to the norm $\|\cdot\|_{W^{1,p}(\Sigma)}$.

\begin{proof}[Proof of Theorem \ref{MainPS}]
Since $\|\nabla^\Sigma|u|\|_{L^p(\Sigma)}=\|\nabla^\Sigma u\|_{L^p(\Sigma)}$, we may assume $u\geq0$. Moreover, by approximation, we may also assume $u\in C^\infty_0(\Sigma)$, and by restricting $\Sigma$ to the support of $u$ we may assume that $\Sigma$ is compact (possibly with boundary $\partial\Sigma$). Using \eqref{equimeasurable}, we have
\begin{equation}\label{equimeasurable2}
\mathcal H^n_g\left(\{u>t\}\right)=\mathcal H^n_\delta\left(\{u^*>t\}\right).
\end{equation}
Let $\mathrm{Crit}(u)$ and $\mathrm{Crit}(u^*)$ denote the sets of critical points of $u$ and $u^*$, respectively. By the coarea formula,
\begin{align}
\mathcal H^n_g(\{u>t\})&=\mathcal H^n_g\left(\{u>t\}\cap \mathrm{Crit}(u)\right)+\int_{\{u>t\}\backslash\mathrm{Crit}(u)}\dfrac{|\nabla^\Sigma u|}{|\nabla^\Sigma u|}\mathrm d\mathcal H^n_g\nonumber\\
&=\mathcal H^n_g\left(\{u>t\}\cap \mathrm{Crit}(u)\right)+\int_t^\infty\int_{\{u=s\}\backslash\mathrm{Crit}(u)}\dfrac{1}{|\nabla^\Sigma u|}\mathrm d\mathcal H^{n-1}_g\mathrm ds,\label{pse0}
\end{align}
for any $t\in\mathbb R$. We claim that
\begin{equation}\label{pse1}
-\dfrac{\mathrm d}{\mathrm dt}\mathcal H^n_g(\{u>t\})=\int_{\{u=t\}}\dfrac{1}{|\nabla^\Sigma u|}\mathrm d\mathcal H^{n-1}_g,\quad\mbox{for a.e. }t\in\mathbb R,
\end{equation}
and
\begin{equation}\label{pse2}
-\dfrac{\mathrm d}{\mathrm dt}\mathcal H^n_\delta(\{u^*>t\})=\dfrac{\mathcal H^{n-1}_\delta(\{u^*=t\})}{|\nabla u^*|_t},\quad\mbox{for a.e. }t\in\mathbb R,
\end{equation}
where $|\nabla u^*|_{t}$ denotes the value $|\nabla u^*(x)|$ for any $x\in\{u^*=t\}$. 

Let us first prove \eqref{pse1}. Since $u\in C^\infty_0(\Sigma)$, the set $\mathbb R\backslash u(\mathrm{Crit}(u))$ is open. Hence the derivative of $t\mapsto\mathcal H^n_g(\{u>t\}\cap\mathrm{Crit}(u))$ is zero for every $t\notin u(\mathrm{Crit}(u))$. Because $u$ is smooth, Sard's theorem implies that $u(\mathrm{Crit}(u))$ has zero measure. Applying the Lebesgue differentiation theorem to \eqref{pse0}, we get
\begin{equation*}
-\dfrac{\mathrm d}{\mathrm dt}\mathcal H^n_g(\{u>t\})=\int_{\{u=t\}\backslash\mathrm{Crit(u)}}\dfrac{1}{|\nabla^\Sigma u|}\mathrm d\mathcal H^{n-1}_g,\quad\mbox{for a.e. }t\in\mathbb R.
\end{equation*}
This completes the proof of \eqref{pse1}. 

Now we prove \eqref{pse2}. From \eqref{equimeasurable2} and $u^*$ being radial we know that $\{u^*>t\}=B_{\rho(t)}(0)$ with
\begin{equation*}
\rho(t)=\left(\dfrac{\mathcal H^n_g(\{u>t\})}{|B^n|}\right)^{\frac1n}.
\end{equation*}
By \eqref{pse1}, $\rho'(t)$ exists a.e. $r\in\mathbb R$ and
\begin{equation}\label{rhon}
\rho'(t)=\dfrac{1}{n|B^n|\rho^{n-1}(t)}\dfrac{\mathrm d}{\mathrm dt}\mathcal H^n_g(\{u>t\})<0,\quad\mbox{a.e. }t\in\mathbb R.
\end{equation}
Let $\phi$ with $u^*(x)=\phi(|x|)$. Note that
\begin{align*}
\phi(|x|)&=u^*(x)=u^{\#}(|B^n||x|^n)=\inf\{t\geq0\colon\mathcal H^n_g(\{u>t\})<|B^n||x|^n\}\\
&=\inf\{t\geq0\colon\rho(t)<|x|\}.
\end{align*}
Since $\rho$ is nonincreasing, given $t_0$ with $\rho'(t_0)<0$ we have $\rho$ is strictly decreasing on a neighborhood of $t_0$. By the equation above, $\phi(\rho(t_0))=t_0$. Using \eqref{rhon}, we obtain
\begin{equation*}
|\nabla u^*|_t=|\nabla u^*(x)|=\left|\phi'(|x|)\right|=\dfrac{1}{|\rho'(t)|}\quad\mbox{a.e. }t\in\mathbb R
\end{equation*}
where $x\in \mathbb R^n$ is any point with $u^*(x)=t$. Then, using that $\mathcal H_\delta^n(\{u^*>t\})=|B^n|\rho^n(t)$ and $\mathcal H^{n-1}_\delta(\{u^*=t\})=n|B^n|\rho^{n-1}(t)$,
\begin{equation*}
-\dfrac{\mathrm d}{\mathrm dt}\mathcal H^n_\delta(\{u^*>t\})=-n|B^n|\rho^{n-1}(t)\rho'(t)=\dfrac{\mathcal H^{n-1}_\delta(\{u^*=t\})}{|\nabla u^*|_t},\quad\mbox{a.e. }t\in\mathbb R.
\end{equation*}
This completes the proof of \eqref{pse2}.

Using the radial symmetry of $u^*$ and the fact that the balls attain equality in the Euclidean isoperimetric inequality, we have
\begin{equation*}
\mathcal H^{n-1}_\delta(\{u^*=t\})=n|B^n|^{\frac1n}\left(\mathcal H^n_\delta(\{u^*>t\})\right)^{\frac{n-1}n},\quad\forall t\notin u^*(\mathrm{Crit}(u^*)).
\end{equation*}
Here we used that $\{u^*=t\}$ is an $(n-1)$-dimensional manifold whenever $t$ is a regular value. By combining \eqref{equimeasurable2} with the isoperimetric inequality in Proposition \ref{propIP}, we obtain
\begin{equation}\label{1stEst}
\mathcal H^{n-1}_\delta(\{u^*=t\})=n|B^n|^{\frac1n}\left(\mathcal H^n_g(\{u>t\})\right)^{\frac{n-1}n}\leq n|B^n|^{\frac1n}C_{\mathrm{IP}}\mathcal H^{n-1}_g(\{u=t\}),
\end{equation}
for all $t\notin u(\mathrm{Crit}(u))\cup u^*(\mathrm{Crit}(u^*))$. For now, let us assume $p>1$. Applying H\"older's inequality and \eqref{pse1}, we get
\begin{align}
\mathcal H^{n-1}_\delta(\{u^*=t\})&\leq n|B^n|^{\frac1n}C_{\mathrm{IP}}\int_{\{u=t\}}\dfrac{1}{|\nabla^\Sigma u|^{\frac{p-1}p}}|\nabla^\Sigma u|^{\frac{p-1}p}\mathrm d\mathcal H^{n-1}_g \nonumber\\
&\leq n|B^n|^{\frac1n}C_{\mathrm{IP}}\left[-\dfrac{\mathrm d}{\mathrm dt}\mathcal H^n_g(\{u>t\})\right]^{\frac{p-1}p}\left(\int_{\{u=t\}}|\nabla^\Sigma u|^{p-1}\mathrm d\mathcal H^{n-1}_g\right)^{\frac1p}\label{2ndEst},
\end{align}
for a.e. $t\in\mathbb R$. Using \eqref{equimeasurable2} and \eqref{pse2}, we deduce
\begin{align*}
\int_{\{u=t\}}|\nabla^\Sigma u|^{p-1}\mathrm d\mathcal H^{n-1}_g&\geq\left[\left(n|B^n|^{\frac1n}C_{\mathrm{IP}}\right)^{-1}\mathcal H^{n-1}_\delta(\{u^*=t\})\right]^{p}\left[-\dfrac{\mathrm d}{\mathrm dt}\mathcal H^n_\delta(\{u^*>t\})\right]^{1-p}\\
&=\left(n|B^n|^{\frac1n}C_{\mathrm{IP}}\right)^{-p}|\nabla u^*|_t^{p-1}\mathcal H^{n-1}_\delta(\{u^*=t\})\\
&=\left(n|B^n|^{\frac1n}C_{\mathrm{IP}}\right)^{-p}\int_{\{u^*=t\}}|\nabla u^*|^{p-1}\mathrm d\mathcal H^{n-1}_\delta,
\end{align*}
for a.e. $t\in\mathbb R$. We now apply the coarea formula once more in the following way:
\begin{align*}
\int_\Sigma|\nabla^\Sigma u|^p\mathrm d\mathcal H^{n}_g&=\int_{-\infty}^\infty\int_{\{u=s\}}|\nabla^\Sigma u|^{p-1}\mathrm d\mathcal H^{n-1}_g\mathrm ds\\
&\geq\left(n|B^n|^{\frac1n}C_{\mathrm{IP}}\right)^{-p}\int_{-\infty}^\infty\int_{\{u^*=s\}}|\nabla u^*|^{p-1}\mathrm d\mathcal H^{n-1}_\delta\mathrm ds\\
&=\left(n|B^n|^{\frac1n}C_{\mathrm{IP}}\right)^{-p}\int_{\mathbb R^n}|\nabla u^*|^p\mathrm d\mathcal H^{n}_\delta.
\end{align*}
This concludes the case $p>1$. The case $p=1$ is easier and is a straightforward consequence of~\eqref{1stEst} and the coarea formula. Therefore, we finish the proof.
\end{proof}
\begin{remark}\label{remarkequality}
    We now analyze the equality case in the above P\'{o}lya–Szeg\"{o} inequality for $p>1$. Indeed, it occurs if and only if equality is attained simultaneously in estimates \eqref{1stEst} and \eqref{2ndEst}. For \eqref{1stEst}, Proposition \ref{propIP} implies that equality holds only if the superlevel set $\{u>t\}$ is a flat round $n$-ball and the ambient manifold $M$ is isometric to $\mathbb{R}^{n+m}$, $m\in\{1,2\}$. On the other hand, equality in \eqref{2ndEst} is achieved if and only if $|\nabla^\Sigma u|$ is constant on the level set $\{u=t\}$. Therefore, the equality holds precisely when $M$ is isometric to $\mathbb{R}^{n+m}$, $\Sigma \subset \mathbb{R}^{n+m}$ is a flat round $n$-ball, and $u$ is a radial function on $\Sigma$, that is, $$u(x)=h(d(x,o)),$$ 
    where $h:[0,\infty)\rightarrow \mathbb{R}$ is a $W^{1,p}$-function and $d$ denotes the geodesic distance on $\Sigma$ from a fixed pole $o$. In this situation, the optimal constant is given by 
    $$C_{\mathrm{PS}}=\dfrac{n|B^n|^{\frac1n}}{n\,\mathrm{AVR}_g^{\frac1n}|B^n|^{\frac1n}-\|H\|_{L^n(\Sigma)}}=1.$$

\end{remark}

\begin{proof}[Proof of Proposition \ref{propCPS}]
Since $C_{\mathrm{PS}}$ does not change for both $m=1$ and $m=2$ and the identity $(n+2)|B^{n+2}|=2|B^2||B^n|$ holds, we may assume $m\geq2$ and
\begin{equation*}
C_{\mathrm{PS}}=\dfrac{n|B^n|^{\frac1n}}{n\mathrm{AVR}_g^{\frac{1}{n}}\left(\frac{(n+m)|B^{n+m}|}{m|B^m|}\right)^{\frac1n}-\|H\|_{L^n(\Sigma)}}.
\end{equation*}
Note that
\begin{equation}\label{peq1}
C_{\mathrm{PS}}\geq\dfrac{n|B^n|^{\frac1n}}{n\mathrm{AVR}_g^{\frac{1}{n}}\left(\frac{(n+m)|B^{n+m}|}{m|B^m|}\right)^{\frac1n}}.
\end{equation}
Moreover, since $\mathrm{AVR}_g\leq1$, we also have
\begin{equation}\label{peq2}
\dfrac{n|B^n|^{\frac1n}}{n\mathrm{AVR}_g^{\frac{1}{n}}\left(\frac{(n+m)|B^{n+m}|}{m|B^m|}\right)^{\frac1n}}\geq\dfrac{n|B^n|^{\frac1n}}{n\left(\frac{(n+m)|B^{n+m}|}{m|B^m|}\right)^{\frac1n}}.
\end{equation}
Then,
\begin{equation*}
C_{\mathrm{PS}}\geq\left(\dfrac{m|B^n||B^m|}{(n+m)|B^{n+m}|}\right)^{\frac{1}{n}}.
\end{equation*}
Using the formula for the volume of a unit Euclidean ball,
\begin{equation*}
|B^n|=\dfrac{\pi^{\frac{n}{2}}}{\Gamma\left(\frac{n}{2}+1\right)},
\end{equation*}
where $\Gamma$ is the gamma function, we obtain
\begin{equation*}
C_{\mathrm{PS}}\geq\left(\dfrac{m\Gamma\left(\frac{n+m}2+1\right)}{(n+m)\Gamma\left(\frac{n}2+1\right)\Gamma\left(\frac{m}2+1\right)}\right)^{\frac1n}.
\end{equation*}
Equivalently,
\begin{equation*}
C_{\mathrm{PS}}\geq\left(\dfrac{2m\Gamma\left(\frac{n+m}2+2\right)}{(n+m)(n+m+2)\Gamma\left(\frac{n}2+1\right)\Gamma\left(\frac{m}2+1\right)}\right)^{\frac1n}.
\end{equation*}
Then, $C_{\mathrm{PS}}\geq1$ is a consequence of the following claim
\begin{equation}\label{peq3}
\beta\left(\frac{n}2+1,\frac{m}2+1\right)\leq\dfrac{2m}{(n+m)(n+m+2)},
\end{equation}
where $\beta$ is the beta function, defined by
\begin{equation*}
\beta(x,y)=\dfrac{\Gamma(x)\Gamma(y)}{\Gamma(x+y)}=\int_0^1t^{x-1}(1-t)^{y-1}\mathrm dt,\quad\forall x,y>0.
\end{equation*}

In order to prove the inequality \eqref{peq3}, we use the identities $\beta(x,y+1)=\frac{y}{x+y}\beta(x,y)$ and $\beta(x+1,y)=\frac{x}{x+y}\beta(x,y)$, which gives
\begin{align*}
\beta\left(\frac{n}2+1,\frac{m}2+1\right)&=\dfrac{nm}{(n+m)(n+m+2)}\beta\left(\frac{n}2,\frac{m}2\right)\\
&=\dfrac{nm}{(n+m)(n+m+2)}\int_0^1t^{\frac{n}{2}-1}(1-t)^{\frac{m}{2}-1}\mathrm dt.
\end{align*}
Since $m\geq2$, it follows that
\begin{equation}\label{peq4}
(1-t)^{\frac{m}{2}-1}\leq1,\quad\forall t\in(0,1).
\end{equation}
Hence,
\begin{equation*}
\beta\left(\frac{n}2+1,\frac{m}2+1\right)\leq \dfrac{nm}{(n+m)(n+m+2)}\int_0^1t^{\frac{n}{2}-1}\mathrm dt=\dfrac{2m}{(n+m)(n+m+2)}.
\end{equation*}
This completes the proof of \eqref{peq3}, and thus we have $C_{\mathrm{PS}}\geq1$.

Finally, we verify that $C_{\mathrm{PS}}=1$ if and only if $\mathrm{AVR}_g=1$, $m=1$ or $m=2$, and $\Sigma$ is minimal. Under these conditions, using the identity $(n+2)|B^{n+2}|=2|B^2||B^n|$, one directly checks that $C_{\mathrm{PS}}=1$. Conversely, if $C_{\mathrm{PS}}=1$, all the inequalities above must be equalities. In particular, \eqref{peq1} implies that $\Sigma$ is minimal, \eqref{peq2} guarantees that $\mathrm{AVR}_g=1$, and \eqref{peq4} obtains that $m=2$ (within the case $m\geq2$ treated here; the case $m=1$ is the remaining possibility and is checked directly from the first branch of \eqref{CPS}). This completes the proof.
\end{proof}

\section{Sobolev, log-Sobolev, Gagliardo--Nirenberg, and Hardy Inequalities}\label{sec3}

In this section, we show how the P\'olya--Szeg\"o inequality obtained above transfers several sharp Euclidean functional inequalities to the submanifold setting. The first three consequences (Sobolev, logarithmic Sobolev, and Gagliardo--Nirenberg inequalities) follow directly from the equimeasurability of Schwarz rearrangements (see Proposition~\ref{propeqm}) together with Theorem~\ref{MainPS}. The Hardy inequality requires one additional ingredient, since the singular weight depending on the intrinsic distance to a fixed point cannot be transported by equimeasurability alone. We begin with the Sobolev inequality. Recall that Talenti~\cite{MR0463908} proved the sharp Euclidean estimate:
$$\|v\|_{L^{p^*}(\mathbb{R}^n)} \leq TA(n,p) \|\nabla v\|_{L^p(\mathbb{R}^n)},$$
for all $v \in W^{1,p}(\mathbb{R}^n)$, with $1<p<n$ and the sharp constant given by
$$TA(n,p)=\dfrac{1}{\sqrt{\pi}n^{\frac1p}}\left(\dfrac{p-1}{n-p}\right)^{1-\frac{1}{p}}\left(\dfrac{\Gamma (1+\frac{n}{2})\Gamma(n)}{\Gamma(\frac{n}{p})\Gamma(1+n-\frac{n}{p})}\right)^{\frac{1}{n}}.$$
As a direct consequence of the classical Sobolev inequality and Theorem \ref{MainPS}, we obtain a Sobolev inequality on submanifolds:

\begin{proof}[Proof of Corollary \ref{corSob}]
    It follows directly from the P\'{o}lya--Szeg\"{o} inequality and Talenti's Sobolev embedding on the Euclidean space. More clearly,
    \begin{align*}
        \|u\|_{L^{p^*}(\Sigma)}=\|u^*\|_{L^{p^*}(\mathbb{R}^n)} \leq TA(n,p) \|\nabla u^*\|_{L^p(\mathbb{R}^n)}\leq TA(n,p)C_{\mathrm{PS}}\|\nabla^{\Sigma}u\|_{L^p(\Sigma)},
    \end{align*}
    as desired.
\end{proof}

Now, we prove the Corollary \ref{corLog} about the log-Sobolev inequality on submanifolds, which is obtained in the same way as the previous one.
\begin{proof}[Proof of Corollary \ref{corLog}]
By density, it is enough to prove the result for $u\in C^\infty_0(\Sigma)$ with $\|u\|_{L^p(\Sigma)}=1$. First, we have
\[
\int_{\Sigma} |u|^p\log |u|\mathrm d\mathcal H^n_g=\int_{\mathbb R^n} |u^*|^p\log |u^*|\mathrm dx,
\]
for all $p \geq 1$ (see \cite{do2026logarithmicsobolevpoincarebeckner} for details). Then, by applying the $L^p$ log-Sobolev inequality in the Euclidean space $\mathbb{R}^n$ to $u^*$ (see \cite{MR1957678}), we get
\begin{equation*}
\int_{\Sigma} |u|^p\log|u|\mathrm d\mathcal{H}^n_g\leq \dfrac{n}{p^2} \ln \left[\mathcal{L}_{n,p} \int_{\mathbb{R}^n}|\nabla u^*|^p\mathrm dx\right]\leq \dfrac{n}{p^2}\ln\left[\mathcal{L}_{n,p}C_{\mathrm{PS}}^p\int_{\Sigma}|\nabla^\Sigma u|^p\mathrm d\mathcal{H}^n_g\right],
\end{equation*}
Here, the last inequality follows from Theorem~\ref{MainPS}. This completes the proof.
\end{proof}

We now focus on proving Corollary \ref{cor13}. We begin by recalling the classical Gagliardo--Nirenberg inequality in the Euclidean space; see \cite[Theorems 1 and 2]{MR1940370}.
\begin{theoremletter}[Gagliardo--Nirenberg inequality on the Euclidean space]
Assume that $1<p<n$ and $1<q<\frac{p(n-1)}{n-p}$. Then the following statements hold.\label{GNE}

\smallskip

\noindent (i) If $q>p$, then for every $u\in C^\infty_0(\mathbb R^n)$,
\begin{equation*}
\|u\|_{L^{p\frac{q-1}{p-1}}(\mathbb R^n)}\leq \mathrm{GN}_1(n,p,q)\|\nabla u\|^\theta_{L^p(\mathbb R^n)}\|u\|_{L^q(\mathbb R^n)}^{1-\theta},
\end{equation*}
where $\theta=\frac{(q-p)n}{(q-1)(np+pq-nq)}$ and the sharp constant $\mathrm{GN}_1=\mathrm{GN}_1(n,p,q)$ is
\begin{equation*}
\mathrm{GN}_1\!=\!\left(\dfrac{q-p}{p\sqrt\pi}\right)^\theta\!\left(\dfrac{pq}{n(q-p)}\right)^{\frac{\theta}{p}}\!\left(\dfrac{np+pq-nq}{pq}\right)^{\frac{p-1}{p(q-1)}}\!\left(\dfrac{\Gamma\left(q\frac{p-1}{q-p}\right)\Gamma\left(\frac{n}{2}+1\right)}{\Gamma\left(\frac{p-1}{p}\frac{np+pq-nq}{q-p}\right)\!\Gamma\left(n\frac{p-1}{p}+1\right)}\!\right)^{\frac{\theta}{n}}\!.
\end{equation*}

\smallskip

\noindent (ii) If $q<p$, then for every $u\in C^\infty_0(\mathbb R^n)$,
\begin{equation*}
\|u\|_{L^{q}(\mathbb R^n)}\leq \mathrm{GN}_2(n,p,q)\|\nabla u\|^\theta_{L^p(\mathbb R^n)}\|u\|_{L^{p\frac{q-1}{p-1}}(\mathbb R^n)}^{1-\theta},
\end{equation*}
where $\theta=\frac{(p-q)n}{q(np+pq-nq-p)}$ and the sharp constant $\mathrm{GN}_2=\mathrm{GN}_2(n,p,q)$ is
\begin{equation*}
\mathrm{GN}_2\!=\!\left(\!\dfrac{p-q}{p\sqrt\pi}\!\right)^\theta\!\left(\!\dfrac{pq}{n(p-q)}\!\right)^{\frac{\theta}{p}}\!\left(\!\dfrac{pq}{np+pq-nq}\!\right)^{\frac{1-\theta}{p}\frac{p-1}{q-1}}\!\left(\!\dfrac{\Gamma\left(\frac{p-1}{p}\frac{np+pq-nq}{p-q}+1\right)\Gamma\left(\frac{n}{2}+1\right)}{\Gamma\left(q\frac{p-1}{p-q}+1\right)\Gamma\left(n\frac{p-1}{p}+1\right)}\!\right)^{\frac{\theta}{n}}\!.
\end{equation*}
\end{theoremletter}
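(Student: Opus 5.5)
Since Theorem~\ref{GNE} is a purely Euclidean statement, I would not use the submanifold machinery. The plan is to reproduce the mass-transportation proof of Cordero-Erausquin, Nazaret and Villani, which recovers exactly the sharp constants of Del Pino and Dolbeault.

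\textbf{Reduction and the transport inequality.} First, by the classical Euclidean P\'olya--Szeg\"o inequality (Theorem~\ref{MainPS} with $M=\Sigma=\mathbb R^n$, where $C_{\mathrm{PS}}=1$) and equimeasurability (Proposition~\ref{propeqm}), it suffices to treat nonnegative radial nonincreasing $u$. This step is optional but simplifies regularity.

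For case (i), set $a=p\frac{q-1}{p-1}$, so the left-hand norm is $\|u\|_{L^{a}}$. Consider the probability densities $F=u^{a}/\|u\|_a^a$ and $G=h^{a}/\|h\|_a^a$, where $h(y)=(1+|y|^{p'})^{-\frac{p-1}{q-p}}$ is the conjectured extremal and $p'=\frac{p}{p-1}$. Let $T=\nabla\varphi$ be the Brenier map pushing $F$ forward to $G$, so that $F(x)=G(\nabla\varphi(x))\det D^2_A\varphi(x)$ a.e., with $D^2_A\varphi$ the Alexandrov Hessian.

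Take $\gamma=1-\frac1n$ in case (i); the exponents are arranged so that $\int u^{q}$ appears as $\int F^{\gamma}$-type quantities. Change variables and apply the arithmetic--geometric mean inequality $(\det D^2_A\varphi)^{1/n}\le \frac1n\Delta_A\varphi\le\frac1n\Delta_{\mathcal D'}\varphi$. This gives
\begin{equation*}
\int G^{\gamma}\le \frac1n\int F^{\gamma}\Delta\varphi .
\end{equation*}
Integrate by parts to move the derivative onto $F^{\gamma}$, producing $-\frac1n\int \nabla(F^\gamma)\cdot\nabla\varphi$. Writing $\nabla (F^{\gamma})$ in terms of $u^{q-1}\nabla u$ (this is where $a$ and $q$ are matched), H\"older's inequality with exponents $p,p'$ gives the bound $C\|\nabla u\|_{L^p}\left(\int|\nabla\varphi|^{p'}F\right)^{1/p'}$. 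The last factor equals $\left(\int|y|^{p'}G\right)^{1/p'}$, which depends only on $h$.

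\textbf{Optimization and equality.} The outcome is an inequality of the form
\begin{equation*}
A(h)\le B\,\|u\|_a^{-c_1}\,\|u\|_q^{c_2}\,\|\nabla u\|_p .
\end{equation*}
The constants $c_1,c_2$ come from the normalizations, and $B$ depends only on $h$. Every step is an equality when $u=h$: then $\varphi=|x|^2/2$, AM--GM is sharp, and H\"older is sharp because $\nabla h^{\gamma}$ is proportional to $|x|^{p'-2}x\,h^{\ldots}$. Hence the resulting constant is optimal.

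It remains to remove the residual dependence on scaling. Replace $u$ by $\lambda^{n/a}u(\lambda\,\cdot)$ and optimize over $\lambda$, or equivalently combine the inequality with its homogeneity. This yields the multiplicative form with exponent $\theta=\frac{(q-p)n}{(q-1)(np+pq-nq)}$. Evaluating the Beta/Gamma integrals $\int h^{a}$, $\int h^{q}$ and $\int|\nabla h|^p$ in polar coordinates then produces the stated $\mathrm{GN}_1$. The restriction $q<\frac{p(n-1)}{n-p}$ is exactly what makes these integrals finite and $h$ admissible.

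Case (ii), $q<p$, uses the same scheme with $h(y)=(1-|y|^{p'})_+^{\frac{p-1}{p-q}}$, which has compact support. The roles of the two norms are swapped, and $\gamma$ is chosen so that the functional $\int\rho^{\gamma}$ is still displacement concave, i.e.\ $\gamma\ge1-\frac1n$. Computing the corresponding integrals gives $\mathrm{GN}_2$.

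\textbf{Main obstacle.} The delicate step is the integration by parts. The Brenier potential $\varphi$ is only convex, so one needs McCann's regularity: the Alexandrov Laplacian is dominated by the distributional Laplacian, and the boundary terms vanish. The latter holds because $G$ is supported on a convex set and $\nabla\varphi$ takes values in $\overline{\operatorname{supp}G}$. I would first prove the inequality for compactly supported smooth $u$, where these justifications are standard, and then pass to general $u$ by density. A secondary but tedious point is the bookkeeping of exponents needed to check that the constants reduce exactly to $\mathrm{GN}_1$ and $\mathrm{GN}_2$.
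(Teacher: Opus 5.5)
The paper does not prove Theorem~\ref{GNE}; it quotes it from Del Pino--Dolbeault \cite{MR1940370}. Reproving it with the Cordero-Erausquin--Nazaret--Villani transport argument is a legitimate alternative, but the central step of your sketch does not work as written.

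With $\gamma=1-\frac1n$ you have $F^{\gamma}=u^{a\frac{n-1}{n}}/\|u\|_{L^a}^{a\frac{n-1}{n}}$. The identity $a\frac{n-1}{n}=q$ holds only at the excluded endpoint $q=\frac{p(n-1)}{n-p}$, where $a=p^*$, so what you wrote is the transport proof of the Sobolev inequality. For $q$ in the open range, $\nabla(F^\gamma)$ is not a multiple of $u^{q-1}\nabla u$. Consequently H\"older does not return $\int|\nabla\varphi|^{p'}F$, and $\|u\|_{L^q}$ never enters the argument; the normalizations only produce powers of $\|u\|_{L^a}$.

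If you instead take the exponent that does match, $\gamma=\frac qa=\frac{q(p-1)}{p(q-1)}\in(1-\frac1n,1)$, then your inequality $\int G^\gamma\le\frac1n\int F^\gamma\Delta\varphi$ is false. For $\nabla\varphi(x)=\lambda x$ and $G=\lambda^{-n}F(\cdot/\lambda)$ it reads $\lambda^{n(1-\gamma)}\le\lambda$, which fails for $\lambda<1$. Relatedly, a purely multiplicative inequality leaves nothing to optimize over dilations: it is either already dilation invariant or false.

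The missing idea is the affine (tangent-line) form of displacement concavity. Set $s=n(1-\gamma)\in(0,1)$ and use $t^s\le1-s+st$ together with $(\det D^2_A\varphi)^{1/n}\le\frac1n\Delta_A\varphi$:
\begin{equation*}
\int G^{\gamma}=\int F^{\gamma}\bigl(\det D^2_A\varphi\bigr)^{1-\gamma}\le\bigl(1-n(1-\gamma)\bigr)\int F^{\gamma}+(1-\gamma)\int F^{\gamma}\Delta_A\varphi .
\end{equation*}
After integration by parts and H\"older (using $(q-1)p'=a$), this becomes
\begin{equation*}
\frac{\|h\|_{L^q}^q}{\|h\|_{L^a}^q}\le\bigl(1-n(1-\gamma)\bigr)\frac{\|u\|_{L^q}^q}{\|u\|_{L^a}^q}+(1-\gamma)\,q\Bigl(\int|y|^{p'}G\,\mathrm dy\Bigr)^{\frac1{p'}}\frac{\|\nabla u\|_{L^p}}{\|u\|_{L^a}}.
\end{equation*}
The additive term is where $\|u\|_{L^q}$ comes from. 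Minimizing the right-hand side over $u_\lambda=\lambda^{n/a}u(\lambda\,\cdot)$, which fixes $\|u\|_{L^a}$, is what produces the exponent $\theta$.

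This also corrects your account of the range. The integrals $\int h^a$, $\int h^q$, $\int|\nabla h|^p$ and $\int|y|^{p'}G$ are finite for every $q<p^*=\frac{np}{n-p}$. The restriction $q\le\frac{p(n-1)}{n-p}$ is exactly McCann's condition $\gamma\ge1-\frac1n$.

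In case (ii) you have $\gamma=\frac qa>1$, so $\int\rho^\gamma$ is displacement convex, not concave. There one uses
\begin{equation*}
\bigl(\det D^2_A\varphi\bigr)^{1-\gamma}\ge1+n(\gamma-1)-(\gamma-1)\Delta_A\varphi .
\end{equation*}
This reverses the inequality and bounds $\|u\|_{L^q}^q$ from above by the same kind of affine expression, after which the scaling optimization goes through as before.
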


The preceding Euclidean inequalities now transfer directly to $\Sigma$. The only change in the constants comes from the P\'olya--Szeg\"o constant, and it appears with exponent $\theta$, since the gradient norm enters the Gagliardo--Nirenberg inequalities with this exponent.

\begin{proof}[Proof of Corollary \ref{cor13}]
Assume first that the conditions in \textit{(i)} hold. By Proposition~\ref{propeqm}, Theorem~\ref{GNE}, and Theorem~\ref{MainPS}, we obtain
\begin{equation*}
\|u\|_{L^{p\frac{q-1}{p-1}}(\Sigma)}=\|u^*\|_{L^{p\frac{q-1}{p-1}}(\mathbb R^n)}\leq\mathrm{GN}_1\|\nabla u^*\|_{L^p(\mathbb R^n)}^\theta\|u^*\|^{1-\theta}_{L^q(\mathbb R^n)}\leq\mathrm{GN}_1C_{\mathrm{PS}}^\theta\|\nabla^\Sigma u\|^\theta_{L^p(\Sigma)}\|u\|^{1-\theta}_{L^q(\Sigma)}.
\end{equation*}

The proof of \textit{(ii)} follows in the same way. Indeed, using the same results, we have
\begin{equation*}
\|u\|_{L^{q}(\Sigma)}=\|u^*\|_{L^{q}(\mathbb R^n)}\leq\mathrm{GN}_2\|\nabla u^*\|_{L^p(\mathbb R^n)}^\theta\|u^*\|^{1-\theta}_{L^{p\frac{q-1}{p-1}}(\mathbb R^n)}\leq\mathrm{GN}_2C_{\mathrm{PS}}^\theta\|\nabla^\Sigma u\|^\theta_{L^p(\Sigma)}\|u\|^{1-\theta}_{L^{p\frac{q-1}{p-1}}(\Sigma)}.
\end{equation*}
This proves the corollary.
\end{proof}

We now turn to the Hardy inequality. This part is slightly different from the previous applications. The Sobolev, logarithmic Sobolev, and Gagliardo--Nirenberg inequalities involve only norms of $u$ and $\nabla^\Sigma u$, and hence they follow from Proposition~\ref{propeqm} and Theorem~\ref{MainPS}. On the other hand, the Hardy inequality contains the weighted term
\begin{equation*}
\int_\Sigma \frac{|u|^p}{d(x,o)^p}\mathrm d\mathcal H_g^n. 
\end{equation*}
Thus, to compare it with its Euclidean counterpart, we also need a rearrangement estimate for products and a comparison for the rearrangement of the distance weight.

In \cite{MR4435963}, Cabr\'{e} and Miraglio established Hardy and Hardy--Sobolev inequalities on hypersurfaces of $\mathbb{R}^{n+1}$. Both inequalities involve a mean curvature term that captures the geometric features of the hypersurface. We note that the P\'{o}lya–Szeg\"{o} inequality we have established applies to submanifolds of arbitrary codimension. Consequently, under an appropriate bound on the mean curvature, we are able to derive Hardy and Hardy–Littlewood inequalities on more general submanifolds, not restricted to codimension one. The assumption $\mathrm{Ric}_\Sigma\geq0$ will be used to compare the intrinsic volume growth of geodesic balls in $\Sigma$ with the Euclidean one. We first recall the Hardy--Littlewood rearrangement inequality in our setting.

\begin{lemma}[Hardy--Littlewood inequality]\label{lemmaHL}
    Suppose that $f$ and $g$ are two non-negative functions. Then,
    $$\int_{\Sigma} f(x)g(x)\mathrm d\mathcal{H}^n_g \leq \int_{\mathbb{R}^n}f^*(x)g^*(x)\mathrm d\mathcal{L}^n.$$
\end{lemma}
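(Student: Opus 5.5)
The plan is to reduce the Hardy--Littlewood inequality to the equimeasurability \eqref{equimeasurable} via the layer cake representation. We may assume $f$ and $g$ are fast decaying, so that $f^*$ and $g^*$ are defined through the construction of Section~\ref{sec2}. If, for instance, $f$ is not fast decaying, then $\mathcal H^n_g(\{f>t\})=\infty$ for some $t>0$. With the natural convention $f^*\equiv u^\#$ extended, the right-hand side is then infinite whenever $g\not\equiv0$, so the inequality is trivial; we will note this briefly.

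First, write $f(x)=\int_0^\infty \chi_{\{f>s\}}(x)\,\mathrm ds$ and $g(x)=\int_0^\infty\chi_{\{g>t\}}(x)\,\mathrm dt$. Multiplying these and applying Tonelli's theorem (everything is nonnegative) gives
\begin{equation*}
\int_\Sigma fg\,\mathrm d\mathcal H^n_g=\int_0^\infty\int_0^\infty \mathcal H^n_g\big(\{f>s\}\cap\{g>t\}\big)\,\mathrm ds\,\mathrm dt .
\end{equation*}
The same computation on $\mathbb R^n$ yields
\begin{equation*}
\int_{\mathbb R^n}f^*g^*\,\mathrm d\mathcal L^n=\int_0^\infty\int_0^\infty \mathcal L^n\big(\{f^*>s\}\cap\{g^*>t\}\big)\,\mathrm ds\,\mathrm dt .
\end{equation*}
It therefore suffices to compare the integrands pointwise in $(s,t)$.

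For the pointwise comparison, note that $\mathcal H^n_g(A\cap B)\le\min\{\mathcal H^n_g(A),\mathcal H^n_g(B)\}$. By \eqref{equimeasurable}, $\mathcal H^n_g(\{f>s\})=\mathcal L^n(\{f^*>s\})$, and similarly for $g$. Since $f^*$ and $g^*$ are radial and nonincreasing, their superlevel sets are balls centered at the origin (open or closed). Two such balls are nested, so $\mathcal L^n(\{f^*>s\}\cap\{g^*>t\})=\min\{\mathcal L^n(\{f^*>s\}),\mathcal L^n(\{g^*>t\})\}$. Combining these three facts gives the pointwise inequality, and integrating over $(s,t)$ concludes the proof.

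The only step needing care is the claim that the superlevel sets of $u^*$ are centered balls, including the boundary behaviour. This follows from the definition $u^*(x)=u^\#(|B^n||x|^n)$, since $u^\#$ is nonincreasing, so $\{u^*>s\}$ is $\{|x|<r\}$ or $\{|x|\le r\}$ for some $r\in[0,\infty]$. Any two such sets are nested regardless of which form they take. I expect no real obstacle beyond this bookkeeping and the fast-decay reduction.
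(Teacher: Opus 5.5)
Your proof is correct and is the paper's argument: the layer-cake representation plus Tonelli, followed by the pointwise bound $\mathcal H^n_g(\{f>s\}\cap\{g>t\})\le\mathcal L^n(\{f^*>s\}\cap\{g^*>t\})$. The paper asserts that bound without comment, and you justify it correctly via the minimum of the two measures, equimeasurability, and the nesting of centered balls.

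The only slip is your aside on non-fast-decaying $f$: the right-hand side need not be infinite. For example, $f\equiv1$ on an infinite-volume $\Sigma$ gives $f^*\equiv1$ and $\int_{\mathbb R^n}f^*g^*\,\mathrm d\mathcal L^n=\int_\Sigma g\,\mathrm d\mathcal H^n_g<\infty$ for integrable $g$. You do not need that case anyway. Fast decay is implicit in $f^*$ being defined, and in any event \eqref{equimeasurable} still holds when the measures are infinite, so your main argument covers it unchanged.
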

\begin{proof}
    From the layer-cake representation, we have
    \begin{align*}
        \int_{\Sigma} f(x)g(x)\mathrm d\mathcal{H}^n_g&=\int_\Sigma \int_0^\infty \int_0^\infty \chi_{\{f(x)>s\}}\chi_{\{g(x)>t\}}\mathrm ds\mathrm dt\mathrm d\mathcal{H}^n_g\\
        &=\int_0^\infty \int_0^\infty \mathcal H^n_g\left(\{f(x)>s\} \cap \{g(x)>t\}\right)\mathrm ds\mathrm dt\\
        &\leq \int_0^\infty \int_0^\infty\mathcal L^n\left(\{f^*(x)>s\} \cap \{g^*(x)>t\}\right)\mathrm ds\mathrm dt\\
        &=\int_{\mathbb{R}^n} \int_0^\infty \int_0^\infty \chi_{\{f^*(x)>s\}}\chi_{\{g^*(x)>t\}}\mathrm ds\mathrm dt\mathrm d\mathcal{L}^n\\
        &=\int_{\mathbb{R}^n}f^*(x)g^*(x)\mathrm d\mathcal{L}^n,
    \end{align*}
    as desired.
\end{proof}
\begin{remark}
    In the above lemma, when $\Sigma$ is bounded, the rearrangements $f^*$ and $g^*$ are originally defined on the Euclidean ball $B\subset\mathbb{R}^n$ satisfying $|B|=|\Sigma|$. By extending $f^*$ and $g^*$ by zero outside $B$, they may be viewed as equimeasurable functions on the whole space $\mathbb{R}^n$. Consequently, the Hardy-Littlewood inequality may be expressed equivalently as an integral over $\mathbb{R}^n$, as stated above.
\end{remark}

The second lemma we need uses the fact that the Schwarz rearrangement commutes with nondecreasing functions.
\begin{lemma}
    Let $u:\Sigma\to[0,\infty]$ be measurable and fast decaying, and let $F:[0,\infty)\to[0,\infty)$ be nondecreasing,left-continuous, and satisfy $F(0)=0$. Then
    $$F(u^*)=(F(u))^*.$$
\end{lemma}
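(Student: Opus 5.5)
The plan is to reduce the identity to an equality of distribution functions: both $F(u^*)$ and $(F(u))^*$ are radially symmetric and nonincreasing on $\mathbb R^n$. Recall that $v^*(x)=\inf\{t\geq0\colon \mathcal L^n(\{v^*>t\})<|B^n||x|^n\}$, so $v^*$ is determined by its distribution function. Two radial nonincreasing functions given by this formula (i.e.\ normalized through $u^{\#}$) therefore coincide once their superlevel sets have equal measure for every level. So it suffices to show that $F(u^*)$ is itself of the form $w^{\#}(|B^n||x|^n)$ and that it is equimeasurable with $F(u)$. The latter means
\begin{equation*}
\mathcal L^n(\{F(u^*)>s\})=\mathcal H^n_g(\{F(u)>s\}),\quad\forall s\geq0.
\end{equation*}
Note also that $F(u)$ is fast decaying whenever $u$ is, since $\{F(u)>s\}\subset\{u>0\}$ for $s\geq0$. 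The monotonicity argument below gives more: $\{F(u)>s\}=\{u>\tau(s)\}$, and $\tau(s)>0$ when $s>0$ and $F\not\equiv0$ near $0$, so this set has finite measure. The degenerate case $F\equiv0$ on $[0,\tau]$ is handled the same way.

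The key step is a set identity. Fix $s\geq0$ and set $\tau(s):=\sup\{t\geq0\colon F(t)\leq s\}$. Since $F$ is nondecreasing, the set $\{t\colon F(t)\leq s\}$ is an interval starting at $0$. By left-continuity it is closed on the right: if $t_k\uparrow\tau$ with $F(t_k)\leq s$, then $F(\tau)=\lim F(t_k)\leq s$. Hence $\{t\colon F(t)>s\}=(\tau(s),\infty)$, with the convention that it is empty when $\tau=\infty$. Consequently $\{F(u)>s\}=\{u>\tau(s)\}$ and $\{F(u^*)>s\}=\{u^*>\tau(s)\}$. Equimeasurability \eqref{equimeasurable} of $u$ and $u^*$ then gives the displayed identity. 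This is exactly where left-continuity is needed; without it one would only get $\{u\geq\tau\}$ versus $\{u>\tau\}$, and the measures could differ.

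It remains to check that $F(u^*)$ coincides pointwise with the canonical representative, and not merely a.e. Since $u^*(x)=\phi(|x|)$ with $\phi$ nonincreasing and left-continuous in $r$ (from the infimum formula), the composition $F\circ\phi$ is nonincreasing, and it is left-continuous in $r$ because $F$ is nondecreasing and left-continuous. A nonincreasing left-continuous radial profile is recovered from its distribution function by the same infimum formula defining $u^{\#}$. Combining this with the equality of distribution functions yields $F(u^*)=(F(u))^*$ everywhere.

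I expect the main obstacle to be this last normalization. One must verify carefully that $\phi(r)=\inf\{t\colon\mu(t)<|B^n|r^n\}$ gives a left-continuous profile, where $\mu(t)=\mathcal H^n_g(\{u>t\})$ is right-continuous. One must also check that composing with $F$ preserves the "generalized inverse" structure, including at $x=0$, where the convention $u^{\#}(0)=\mathrm{ess\,sup}|u|$ must match $F(\mathrm{ess\,sup}\,u)=\mathrm{ess\,sup}\,F(u)$. The latter again uses monotonicity and left-continuity of $F$. If the pointwise check becomes messy, I would settle for equality a.e., which suffices for all applications, namely the Hardy inequality via Lemma~\ref{lemmaHL}.
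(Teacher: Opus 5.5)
Your core argument is the same as the paper's. The set identity $\{F(u)>s\}=\{u>\tau(s)\}$, together with \eqref{equimeasurable}, gives equality of distribution functions, and the two rearrangements are then identified.

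The gap is in the pointwise normalization you add on top, a step the paper's proof passes over silently. Since $\phi$ is nonincreasing, $r_k\uparrow r$ gives $\phi(r_k)\downarrow\phi(r)$. So the arguments of $F$ approach $\phi(r)$ from \emph{above}, left-continuity of $F$ gives nothing, and you would need right-continuity. This cannot be patched: with the paper's normalization ($u^{\#}$ left-continuous, hence $u^*$ upper semicontinuous), the pointwise identity is false for merely left-continuous $F$.

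Here is a counterexample. Take $u$ with $\mathcal H^n_g(\{u>t\})=(2-t)_+$ (on any $\Sigma$ of measure at least $2$) and $F=\chi_{(1,\infty)}$.
\begin{itemize}
\item Then $u^{\#}(s)=(2-s)_+$, so $F(u^*)$ is the indicator of the open ball $\{|B^n||x|^n<1\}$.
\item On the other hand, $\mathcal H^n_g(\{F(u)>t\})=\chi_{[0,1)}(t)$. The infimum formula then gives $(F(u))^{\#}=\chi_{[0,1]}$, so $(F(u))^*$ is the indicator of the closed ball $\{|B^n||x|^n\le1\}$.
\end{itemize}
The two functions differ on a sphere. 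Hence only $F(u^*)=(F(u))^*$ a.e.\ can be proved. That is exactly what your fallback yields. It is also all the paper's argument actually shows: its last step, from $(F(u^*))^*=(F(u))^{**}$ to $F(u^*)=(F(u))^*$, tacitly uses $v^*=v$ for radially nonincreasing $v$, which holds here only a.e. The a.e.\ version suffices for the Hardy--Littlewood application. For continuous $F$, such as $F(t)=t^p$, your normalization argument does go through.

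There are two smaller inaccuracies.
\begin{itemize}
\item Left-continuity is not what equimeasurability hinges on. Suppose $\{F>s\}=[\tau,\infty)$. Then both $\{F(u)>s\}=\{u\geq\tau\}$ and $\{F(u^*)>s\}=\{u^*\geq\tau\}$ carry the non-strict inequality, and $\tau>0$ because $F(0)=0\le s$. Continuity of measure along the sets $\{u>\tau-\frac1k\}$, which have finite measure by fast decay, gives $\mathcal H^n_g(\{u\geq\tau\})=\mathcal L^n(\{u^*\geq\tau\})$. So the measures do not differ.
\item The condition guaranteeing $\tau(s)>0$ for all $s>0$ is $\lim_{t\downarrow0}F(t)=0$, not ``$F\not\equiv0$ near $0$''. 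This is what makes $F(u)$ fast decaying, so that $(F(u))^*$ is defined at all. For example, $F=\chi_{(0,\infty)}$ is nondecreasing, left-continuous and vanishes at $0$, yet $\{F(u)>\frac12\}=\{u>0\}$, which may have infinite measure.
\end{itemize}
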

\begin{proof}
    Since $F$ is nondecreasing, 
    \begin{align*}
        \mathcal{H}^n_g(\{F(u)>s\})=\mathcal{H}^n_g(\{u>F^{-1}(s)\})=\mathcal{L}^n(\{u^*>F^{-1}(s)\})=\mathcal{L}^n(\{F(u^*)>s\}),
    \end{align*}
    for all $s \geq 0$. Then,
    $$\mathcal{L}^n(\{F(u^*)>s\})=\mathcal{H}^n_g(\{F(u)>s\})=\mathcal{L}^n(\{(F(u))^*>s\}),$$
    which implies that $(F(u^*))^*=(F(u))^{**}.$ Hence, $F(u^*)=(F(u))^*$, as desired.
\end{proof} 

We can now prove the Hardy inequality on submanifolds; see Theorem \ref{theohardy}. The strategy is to apply the Euclidean Hardy inequality to $u^*$ and then use the two previous rearrangement lemmas, together with the Bishop--Gromov comparison theorem, to dominate the weighted integral on $\Sigma$ by the corresponding Euclidean weighted integral.
\begin{proof}[Proof of Theorem \ref{theohardy}]
    Using the P\'{o}lya--Szeg\"{o} inequality and the $L^p$-Hardy inequality on $\mathbb{R}^n$, we have
    $$C_{\mathrm{PS}}^p\|\nabla^\Sigma u\|_{L^p(\Sigma)}^p \geq \|\nabla u^*\|_{L^p(\mathbb{R}^n)}^p \geq \left|\dfrac{n-p}{p}\right|^p\int_{\mathbb{R}^n} \dfrac{|u^*|^p}{|x|^p}\mathrm d\mathcal{L}^n.$$
    By the Hardy-Littlewood inequality,
    $$\int_{\Sigma}\dfrac{|u|^p}{(d(x,o))^p}d\mathcal{H}_g^n \leq \int_{\mathbb{R}^n}(|u|^p)^* \left(\dfrac{1}{(d(x,o))^p}\right)^*\mathrm d\mathcal{L}^n.$$
    Define $h(x):=\dfrac{1}{(d(x,o))^p}$. Next, we compute $h^*(y)$. Indeed,
    \begin{align*}
        h^*(y)&=h^*(|y|)=h^{\#}(|B^n||y|^n)=\inf\{t: \mathcal{H}^n_g(h(x)>t)<|B^n||y|^n\}\\
        &=\inf \left\{t: \mathcal{H}^n_g \left(d(x,o) < \frac{1}{t^{1/p}}\right)<|B^n||y|^n\right\},
    \end{align*}
    and 
    \begin{equation*}
        \mathcal{H}^n_g \left(d(x,o) < \frac{1}{t^{1/p}}\right)=\int_0^{\frac{1}{t^{1/p}}}\mathcal{H}^{n-1}_g\left(\partial B^\Sigma_r(o)\right)\mathrm dr\leq \int_0^{\frac{1}{t^{1/p}}}\omega_{n-1}r^{n-1}\mathrm dr=|B^n|\dfrac{1}{t^{n/p}},
    \end{equation*}
    where we used $\mathrm{Ric}_\Sigma \geq 0$ and $\Sigma$ being complete to apply the Bishop-Gromov theorem. Then, for all $t>\frac{1}{|y|^p}$, $\mathcal{H}^n_g \left(d(x,o) < \frac{1}{t^{1/p}}\right)<|B^n||y|^n$, which implies that $h^*(y) \leq \frac{1}{|y|^p}$. Hence,
    $$\int_{\Sigma}\dfrac{|u|^p}{(d(x,o))^p}\mathrm d\mathcal{H}_g^n \leq \int_{\mathbb{R}^n}\dfrac{|u^*|^p}{|x|^p}\mathrm d\mathcal{L}^n,$$
    which allows us
    \begin{align*}
        C_{\mathrm{PS}}^p\|\nabla^\Sigma u\|_{L^p(\Sigma)}^p \geq \left|\dfrac{n-p}{p}\right|^p\int_{\mathbb{R}^n} \dfrac{|u^*|^p}{|x|^p}\mathrm d\mathcal{L}^n&\geq \left|\dfrac{n-p}{p}\right|^p \int_{\Sigma}\dfrac{|u|^p}{(d(x,o))^p}\mathrm d\mathcal{H}_g^n,
    \end{align*}
    as desired.
\end{proof}
Finally, by a standard interpolation argument, a Hardy--Sobolev inequality can be derived as follows.
\begin{cor}
    Let $M$ be a complete noncompact manifold of dimension $n+m$ with nonnegative sectional curvature. Let $\Sigma$ be a  complete submanifold of $M$ of dimension $n$ without boundary satisfying $\|H\|_{L^n(\Sigma)}<C_{\mathrm{Bre}}$ and $\text{Ric}_{\Sigma}\geq 0$. Then, for all $u \in W^{1,p}_0(\Sigma)$ with $1<p<n$ and $\theta \in [0,1]$, 
$$\left(\int_\Sigma \dfrac{|u|^{\frac{p(n-\theta p)}{n-p}}}{(d(x,o))^{\theta p}}\mathrm d\mathcal{H}_g^n\right)^{\frac{n-p}{n-\theta p}}\leq C(n,p,\theta,C_{\mathrm{PS}})\|\nabla^\Sigma u\|_{L^p(\Sigma)}^p,$$
where $o$ is a fixed point in $\Sigma$.
\end{cor}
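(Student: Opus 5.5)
The plan is to interpolate between the two endpoint inequalities already available on $\Sigma$. At $\theta=1$ the statement is exactly Theorem~\ref{theohardy}. At $\theta=0$ it is the Sobolev inequality of Corollary~\ref{corSob} raised to the power $p$. For $\theta\in(0,1)$ we split the integrand with H\"older's inequality, with exponents $1/\theta$ and $1/(1-\theta)$, so that one factor is the Hardy integrand and the other is a pure power of $|u|$.

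\textbf{The splitting.} We write
\begin{equation*}
\frac{|u|^{\frac{p(n-\theta p)}{n-p}}}{d(x,o)^{\theta p}}=\left(\frac{|u|^p}{d(x,o)^p}\right)^{\theta}|u|^{q},\qquad q=\frac{p(n-\theta p)}{n-p}-\theta p=\frac{np(1-\theta)}{n-p}=(1-\theta)p^*.
\end{equation*}
H\"older then gives
\begin{equation*}
\int_\Sigma \frac{|u|^{\frac{p(n-\theta p)}{n-p}}}{d(x,o)^{\theta p}}\mathrm d\mathcal H^n_g\leq\left(\int_\Sigma\frac{|u|^p}{d(x,o)^p}\mathrm d\mathcal H^n_g\right)^{\theta}\left(\int_\Sigma|u|^{p^*}\mathrm d\mathcal H^n_g\right)^{1-\theta}.
\end{equation*}

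\textbf{Applying the endpoints.} We bound the first factor by Theorem~\ref{theohardy}, which gives at most $\bigl(\tfrac{pC_{\mathrm{PS}}}{n-p}\bigr)^{p\theta}\|\nabla^\Sigma u\|_{L^p(\Sigma)}^{p\theta}$. We bound the second factor by Corollary~\ref{corSob}, which gives at most $(TA(n,p)C_{\mathrm{PS}})^{p^*(1-\theta)}\|\nabla^\Sigma u\|_{L^p(\Sigma)}^{p^*(1-\theta)}$.

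The total power of the gradient norm is
\begin{equation*}
p\theta+\frac{np(1-\theta)}{n-p}=\frac{p(n-\theta p)}{n-p}.
\end{equation*}
Raising both sides to the power $\frac{n-p}{n-\theta p}$ therefore produces exactly $\|\nabla^\Sigma u\|_{L^p(\Sigma)}^p$, with
\begin{equation*}
C(n,p,\theta,C_{\mathrm{PS}})=\left[\left(\frac{pC_{\mathrm{PS}}}{n-p}\right)^{p\theta}\bigl(TA(n,p)C_{\mathrm{PS}}\bigr)^{p^*(1-\theta)}\right]^{\frac{n-p}{n-\theta p}}.
\end{equation*}

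\textbf{Main obstacle.} There is no real obstacle; the only delicate step is the exponent bookkeeping above, which must make the final power exactly $p$. One should also confirm that both endpoint results hold for all $u\in W^{1,p}_0(\Sigma)$ under the stated hypotheses. They do, since $\Sigma$ is complete without boundary with $\mathrm{Ric}_\Sigma\ge0$ and $\|H\|_{L^n(\Sigma)}<C_{\mathrm{Bre}}$, which covers the hypotheses of both Theorem~\ref{theohardy} and Corollary~\ref{corSob}. If needed, one can argue first for $u\in C^\infty_0(\Sigma)$ and then pass to the limit along an approximating sequence, using Fatou's lemma on the left-hand side.
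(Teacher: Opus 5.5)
Your proposal is correct and matches the paper's proof: the same H\"older splitting with exponents $1/\theta$ and $1/(1-\theta)$ into the Hardy integrand times $|u|^{(1-\theta)p^*}$, followed by Theorem~\ref{theohardy}, Corollary~\ref{corSob}, and raising to the power $\frac{n-p}{n-\theta p}$. The only difference is that you write out the constant $C(n,p,\theta,C_{\mathrm{PS}})$ explicitly, where the paper leaves it as $C$.
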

\begin{proof}
    For $\theta \in[0,1]$, by applying Holder's inequality, we have
    \begin{align*}
        \int_\Sigma \dfrac{|u|^{\frac{p(n-\theta p)}{n-p}}}{(d(x,o))^{\theta p}}\mathrm d\mathcal{H}_g^n &= \int_\Sigma \left(\dfrac{|u|}{d(x,o)}\right)^{\theta p}|u|^{(1-\theta)\frac{np}{n-p}}\mathrm d\mathcal{H}_g^n\\
        &\leq \left(\int_\Sigma \dfrac{|u|^p}{(d(x,o))^p}\mathrm d\mathcal{H}_g^n\right)^\theta \left(\int_\Sigma u^{p^*}\mathrm d\mathcal{H}_g^n\right)^{1-\theta}\leq C\|\nabla^\Sigma u\|_{L^p(\Sigma)}^{\theta p+(1-\theta)p^*}\\
        &= C\left(\|\nabla^\Sigma u\|_{L^p(\Sigma)}^{p}\right)^{\frac{n-\theta p}{n-p}},
    \end{align*}
    which implies our claim, since $\frac{n-p}{n-\theta p} \leq 1$.
\end{proof}
\begin{remark}\label{remarkhardy}
    In fact, using a standard rearrangement argument, we can get an explicit form of the constant $C(n,p)$ in the above corollary. Indeed, we have 
    \begin{align*}
        \int_{\Sigma} \dfrac{|u|^{\frac{p(n-\theta p)}{n-p}}}{(d(x,o))^{\theta p}}\mathrm d\mathcal{H}^n_g &\leq \int_{\mathbb{R}^n} \left(|u|^{\frac{p(n-\theta p)}{n-p}}\right)^*\left(\dfrac{1}{(d(x,o))^{\theta p}}\right)^*\mathrm d\mathcal{L}^n\\
        & \leq \int_{\mathbb{R}^n} |u^*|^{\frac{p(n-\theta p)}{n-p}}\dfrac{1}{|x|^{p\theta}}\mathrm d\mathcal{L}^n.
    \end{align*}
    Then,
    \begin{align*}
        \left(\int_{\Sigma} \dfrac{|u|^{\frac{p(n-\theta p)}{n-p}}}{(d(x,o))^{\theta p}}\mathrm d\mathcal{H}^n_g\right)^{\frac{n-p}{n-\theta p}}&\leq \left(\int_{\mathbb{R}^n} |u^*|^{\frac{p(n-\theta p)}{n-p}}\dfrac{1}{|x|^{p\theta}}\mathrm d\mathcal{L}^n\right)^{\frac{n-p}{n-\theta p}}\\
        &\leq \dfrac{1}{\mu(\mathbb{R}^n)}\int_{\mathbb{R}^n}|\nabla u^*|^p \mathrm d\mathcal{L}^n\\
        &\leq \dfrac{C_{\mathrm{PS}}^p}{\mu(\mathbb{R}^n)}\int_{\Sigma}|\nabla^{\Sigma} u|^p \mathrm d\mathcal{H}^n_g,
    \end{align*}
    which allows us to define $\mu(\mathbb{R}^n)$ as follows from a density argument:
    $$\mu(\mathbb{R}^n)=\inf \left\{\dfrac{\int_{\mathbb{R}^n}|\nabla u|^p\mathrm d\mathcal{L}^n}{\left(\int_{\mathbb{R}^n} |u|^{\frac{p(n-\theta p)}{n-p}} \frac{1}{|x|^{p\theta}}\mathrm d\mathcal{L}^n\right)^{\frac{n-p}{n-p\theta}}}: u \in W^{1,p}_0(\mathbb{R}^n) \setminus\{0\} \right\}.$$
    In particular, in the equality case of the P\'{o}lya--Szeg\"{o} inequality compatible with the hypotheses of the corollary, namely $\Sigma=\mathbb{R}^n\subset\mathbb{R}^{n+2}$ (the flat round $n$-ball of infinite radius, which is complete and without boundary), the constant $\frac{C_{\mathrm{PS}}^p}{\mu(\mathbb{R}^n)}$ becomes $\frac{1}{\mu(\mathbb{R}^n)}$, which is sharp and identical to the result given by Ghoussoub-Moradifam \cite[Theorem 15.2.2]{MR3052352}. Also, from \cite{MR3052352}, such a constant is only attained if $\Sigma =\mathbb{R}^n$. Moreover, in the general setting where $\Sigma$ is a submanifold satisfying $\|H\|_{L^n(\Sigma)} < C_{\mathrm{Bre}}$ and $\mathrm{Ric}_{\Sigma} \geq 0$, the obtained results refine those of Cabr\'{e} and Miraglio \cite{MR4435963}, although the sharpness of the constant is still unknown.
\end{remark}

\section{Moser--Trudinger Inequality on Submanifolds}\label{sec4}

In this section, our goal is to study when
$$\sup_{u\in\mathcal A} \int_\Sigma e^{\alpha|u|^{\frac{n}{n-1}}}<\infty,$$
for $\mathcal A=\{u \in C^\infty_0(\Sigma): \|\nabla^{\Sigma} u\|_{L^n(\Sigma)}\leq 1\}$. Precisely, we will present the proofs of Theorems~\ref{theoMoser} and~\ref{theoIM}. For Theorem~\ref{theoMoser}, corresponding to the Moser--Trudinger inequality, we again rely on the P\'{o}lya--Szeg\"{o} inequality established in order to reduce the problem to the Euclidean space, where the classical inequality is available. This allows us to transfer the Euclidean estimate to the submanifold $\Sigma$. On the other hand, for the supercritical case in Theorem~\ref{theoIM}, the argument is based on the classical Moser sequence in $\mathbb R^n$ transported to the submanifold $\Sigma$. Although the geometry of $\Sigma$ causes a distortion in this procedure, we can control it by slightly decreasing the exponent $\alpha>n\omega_{n-1}^{\frac1{n-1}}$, while still remaining in the supercritical regime. This establishes the sharpness of the Moser--Trudinger inequality on $\Sigma$ when $C_{\mathrm{PS}}=1$.

\begin{proof}[Proof of Theorem \ref{theoMoser}]
    Using Proposition \ref{propeqm} with $\Phi(t)=e^{\alpha t^{\frac{n}{n-1}}}-1$, and the P\'{o}lya--Szeg\"{o} inequality, we have
    \begin{align*}
        \sup_{u\in \mathcal{A}}\int_{\Sigma}e^{\alpha |u|^{\frac{n}{n-1}}}\mathrm d\mathcal H^n_g&=\sup_{u\in \mathcal{A}}\int_{\Sigma}\left(e^{\alpha |u|^{\frac{n}{n-1}}}-1\right)\mathrm d\mathcal H^n_g+\mathcal H^n_g(\Sigma)\\
        &=\sup_{u\in \mathcal{A}}\int_{D} e^{\alpha |u^*|^{\frac{n}{n-1}}}\mathrm d\mathcal{L}^n\leq \sup_{w \in \mathcal{B}}\int_{D}e^{\alpha |w|^{\frac{n}{n-1}}}\mathrm d\mathcal{L}^n\\
        &=\sup_{v \in \mathcal{C}}\int_{D}e^{\alpha C_{\mathrm{PS}}^{\frac{n}{n-1}} |v|^{\frac{n}{n-1}}}\mathrm d\mathcal{L}^n,
    \end{align*}
    where $D \subset \mathbb{R}^n$ is the ball centered at the origin having the volume $\mathcal H_g^n(\Sigma)$,
    $$\mathcal{B}:=\{w \in W^{1,n}_0(D):\|\nabla w\|_{L^n(D)} \leq C_{\mathrm{PS}}\}\;\;\text{and}\;\; \mathcal{C}:=\{v \in W^{1,n}_0(D):\|\nabla v\|_{L^n(D)} \leq 1\}.$$ 
    From the original Moser's result, for some $C_0>0$,
    \begin{align*}
        \sup_{u\in \mathcal{A}}\int_{\Sigma}e^{\alpha |u|^{\frac{n}{n-1}}}\mathrm d\mathcal H^n_g\leq \sup_{v \in \mathcal{C}}\int_{D}e^{\alpha C_{\mathrm{PS}}^{\frac{n}{n-1}} |v|^{\frac{n}{n-1}}}\mathrm d\mathcal{L}^n\leq C_0 \mathcal H_g^n(\Sigma),
    \end{align*}
    where $\alpha C_{\mathrm{PS}}^{\frac{n}{n-1}} \leq n\omega_{n-1}^{\frac{1}{n-1}}$, as desired.
\end{proof}

\begin{proof}[Proof of Theorem \ref{theoIM}]
From the sharpness of the Euclidean Moser inequality, for each $\ell\in\mathbb N$ there exists a radial function $\varphi_\ell\colon\mathbb R^n\to\mathbb R$ such that $\mathrm{supp}(\varphi_\ell)\subset B_{\frac1\ell}(0)$, $\|\nabla\varphi_\ell\|_{L^n(\mathbb R^n)}\leq1$, and
\begin{equation}\label{eqstar}
\int_{B_{\frac1\ell}(0)}e^{\alpha|\varphi_\ell|^{\frac{n}{n-1}}}\mathrm dx\overset{\ell\to\infty}\longrightarrow\infty,\quad\forall\alpha>n\omega_{n-1}^{\frac{1}{n-1}}.
\end{equation}
To construct such a sequence, define
\begin{equation}\label{MoserRn}
\varphi_\ell(x):=\left(\dfrac{\ell}{n}\right)^{\frac{n-1}{n}}\omega_{n-1}^{-\frac1n}\eta\left(\dfrac{t}{\ell}\right),
\end{equation}
where $e^{-t}=\frac{|x|^n}{R^n}$, $\eta(s)=\min\{s-L,1\}\chi_{\{s\geq L\}}(s)$, and $L=\frac{n}\ell\log[R(\ell+1)]$. Fix $x_0\in \Sigma$. Let
\begin{equation*}
\exp_{x_0}\colon B_R(0)\subset T_{x_0}\Sigma\simeq\mathbb R^n\to B_R(x_0)\subset \Sigma
\end{equation*}
be the exponential map, where $R>0$ is chosen small enough such that $\exp_{x_0}$ is a diffeomorphism. In normal coordinates, we have:
\begin{equation}\label{nc}
g_{ij}(x)=\delta_{ij}+O(|x|^2),\,g^{ij}(x)=\delta_{ij}+O(|x|^2),\mbox{ and }\sqrt{\det g(x)}=1+O(|x|^2).
\end{equation}
Taking $\ell$ large enough, we may assume $B_{\frac1\ell}(0)\subset B_R(0)$ to define $\psi_\ell\colon \Sigma\to\mathbb R$ by
\begin{equation}\label{MoserSigma}
\psi_\ell(y):=\left\{\begin{array}{ll}
     \varphi_\ell(\exp_{x_0}^{-1}(y)),&\mbox{if }y\in B_{\frac1\ell}(x_0),\\
     0,&\mbox{if }y\notin B_{\frac1\ell}(x_0). 
\end{array}\right.
\end{equation}
By \eqref{nc} and writing $x=\exp_{x_0}^{-1}(y)$, we obtain
\begin{equation*}
|\nabla\psi_\ell(y)|^2=\sum_{i,j=1}^ng^{ij}(x)\dfrac{\partial\varphi_\ell}{\partial x_i}(x)\dfrac{\partial\varphi_\ell}{\partial x_j}(x)\leq(1+C|x|^2)|\nabla\varphi_\ell(x)|^2.
\end{equation*}
Integrating,
\begin{equation}\label{psilleq}
\int_{\Sigma}|\nabla\psi_\ell(y)|^n\mathrm d\mathcal H^{n}_g(y)\leq\int_{B_{\frac1\ell}(0)}\left(1+C|x|^2\right)^{\frac{n}{2}}|\nabla\varphi_\ell(x)|^n\sqrt{\det g(x)}\mathrm dx.
\end{equation}
Using \eqref{nc} and $\|\nabla\varphi_\ell\|_{L^n(\mathbb R^n)}\leq1$, we have
\begin{equation*}
\int_{\Sigma}|\nabla\psi_\ell(y)|^n\mathrm d\mathcal H^{n}_g(y)\leq\left(1+\dfrac{C}{\ell^2}\right)^{\frac{n}{2}+1}\int_{B_{\frac1\ell}(0)}|\nabla\varphi_\ell(x)|^n\mathrm dx\leq\left(1+\frac{C}{\ell^2}\right)^{\frac{n+2}2}.
\end{equation*}
Replacing $\psi_\ell$ by $\widetilde\psi_\ell=\psi_\ell/(1+\frac{C}{\ell^2})^{\frac{n+2}{2n}}$, we get $\|\nabla\widetilde\psi_\ell\|_{L^n(\Sigma)}\leq1$. Moreover, since $\sqrt{\det g(x)}\geq1-C|x|^2$, we obtain
\begin{align}
\int_{\Sigma}e^{\alpha|\widetilde\psi_\ell|^{\frac{n}{n-1}}}\mathrm d\mathcal H^n_g&\geq\int_{B_{\frac1\ell}(0)}\exp\left(\dfrac{\alpha|\varphi_\ell|^{\frac{n}{n-1}}}{(1+\frac{C}{\ell^2})^{\frac{n+2}{2(n-1)}}}\right)\sqrt{\det g(x)}\mathrm dx\nonumber\\
&\geq\left(1-\dfrac{C}{\ell^2}\right)\int_{B_{\frac1\ell}(0)}\exp\left(\dfrac{\alpha|\varphi_\ell|^{\frac{n}{n-1}}}{(1+\frac{C}{\ell^2})^{\frac{n+2}{2(n-1)}}}\right)\mathrm dx.\label{eqasterix}
\end{align}
Given $\alpha>n\omega_{n-1}^{\frac1{n-1}}$, choose $\widetilde\alpha\in(n\omega_{n-1}^{\frac{1}{n-1}},\alpha)$ and $\ell$ sufficiently large such that
\begin{equation*}
1-\dfrac{C}{\ell^2}\geq\frac12\mbox{ and }\dfrac{\alpha}{\left(1+\frac{C}{\ell^2}\right)^{\frac{n+2}{2(n-1)}}}>\widetilde\alpha.
\end{equation*}
Then, by \eqref{eqstar} and \eqref{eqasterix},
\begin{equation*}
\int_{\Sigma}e^{\alpha|\widetilde\psi_\ell|^{\frac{n}{n-1}}}\mathrm d\mathcal H^n_g\geq\frac12\int_{B_{\frac1\ell}(0)}e^{\widetilde\alpha|\varphi_\ell|^{\frac{n}{n-1}}}\mathrm dx\overset{\ell\to\infty}\longrightarrow\infty.
\end{equation*}
This concludes the proof.
\end{proof}

\section{Exact Growth Inequality on Submanifolds}\label{sec5}

In this last section, we study the exact growth inequalities, where $\mathcal H^n_g(\Sigma)$ may be infinite. Specifically, we establish the exact growth inequality in our setting in the proof of Theorem~\ref{theoEG}, and then derive its consequences in Corollaries~\ref{cor1} and~\ref{cor2}. We also show that the inequality in Theorem~\ref{theoEG} fails whenever $\alpha>n\omega_{n-1}^{\frac1{n-1}}$ or $q<\frac{n}{n-1}$. As before, we do not obtain any result in the range~\eqref{rangeconj}.

\begin{proof}[Proof of Theorem \ref{theoEG}]
Let $u\in W^{1,n}_0(\Sigma)$ satisfying $\|\nabla^\Sigma u\|_{L^n(\Sigma)}\leq1$, and let $u^*$ denote its Schwarz rearrangement. By Propositions \ref{propeqm} and \ref{propCPS}, we obtain
\begin{align*}
\int_\Sigma\dfrac{\exp_n\left(\alpha_\Sigma|u|^{\frac{n}{n-1}}\right)}{(1+|u|)^{\frac{n}{n-1}}}\mathrm d\mathcal H^n_g&=\int_{\mathbb R^n}\dfrac{\exp_n\left(\alpha_\Sigma C_{\mathrm{PS}}^{\frac{n}{n-1}}\left|\frac{u^*}{C_{\mathrm{PS}}}\right|^{\frac{n}{n-1}}\right)}{(1+|u^*|)^{\frac{n}{n-1}}}\mathrm d\mathcal L^n\\
&\leq \int_{\mathbb R^n}\dfrac{\exp_n\left(\alpha_\Sigma C_{\mathrm{PS}}^{\frac{n}{n-1}}\left|\frac{u^*}{C_{\mathrm{PS}}}\right|^{\frac{n}{n-1}}\right)}{\left(1+\left|\frac{u^*}{C_{\mathrm{PS}}}\right|\right)^{\frac{n}{n-1}}}\mathrm d\mathcal L^n.
\end{align*}
Moreover, Theorem \ref{MainPS} ensures that $\|\nabla u^*\|_{L^n(\mathbb R^n)}\leq C_{\mathrm{PS}}$. Consequently, we may apply the Euclidean exact growth inequality due to Masmoudi and Sani \cite[Theorem 1.1]{MR3355498}, which yields
\begin{equation*}
\int_\Sigma\dfrac{\exp_n\left(\alpha_\Sigma|u|^{\frac{n}{n-1}}\right)}{(1+|u|)^{\frac{n}{n-1}}}\mathrm d\mathcal H^n_g\leq C\|u^*\|_{L^n(\mathbb R^n)}^n=C\|u\|_{L^n(\Sigma)}^n.
\end{equation*}
This completes the proof.
\end{proof}

We now proceed to prove the corollaries, which are related to the inequalities of Adachi--Tanaka \cite{MR1646323} and Li--Ruf \cite{MR2400264}.
\begin{proof}[Proof of Corollary \ref{cor1}]
Fix $\alpha\in[0,\alpha_\Sigma)$ and define the auxiliary function $\phi_\alpha\colon(0,\infty)\to\mathbb R$ given by
\begin{equation*}
\phi_\alpha(t)=\dfrac{(1+t)^{\frac{n}{n-1}}\exp_n(\alpha t^{\frac{n}{n-1}})}{\exp_n(\alpha_\Sigma t^{\frac{n}{n-1}})},\quad\forall t>0.
\end{equation*}
Since $\alpha<\alpha_\Sigma$, one can notice that
\begin{equation*}
\lim_{t\to0}\phi_\alpha(t)=\left(\dfrac{\alpha}{\alpha_\Sigma}\right)^{n-1}\mbox{ and }\lim_{t\to\infty}\phi_\alpha(t)=0.
\end{equation*}
As a consequence, there exists $C_\alpha>0$ such that
\begin{equation*}
\exp_n(\alpha t^{\frac{n}{n-1}})\leq C_\alpha\dfrac{\exp_n(\alpha_\Sigma t^{\frac{n}{n-1}})}{(1+t)^{\frac{n}{n-1}}},\quad\forall t\geq0.
\end{equation*}
The conclusion now follows directly from Theorem \ref{theoEG}. This concludes the proof.
\end{proof}

\begin{proof}[Proof of Corollary \ref{cor2}]
Let $u\in W^{1,n}_0(\Sigma)$ satisfying $\|\nabla^\Sigma u\|_{L^n(\Sigma)}^n+\tau\|u\|^n_{L^n(\Sigma)}\leq1$. We divide the proof into two cases.

\vspace{0.1cm}

\noindent\underline{Case 1:} $\|u\|^n_{L^n(\Sigma)}\geq\frac{n-1}{\tau n}$.

\vspace{0.1cm}

In this case, we have $\|\nabla^\Sigma(n^{\frac1n}u)\|^n_{L^n(\Sigma)}\leq n(1-\tau\|u\|^n_{L^n(\Sigma)})\leq1$. Then, we may apply Corollary \ref{cor1} to $n^{\frac1n}u$ to obtain $C_\alpha>0$ such that
\begin{equation*}
\int_\Sigma\exp_n\left(n^{\frac{1}{n-1}}\alpha|u|^{\frac{n}{n-1}}\right)\mathrm d\mathcal H^n_g\leq nC_\alpha\|u\|^n_{L^n(\Sigma)}\leq \dfrac{nC_\alpha}{\tau},\quad\forall \alpha\in[0,\alpha_\Sigma).
\end{equation*}
Choosing $\alpha=n^{-\frac1{n-1}}\alpha_\Sigma$ in the above inequality concludes this case.

\vspace{0.1cm}

\noindent\underline{Case 2:} $\|u\|^n_{L^n(\Sigma)}<\frac{n-1}{\tau n}$.

\vspace{0.1cm}

We first note that
\begin{align*}
\int_{\{|u|<1\}}\exp_n\left(\alpha_\Sigma|u|^{\frac{n}{n-1}}\right)\mathrm d\mathcal H_g^n&\leq\dfrac{\exp_n(\alpha_\Sigma)}{\alpha_\Sigma^{n-1}}\int_{\{|u|<1\}}\left(\alpha_\Sigma|u|^{\frac{n}{n-1}}\right)^{n-1}\mathrm d\mathcal H^n_g\\
&=\exp_n(\alpha_\Sigma)\int_{\{|u|<1\}}|u|^n\mathrm d\mathcal H^n_g\leq\exp_n(\alpha_\Sigma)\|u\|^n_{L^n(\Sigma)}\\
&\leq\dfrac{n-1}{\tau n}\exp_n(\alpha_\Sigma).
\end{align*}
Thus, it remains to prove that
\begin{equation}\label{eqpp2}
\int_{\{|u|\geq1\}}\exp_n\left(\alpha_\Sigma|u|^{\frac{n}{n-1}}\right)\mathrm d\mathcal H^n_g\leq C.
\end{equation}
We first claim that
\begin{equation}\label{eqpp3}
\left[\exp_n(t)\right]^p\leq\exp_n(pt),\quad\forall t\geq0,\ p\geq1.
\end{equation}
Indeed, a direct computation guarantees
\begin{equation*}
\dfrac{\mathrm d}{\mathrm dt}\left(\dfrac{\left[\exp_n(t)\right]^p}{\exp_n(pt)}\right)=\dfrac{p\left[\exp_n(t)\right]^{p-1}t^{n-2}}{\left[\exp_n(pt)\right]^2(n-2)!}\left[\exp_{n-1}(pt)-p^{n-2}\exp_{n-1}(t)\right]\geq0
\end{equation*}
and, by \cite[Lemma 3.1]{MR4819618},
\begin{equation*}
\dfrac{\left[\exp_n(t)\right]^p}{\exp_n(pt)}=\dfrac{\left[1-\frac{n-1}{(n-1)!}\int_t^\infty s^{n-2}e^{-s}\mathrm ds\right]^p}{1-\frac{n-1}{(n-1)!}\int_{pt}^\infty s^{n-2}e^{-s}\mathrm ds}\overset{t\to\infty}\longrightarrow1.
\end{equation*}
This proves \eqref{eqpp3}.

Using \eqref{eqpp3} and denoting $q=\frac{n-1}{n-1-\tau\|u\|^n_{L^n(\Sigma)}}>1$, we obtain
\begin{align}
\int_{\{|u|\geq1\}}&\exp_n\left(\alpha_\Sigma|u|^{\frac{n}{n-1}}\right)\mathrm d\mathcal H^n_g=\int_{\{|u|\geq1\}}\dfrac{\exp_n\left(\alpha_\Sigma|u|^{\frac{n}{n-1}}\right)}{(1+|u|)^{\frac{n}{n-1}\frac1q}}(1+|u|)^{\frac{n}{n-1}\frac1q}\mathrm d\mathcal H^n_g\nonumber\\
&\leq\left(\int_{\{|u|\geq1\}}\dfrac{\exp_n\left(q\alpha_\Sigma|u|^{\frac{n}{n-1}}\right)}{(1+|u|)^{\frac{n}{n-1}}}\mathrm d\mathcal H^n_g\right)^{\frac{1}{q}}\left(\int_{\{|u|\geq1\}}(1+|u|)^{\frac{n}{(n-1)(q-1)}}\mathrm d\mathcal H^n_g\right)^{\frac{q-1}q}\label{eqpp4}\\
&\leq2^{\frac{n}{(n-1)q}}\!\left(q\!\int_{\{|u|\geq1\}}\!\dfrac{\exp_n\!\left(\alpha_\Sigma|q^{\frac{n-1}{n}}u|^{\frac{n}{n-1}}\right)}{(1+|q^{\frac{n-1}n}u|)^{\frac{n}{n-1}}}\mathrm d\mathcal H^n_g\!\right)^{\frac{1}{q}}\!\left(\!\int_{\Sigma}|u|^{\frac{n}{(n-1)(q-1)}}\!\mathrm d\mathcal H^n_g\!\right)^{\frac{q-1}q}\!.\nonumber
\end{align}
On the other hand, using the inequality $(1-x)^s\leq1-sx$ for all $s,x\in[0,1]$, we have
\begin{align}
\|\nabla(q^{\frac{n-1}n}u)\|^n_{L^n(\Sigma)}&\leq q^{n-1}(1-\tau\|u\|^n_{L^n(\Sigma)})=\left(\dfrac{1}{1-\frac{\tau}{n-1}\|u\|^n_{L^n(\Sigma)}}\right)^{n-1}(1-\tau\|u\|^n_{L^n(\Sigma)})\nonumber\\
&=\left[\dfrac{(1-\tau\|u\|^n_{L^n(\Sigma)})^{\frac{1}{n-1}}}{1-\frac{\tau}{n-1}\|u\|^n_{L^n(\Sigma)}}\right]^{n-1}\leq\left[\dfrac{1-\frac{\tau}{n-1}\|u\|^n_{L^n(\Sigma)}}{1-\frac{\tau}{n-1}\|u\|^n_{L^n(\Sigma)}}\right]^{n-1}=1.\label{eqpp5}
\end{align}
Since $\|u\|^n_{L^n(\Sigma)}<\frac{n-1}{\tau n}$, we have $\frac{q}{q-1}>n$. Fix once and for all
\begin{equation}\label{alpha0def}
\alpha_0:=\min\left\{1,\dfrac{\alpha_\Sigma}2\right\},\qquad\mbox{so that }0<\alpha_0<\alpha_\Sigma\mbox{ and }\alpha_0\leq1.
\end{equation}
By \cite[Lemma 3.1]{MR4819618}, namely $t^{\gamma-1}\leq\Gamma(\gamma)\exp_\gamma(t)$, applied with $\gamma=\frac{q}{q-1}$ (so that $\gamma-1=\frac{1}{q-1}$) to $t=\alpha_0|u|^{\frac{n}{n-1}}$, we get
\begin{equation*}
\alpha_0^{\frac1{q-1}}|u|^{\frac{n}{(n-1)(q-1)}}\leq\Gamma\left(\dfrac{q}{q-1}\right)\exp_{\frac{q}{q-1}}\left(\alpha_0|u|^{\frac{n}{n-1}}\right).
\end{equation*}
Hence, using that $p\mapsto\exp_p(t)$ is nonincreasing together with $\frac{q}{q-1}>n$, and then Corollary \ref{cor1} with the exponent $\alpha_0<\alpha_\Sigma$ applied to $u$ itself (which is legitimate since $\|\nabla^\Sigma u\|_{L^n(\Sigma)}\leq1$), it follows that
\begin{align}
\int_\Sigma|u|^{\frac{n}{(n-1)(q-1)}}\mathrm d\mathcal H^n_g&\leq\alpha_0^{-\frac1{q-1}}\Gamma\left(\dfrac{q}{q-1}\right)\int_\Sigma\exp_{\frac{q}{q-1}}\left(\alpha_0|u|^{\frac{n}{n-1}}\right)\mathrm d\mathcal H^n_g\nonumber\\
&\leq\alpha_0^{-\frac1{q-1}}\Gamma\left(\dfrac{q}{q-1}\right)\int_\Sigma\exp_{n}\left(\alpha_0|u|^{\frac{n}{n-1}}\right)\mathrm d\mathcal H^n_g\nonumber\\
&\leq C\alpha_0^{-\frac1{q-1}}\Gamma\left(\dfrac{q}{q-1}\right)\|u\|^n_{L^n(\Sigma)},\label{eqpp6}
\end{align}
where we also used the definition of $\exp_p$ for real $p$ (see \cite[pp~4-5]{MR4819618} for details). Note that rescaling by $\alpha_0$ in this way is necessary: the constant $\alpha_\Sigma$ may be smaller than $1$, so Corollary \ref{cor1} cannot be applied to $\exp_n(|u|^{\frac{n}{n-1}})$ directly. The resulting factor $\alpha_0^{-\frac{1}{q-1}}$ is harmless because it enters \eqref{eqpp4} raised to the power $\frac{q-1}{q}$, and
\begin{equation}\label{alpha0bound}
\left(\alpha_0^{-\frac{1}{q-1}}\right)^{\frac{q-1}{q}}=\alpha_0^{-\frac1q}\leq\alpha_0^{-1},\quad\forall q\geq1,
\end{equation}
since $\alpha_0\leq1$. Combining \eqref{eqpp4}, \eqref{eqpp5}, \eqref{eqpp6}, and \eqref{alpha0bound}, and applying Theorem \ref{theoEG}, we deduce that
\begin{align*}
\int_{\{|u|\geq1\}}\exp_n\left(\alpha_\Sigma|u|^{\frac{n}{n-1}}\right)\mathrm d\mathcal H^n_g&\leq 2^{\frac{n}{(n-1)q}}\left(qC\|q^{\frac{n-1}{n}}u\|^n_{L^n(\Sigma)}\right)^{\frac1q}\left[C\Gamma\left(\frac{q}{q-1}\right)\|u\|^n_{L^n(\Sigma)}\right]^{\frac{q-1}q}\\
&\leq Cq^{\frac{n}{q}}\Gamma\left(\dfrac{q}{q-1}\right)^{\frac{q-1}q}\|u\|^n_{L^n(\Sigma)}=C\dfrac{n-1}{\tau}q^{\frac{n}{q}}\Gamma\left(\dfrac{q}{q-1}\right)^{\frac{q-1}q}\dfrac{q-1}q,
\end{align*}

where in the last step we used that the definition of $q$ gives $\frac{q-1}{q}=\frac{\tau}{n-1}\|u\|^n_{L^n(\Sigma)}$, i.e.\ $\|u\|^n_{L^n(\Sigma)}=\frac{n-1}{\tau}\frac{q-1}{q}$.

Therefore, the proof of \eqref{eqpp2} is complete once we show that the function
\begin{equation}\label{eqpp9}
F(q):=q^{\frac{n}{q}}\Gamma\left(\dfrac{q}{q-1}\right)^{\frac{q-1}q}\dfrac{q-1}q
\end{equation}
is bounded for $q\in(1,\infty)$. This is stronger than needed, since $\|u\|^n_{L^n(\Sigma)}<\frac{n-1}{\tau n}$ already implies $q<\frac{n}{n-1}$. It is not hard to see that $F(q)\overset{q\to\infty}\longrightarrow1$. To analyze the behavior as $q\to1$, we set $q'=\frac{q}{q-1}$ and apply Stirling's formula for the Gamma function to obtain
\begin{equation*}
\Gamma(q')=\sqrt{2\pi}(q')^{q'-\frac12}e^{-q'}(1+o(1)),\quad\mbox{as }q'\to\infty.
\end{equation*}
Consequently,
\begin{equation*}
F(q)=q^{\frac{n}q}q'e^{-1}(2\pi)^{\frac{1}{2q'}}(q')^{-\frac{1}{2q'}}(1+o(1))\dfrac{1}{q'}\overset{q\to1}\longrightarrow e^{-1}.
\end{equation*}
This shows that \eqref{eqpp9} is bounded, and the proof of the corollary is complete.
\end{proof}

Finally, we conclude with the proof of our last main theorem.
\begin{proof}[Proof of Theorem \ref{theosuperc}]
Let us again consider the Moser functions $\varphi_\ell\colon\mathbb R^n\to\mathbb R$ defined in \eqref{MoserRn}. For $\ell$ sufficiently large, let $\psi_\ell$ denote the transport of $\varphi_\ell$ to the manifold, as defined in \eqref{MoserSigma}. We replace $\psi_\ell$ by $\psi_\ell/\|\nabla^\Sigma\psi_\ell\|_{L^n(\Sigma)}$ such that $\|\nabla^\Sigma\psi_\ell\|_{L^n(\Sigma)}=1$. By the same argument as in \eqref{psilleq}, we obtain a relation between $\psi_\ell$ and $\varphi_\ell$. More precisely, for $y\in B_{\frac{1}{\ell}}(x_0)$,
\begin{equation*}
\left(1-\dfrac{C}{\ell^2}\right)^{\frac{n+2}{2n}}\varphi_\ell(\exp_{x_0}^{-1}(y))\leq\psi_\ell(y)\leq\left(1+\dfrac{C}{\ell^2}\right)^{\frac{n+2}{2n}}\varphi_\ell(\exp_{x_0}^{-1}(y)).
\end{equation*}

We first estimate the $\|\psi_\ell\|_{L^n(\Sigma)}$. For $\ell$ large enough, we get
\begin{align*}
\|\psi_\ell\|^n_{L^n(\Sigma)}&=\int_{B_{\frac1{\ell+1}}(x_0)}|\psi_\ell|^n\mathrm d\mathcal H^n_g=\left(1+\dfrac{C}{\ell^2}\right)^{-\frac{n+2}2}\int_{B_{\frac1{\ell+1}}(0)}|\varphi_\ell(x)|^n\sqrt{\det g(x)}\mathrm dx\\
&\leq C\left(\dfrac{\ell}{n}\right)^{n-1}\int_0^{\frac1{\ell+1}}\left|\eta\left(\dfrac{n}{\ell}\log\dfrac{R}{r}\right)\right|^nr^{n-1}\mathrm dr.
\end{align*}
Using that $\eta\equiv1$ on $[0,e^{-\frac{\ell}{n}}/(\ell+1)]$ and $\eta(s)=s-L$ on $[e^{-\frac{\ell}{n}}/(\ell+1),1/(\ell+1)]$, we have
\begin{align*}
\|\psi_\ell\|^n_{L^n(\Sigma)}&\leq C\ell^{n-1}\int_0^{\frac{e^{-\frac{\ell}{n}}}{\ell+1}}r^{n-1}\mathrm dr+C\ell^{n-1}\int_{\frac{e^{-\frac{\ell}{n}}}{\ell+1}}^{\frac1{\ell+1}}\left|\frac{n}\ell\log\frac{R}{r}-\frac{n}{\ell}\log\left[R(\ell+1)\right]\right|^nr^{n-1}\mathrm dr\\
&=C\ell^{n-1}\dfrac{e^{-\ell}}{(\ell+1)^n}+\dfrac{C}{\ell}\int_{\frac{e^{-\frac{\ell}{n}}}{\ell+1}}^{\frac1{\ell+1}}|\log(r(\ell+1))|^nr^{n-1}\mathrm dr\\
&\leq C\dfrac{e^{-\ell}}{\ell}+\dfrac{C}{\ell^{n+1}}\int_0^{\frac{\ell}{n}}s^{n}e^{-ns}\mathrm ds\leq \dfrac{C}{\ell^{n+1}}
\end{align*}
for $\ell$ large enough. Given $\alpha\geq n\omega_{n-1}^{\frac1{n-1}}$ and $q\geq0$, by the estimate above, we obtain
\begin{align*}
I_{\alpha,q}(\ell)&:=\dfrac{1}{\|\psi_\ell\|_{L^n(\Sigma)}^n}\int_\Sigma\dfrac{\exp_n\left(\alpha|\psi_\ell|^{\frac{n}{n-1}}\right)}{(1+|\psi_\ell|)^q}\mathrm d\mathcal H^n_g\\
&\geq C\ell^{n+1}\int_{B_{\frac{1}{\ell+1}}(0)}\dfrac{\exp_n\left[\alpha\left(1-\frac{C}{\ell^2}\right)^{\frac{n+2}{2n-2}}|\varphi_\ell|^{\frac{n}{n-1}}\right]}{\left(1+\left(1+\frac{C}{\ell^2}\right)^{\frac{n+2}{2n}}|\varphi_\ell|\right)^q}\mathrm dx.
\end{align*}
Since $\eta\equiv1$ in the ball of radius $e^{-\frac{\ell}{n}}/(\ell+1)$ centered at the origin, we have
\begin{align}
I_{\alpha,q}(\ell)&\geq C\ell^{n+1}\dfrac{\exp_n\left[\frac{\alpha\ell}{n}\omega_{n-1}^{-\frac1{n-1}}\left(1-\frac{C}{\ell^2}\right)^{\frac{n+2}{2n-2}}\right]}{\left(1+\left(1+\frac{C}{\ell^2}\right)^{\frac{n+2}{2n}}\left(\frac{\ell}{n}\right)^{\frac{n-1}{n}}\omega_{n-1}^{-\frac{1}{n}}\right)^q}\int_0^{\frac{e^{-\frac{\ell}{n}}}{\ell+1}}r^{n-1}\mathrm dr\nonumber\\
&\geq C\ell^{n+1-q\frac{n-1}{n}}\exp\left[\frac{\alpha\ell}{n}\omega_{n-1}^{-\frac1{n-1}}\left(1-\frac{C}{\ell^2}\right)^{\frac{n+2}{2n-2}}\right]\dfrac{e^{-\ell}}{(\ell+1)^n}\nonumber\\
&\geq C\ell^{1-q\frac{n-1}n}\exp\left[\ell\left(\dfrac{\alpha}{n}\omega_{n-1}^{-\frac{1}{n-1}}\left(1-\frac{C}{\ell^2}\right)^{\frac{n+2}{2n-2}}-1\right)\right].\label{Ito14}
\end{align}
In particular, if we do not insert $1/\|\psi_\ell\|^n_{L^n(\Sigma)}$ into the calculation, we have
\begin{equation*}
\int_\Sigma\dfrac{\exp_n\left(\alpha|\psi_\ell|^{\frac{n}{n-1}}\right)}{(1+|\psi_\ell|)^q}\mathrm d\mathcal H^n_g\geq C\ell^{-n-q\frac{n-1}n}\exp\left[\ell\left(\dfrac{\alpha}{n}\omega_{n-1}^{-\frac{1}{n-1}}\left(1-\frac{C}{\ell^2}\right)^{\frac{n+2}{2n-2}}-1\right)\right].
\end{equation*}
The limit \eqref{eqsuc2} follows immediately from the above inequality, since the exponential growth dominates any polynomial growth. Moreover, since $1-q\frac{n-1}{n}>0$, the limit \eqref{eqsuc1} also follows from \eqref{Ito14} provided that
\begin{equation}\label{sufftoprove}
\ell\left[\left(1-\frac{C}{\ell^2}\right)^{\frac{n+2}{2n-2}}-1\right]\overset{\ell\to\infty}\longrightarrow0.
\end{equation}
To verify \eqref{sufftoprove}, we use the first-order Taylor expansion of the function $t\mapsto t^{\frac{n+2}{2n-2}}$ around $t=1$:
\begin{equation*}
\left(1-\frac{C}{\ell^2}\right)^{\frac{n+2}{2n-2}}=1-\frac{n+2}{2n-2}\frac{C}{\ell^2}+o\left(\frac1{\ell^2}\right).
\end{equation*}
Thus,
\begin{equation*}
\ell\left[\left(1-\frac{C}{\ell^2}\right)^{\frac{n+2}{2n-2}}-1\right]=-\frac{n+2}{2n-2}\frac{C}{\ell}+o\left(\frac1{\ell}\right)\overset{\ell\to\infty}\longrightarrow0.
\end{equation*}
This proves \eqref{sufftoprove} and completes the proof of the theorem.
\end{proof}

\smallskip

\noindent\textbf{Funding.} A. X. Do and G. Lu were partially supported by grants from the Simons Foundation. N. Lam was partially supported by an NSERC Discovery Grant. R. Ponciano acknowledges partial support from São Paulo Research Foundation (FAPESP) grants 2023/07697-9 and 2025/07027-9.

\end{document}